\numberwithin{equation}{section}
\newtheorem{Theorem}{Theorem}[section]
\newtheorem{Corollary}[Theorem]{Corollary}
\newtheorem{Lemma}[Theorem]{Lemma}
\newtheorem{Proposition}[Theorem]{Proposition}
 { \theoremstyle{definition}
\newtheorem{Remark}[Theorem]{Remark} }
\def\a{\alpha}
\def\b{\beta}
\newcommand{\rr}{\mathbb{R}}
\newcommand{\nn}{\mathbb{N}}
\def\g{\gamma}
\newcommand{\hypergeom}[5]{\mbox{$
_#1 F_#2\left(\left.
\begin{matrix}
\multicolumn{1}{c}{\begin{matrix} #3
\end{matrix}}\\[1mm]
\multicolumn{1}{c}{\begin{matrix} #4
 \end{matrix}}\end{matrix}
 \right| \displaystyle{#5}\right) $} }
\newcommand{\qhypergeom}[5]{\mbox{$
_#1 \phi_#2\left( \left.
\begin{matrix}
\multicolumn{1}{c}{\begin{matrix} #3
\end{matrix}}\\[1mm]
\multicolumn{1}{c}{\begin{matrix} #4
 \end{matrix}}\end{matrix}
 \right| \displaystyle{#5}\right) $} }
\begin{document}

\allowdisplaybreaks

\renewcommand{\thefootnote}{}

\renewcommand{\PaperNumber}{051}

\FirstPageHeading

\ShortArticleName{Quasi-Orthogonality of Some Hypergeometric and $q$-Hypergeometric Polynomials}

\ArticleName{Quasi-Orthogonality of Some Hypergeometric\\ and $\boldsymbol{q}$-Hypergeometric Polynomials\footnote{This paper is a~contribution to the Special Issue on Orthogonal Polynomials, Special Functions and Applications (OPSFA14). The full collection is available at \href{https://www.emis.de/journals/SIGMA/OPSFA2017.html}{https://www.emis.de/journals/SIGMA/OPSFA2017.html}}}

\Author{Daniel D.~TCHEUTIA~$^\dag$, Alta S.~JOOSTE~$^\ddag$ and Wolfram KOEPF~$^\dag$}

\AuthorNameForHeading{D.D.~Tcheutia, A.S.~Jooste and W.~Koepf}

\Address{$^\dag$~Institute of Mathematics, University of Kassel,\\
\hphantom{$^\dag$}~Heinrich-Plett Str.~40, 34132 Kassel, Germany}

\EmailD{\href{mailto:duvtcheutia@yahoo.fr}{duvtcheutia@yahoo.fr}, \href{mailto:koepf@mathematik.uni-kassel.de}{koepf@mathematik.uni-kassel.de}}
\URLaddressD{\url{http://www.mathematik.uni-kassel.de/~koepf/}}

\Address{$^\ddag$~Department of Mathematics and Applied Mathematics, University of Pretoria,\\
\hphantom{$^\ddag$}~Pretoria 0002, South Africa}
\EmailD{\href{mailto:alta.jooste@up.ac.za}{alta.jooste@up.ac.za}}

\ArticleDates{Received January 26, 2018, in final form May 17, 2018; Published online May 23, 2018}

\Abstract{We show how to obtain linear combinations of polynomials in an orthogonal sequence $\{P_n\}_{n\geq 0}$, such as $Q_{n,k}(x)=\sum\limits_{i=0}^k a_{n,i}P_{n-i}(x)$, $a_{n,0}a_{n,k}\neq0$, that characterize quasi-orthogonal polynomials of order $k\le n-1$. The polynomials in the sequence $\{Q_{n,k}\}_{n\geq 0}$ are obtained from $P_{n}$, by making use of parameter shifts. We use an algorithmic approach to find these linear combinations for each family applicable and these equations are used to prove quasi-orthogonality of order~$k$. We also determine the location of the extreme zeros of the quasi-orthogonal polynomials with respect to the end points of the interval of orthogonality of the sequence $\{P_n\}_{n\geq 0}$, where possible.}

\Keywords{classical orthogonal polynomials; quasi-orthogonal polynomials; interlacing of zeros}

\Classification{33C05; 33C45; 33F10; 33D15; 12D10}

\renewcommand{\thefootnote}{\arabic{footnote}}
\setcounter{footnote}{0}

\section{Introduction}
A sequence of polynomials $\{P_n\}_{n\ge0}$, where each polynomial $P_n$ {has} degree $n$, is orthogonal with respect to the weight function $w(x)>0$ on the finite (or infinite) interval $[a,b]$ if
\begin{gather*}
 \int_{a}^{b}x^mP_{n}(x)w(x){\rm d}x=0, \qquad m\in\{0,1,\ldots,n-1\}.
 \end{gather*}
A consequence of orthogonality is that each polynomial $P_n(x)$ has $n$ real, {simple} zeros in $(a,b)$. In order for orthogonality conditions to hold, we often need restrictions on the parameters of the classical orthogonal polynomials and when the parameters deviate from these restricted values in an orderly way, the zeros may depart from the interval of orthogonality in a predictable way. This phenomenon can be explained in terms of the concept of quasi-orthogonality. The sequence of polynomials $\{Q_{n,k}\}_{n\ge0}$, where each polynomial $Q_{n,k}$ {has} degree $n$, is quasi-orthogonal of order $k\in\{1,2,\dots,n-1\}$ with respect to the weight function $w(x)>0$ on $[a,b]$ if
 \begin{gather}\label{quasi_orth}
 \int_{a}^{b}x^mQ_{n,k}(x)w(x){\rm d}x=0, \qquad m\in\{0,1,\ldots,n-k-1\}.
 \end{gather}

It is clear that when $k=0$ in \eqref{quasi_orth}, the sequence $\{Q_{n,k}\}_{n\geq 0}$ is orthogonal with respect to~$w(x)$ on~$[a,b]$.

Quasi-orthogonality was first studied by Riesz \cite{riesz}, followed by Fej\'{e}r \cite{fejer}, Shohat \cite{Shohat}, Chihara \cite{Chihara}, Dickinson \cite{Dickinson}, Draux \cite{Draux_1990}, Maroni \cite{maroni} and Joulak \cite{Joulak_2005}. The quasi-orthogonality of Jacobi, Gegenbauer and Laguerre sequences is discussed in \cite{BDR}, the quasi-orthogonality of Meixner sequences in \cite{JJT} and of Meixner--Pollaczek, Hahn, dual Hahn and continuous dual Hahn sequences in \cite{JJJ}. More recently, interlacing of zeros of quasi-orthogonal Meixner, Jacobi, Laguerre and Gegenbauer polynomials were studied in \cite{DriverJooste, DriverJordaan,DriverMuldoonLag, DriverMuldoonGeg} and in \cite{bultheel} interlacing properties of zeros of quasi-orthogonal polynomials were used to prove results on Gaussian-type quadrature.

Quasi-orthogonal polynomials are characterized by the following property:
\begin{Lemma}[\cite{BDR, Chihara}] \label{Brez}
 Let $\{P_n\}_{n\geq 0}$ be a family of orthogonal polynomials on $[a,b]$ with respect to the weight function $w(x)>0$. A necessary and sufficient condition for a polynomial $Q_{n,k}$ of degree $n$ to be quasi-orthogonal of order $k\le n-1$ with respect to $w$ on $[a,b]$, is that
 \begin{gather}\label{charac_quasi}
 Q_{n,k}(x)=\sum_{i=0}^k a_{n,i}P_{n-i}(x),\qquad a_{n,0}a_{n,k}\neq 0, \qquad n>k.
 \end{gather}
\end{Lemma}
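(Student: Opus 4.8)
The plan is to exploit the fact that, since each $P_n$ has exact degree $n$, the set $\{P_0,P_1,\dots,P_n\}$ is a basis for the space of polynomials of degree at most $n$. Hence every polynomial $Q_{n,k}$ of degree $n$ admits a unique expansion $Q_{n,k}(x)=\sum_{i=0}^{n}a_{n,i}P_{n-i}(x)$, in which matching leading coefficients forces $a_{n,0}\neq0$. With this expansion in hand, the whole statement reduces to showing that the quasi-orthogonality conditions \eqref{quasi_orth} are equivalent to the vanishing of the tail coefficients, $a_{n,i}=0$ for $i>k$, together with $a_{n,k}\neq0$.

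For the necessity, I would first note that $\{x^m:0\le m\le n-k-1\}$ and $\{P_j:0\le j\le n-k-1\}$ span the same space, so \eqref{quasi_orth} holds if and only if $\int_a^b P_j(x)Q_{n,k}(x)w(x)\,{\rm d}x=0$ for every $j\in\{0,\dots,n-k-1\}$. Writing $h_j=\int_a^b P_j^2(x)w(x)\,{\rm d}x>0$ and substituting the expansion, the orthogonality relation $\int_a^b P_j(x) P_{n-i}(x)w(x)\,{\rm d}x=h_j\delta_{i,\,n-j}$ collapses each such integral to the single surviving term $a_{n,n-j}h_j$. Requiring these to vanish for $j=0,\dots,n-k-1$, and using $h_j>0$, forces $a_{n,i}=0$ for $i=k+1,\dots,n$, which leaves exactly $Q_{n,k}(x)=\sum_{i=0}^{k}a_{n,i}P_{n-i}(x)$.

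The sufficiency runs the same computation in reverse: given the $k$-term expansion, for any $m\le n-k-1$ one has $m<n-i$ for every $i\le k$, so orthogonality of $P_{n-i}$ against lower-degree polynomials yields $\int_a^b x^m P_{n-i}(x)w(x)\,{\rm d}x=0$ term by term, establishing \eqref{quasi_orth}. The point that needs the most care---and the only place where the condition $a_{n,k}\neq0$ is actually used---is verifying that the order is exactly $k$ rather than smaller. Here I would evaluate $\int_a^b x^{n-k}Q_{n,k}(x)w(x)\,{\rm d}x$: every term with $i<k$ again vanishes because $n-k<n-i$, while the $i=k$ term survives and equals $a_{n,k}$ times a nonzero multiple of $h_{n-k}$. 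Thus this integral is nonzero precisely when $a_{n,k}\neq0$, which is what distinguishes genuine order $k$ from quasi-orthogonality of lower order. No serious obstacle arises; the argument is essentially the interplay between the monomial and orthogonal bases, and the main thing to keep straight is the index bookkeeping in the two equivalent formulations of \eqref{quasi_orth}.
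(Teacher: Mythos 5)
Your proof is correct. There is nothing in the paper to compare it against: the paper states this lemma with citations to \cite{BDR,Chihara} and gives no proof of its own, and your argument is essentially the standard one from those references --- expand $Q_{n,k}$ in the basis $\{P_0,P_1,\dots,P_n\}$ (legitimate since $P_j$ has exact degree $j$), note that the moment conditions against $x^m$, $m\le n-k-1$, are equivalent to orthogonality against $P_0,\dots,P_{n-k-1}$, and collapse each integral via $\int_a^b P_jP_{n-i}w\,{\rm d}x=h_j\delta_{i,n-j}$ to read off that exactly the tail coefficients $a_{n,i}$, $i>k$, must vanish; the index bookkeeping in both directions is right. One point you handled that deserves emphasis: the necessity of $a_{n,k}\neq0$ (as opposed to merely $a_{n,0}\neq0$) genuinely requires that ``quasi-orthogonal of order exactly $k$'' include the non-vanishing of $\int_a^b x^{n-k}Q_{n,k}(x)w(x)\,{\rm d}x$. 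The paper's displayed definition \eqref{quasi_orth} lists only the vanishing conditions, under which $P_n$ itself would satisfy the conditions for ``order $k$'' while having $a_{n,k}=0$, so the equivalence would fail as literally stated; the non-vanishing clause appears only in the paper's later discrete definition and in the definitions used in \cite{BDR} and \cite{Chihara}. Your final paragraph supplies exactly this clause and shows the $m=n-k$ moment equals $a_{n,k}$ times a nonzero multiple of $h_{n-k}$, which is precisely what makes the characterization an equivalence, so the argument is complete.
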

We show at the end of this section how to derive equations of type \eqref{charac_quasi}, where $\{P_n\}_{n\ge0}$ is a sequence of orthogonal polynomials in the Askey or $q$-Askey scheme~\cite{KLS}. The polynomials in the Askey scheme are defined in terms of the generalized hypergeometric series
\begin{gather*}\hypergeom{p}{q}{a_1,\ldots,a_p}{b_1,\ldots, b_q}{x}=\sum_{m=0}^{\infty}\frac{(a_1)_m\cdots (a_p)_m}{(b_1)_m\cdots (b_q)_m}\frac{x^m}{m!},\end{gather*}
and $(a)_m$ denotes the Pochhammer symbol (or shifted
factorial) defined by \begin{gather*}(a)_m=
 \begin{cases}
 1 & \text{if } m=0, \\
 a(a+1)(a+2)\cdots (a+m-1) & \text{if } m\in \mathbb{N}.
 \end{cases}
\end{gather*}
The polynomials in the $q$-Askey scheme are defined in terms of the basic hypergeometric series~$_r\phi_s$, given by
 \begin{gather*}
 \qhypergeom{r}{s}{a_1,\ldots,a_r}{b_1,\ldots,b_s}{q;z}=
 \sum_{m=0}^\infty\frac{(a_1,\ldots,a_r;q)_m}{(b_1,\ldots,b_s;q)_m}\big((-1)^m
 q^{\binom{m}{2}}\big)^{1+s-r}\frac{z^m}{(q;q)_m},
 \end{gather*}
 where the $q$-Pochhammer symbol $(a_1,a_2,\ldots,a_r;q)_n$ is defined by \begin{gather*}(a_1,\ldots,a_r;q)_m:=(a_1;q)_m \cdots (a_r;q)_m,
 \end{gather*}
 with
 \begin{gather*}
 (a_i;q)_m=
 \begin{cases}
 \prod\limits_{j=0}^{m-1}\big(1-a_iq^j\big)& \text{if} \ m\in\{1,2,3,\ldots\}, \\
 1 & \text{if} \ m=0.
 \end{cases}
 \end{gather*}

We will discuss the quasi-orthogonality of monic polynomial systems on a $q$-linear lattice in Section~\ref{section2}, the quasi-orthogonality of the monic Askey--Wilson and $q$-Racah polynomials, that lie on a $q$-quadratic lattice, in Section~\ref{section3}, the quasi-orthogonality of the monic Wilson, Racah and continuous Hahn polynomials in Section~\ref{section4} and in Section~\ref{section5} we will make some concluding remarks. Throughout this paper, $x_{n,j}$, $j\in\{1,2,\dots,n\}$, denote the zeros of a polynomial of degree~$n$ in the following order: $x_{n,1}<x_{n,2}<\cdots<x_{n,n}$. The following results will be referred to in this paper and we state them here for the convenience of the reader.

\begin{Lemma}[\cite{BDR, Shohat}]\label{location}
If $Q_{n,k}$ is quasi-orthogonal of order $k\geq 1$ on $[a,b]$ with respect to $w(x)>0$, then at least $(n-k)$ real, distinct zeros of $Q_{n,k}$ lie in the interval $(a,b)$.
\end{Lemma}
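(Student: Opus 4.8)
The plan is to use the classical sign-change (or "comparison polynomial") argument, exploiting the defining relation \eqref{quasi_orth}, which says precisely that $Q_{n,k}$ is orthogonal, with respect to $w$ on $[a,b]$, to every polynomial of degree at most $n-k-1$.

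First I would count the points in the open interval $(a,b)$ at which $Q_{n,k}$ actually changes sign. Let $x_1,\dots,x_r$ denote these points, i.e., the zeros of $Q_{n,k}$ in $(a,b)$ of odd multiplicity; since $Q_{n,k}$ has real coefficients and degree $n$, this set is finite. The goal is to show $r\ge n-k$, as this yields at least $n-k$ distinct real zeros of $Q_{n,k}$ in $(a,b)$.

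Next I would argue by contradiction: suppose $r\le n-k-1$ and form the comparison polynomial $\pi(x)=\prod_{j=1}^{r}(x-x_j)$, interpreted as the empty product $\pi\equiv 1$ when $r=0$. By construction $\pi$ has a simple zero at each $x_j$ and no other zeros in $(a,b)$, so $\pi$ changes sign at exactly the same points as $Q_{n,k}$. Consequently the product $Q_{n,k}(x)\pi(x)$ keeps a constant sign throughout $(a,b)$ and is not identically zero, whence, since $w(x)>0$, the integral $\int_a^b Q_{n,k}(x)\pi(x)w(x){\rm d}x$ is strictly nonzero. On the other hand $\deg\pi=r\le n-k-1$, so $\pi$ is a linear combination of the monomials $x^m$ with $m\in\{0,1,\dots,n-k-1\}$, and \eqref{quasi_orth} forces $\int_a^b Q_{n,k}(x)\pi(x)w(x){\rm d}x=0$. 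This contradiction establishes $r\ge n-k$.

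The step I expect to require the most care is the sign-change bookkeeping: one must verify that $Q_{n,k}\pi$ genuinely has constant sign on $(a,b)$, which rests on $\pi$ crossing zero precisely at each odd-multiplicity zero of $Q_{n,k}$ there, so that the two factors flip sign together at every crossing. The degenerate case $r=0$, where $\pi\equiv 1$ and the contradiction comes from orthogonality against constants, must be included explicitly; it is available because $k\le n-1$ guarantees $n-k-1\ge 0$.
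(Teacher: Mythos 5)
Your proof is correct. Note that the paper does not prove Lemma~\ref{location} at all---it is quoted as a known result from \cite{BDR,Shohat}---and your contradiction argument (integrating $Q_{n,k}$ against the polynomial $\pi$ vanishing at its sign changes, with degree bound $r\le n-k-1$ supplied by \eqref{quasi_orth}) is precisely the classical proof found in those references, including the correct handling of the case $r=0$ via $k\le n-1$.
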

\begin{Lemma}[\cite{BDR, Joulak_2005}]\label{Joulak1}
 Suppose $Q_{n,1}(x)=P_n(x)+a_{n}P_{n-1}(x)$, $a_{n}\neq 0$. Let $y_{n,j}$, $j\in\{1,2,\dots,n\}$, be the zeros of $Q_{n,1}(x)$ and let $f_n(x)=\frac{P_n(x)}{P_{n-1}(x)}$. We have
 \begin{enumerate}\itemsep=0pt
 \item[$(i)$] $y_{n,1}<a$ if and only if $-a_{n}<f_n(a)<0$;
 \item[$(ii)$] $b<y_{n,n}$ if and only if $-a_{n}>f_n(b)>0$;
 \item[$(iii)$] $Q_{n,1}$ has all its zeros in $(a,b)$ if and only if $f_n(a)<-a_{n}<f_n(b)$.
 \end{enumerate}
\end{Lemma}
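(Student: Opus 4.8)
The plan is to reduce everything to the strict monotonicity of the rational function $f_n=P_n/P_{n-1}$ together with the interlacing of the zeros of $P_n$ and $P_{n-1}$. Writing $x_{n-1,1}<\cdots<x_{n-1,n-1}$ for the zeros of $P_{n-1}$ (the poles of $f_n$), the first step is to record that these strictly interlace the zeros of $P_n$, and that $a<x_{n,1}<x_{n-1,1}$ and $x_{n-1,n-1}<x_{n,n}<b$, since all zeros of the orthogonal polynomials lie in $(a,b)$.

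The second, and principal, step is to show that $f_n$ is strictly increasing on every interval on which it is defined. The tool is the confluent Christoffel--Darboux formula: if $h_k=\int_a^b P_k^2\,w>0$, then $P_n'(x)P_{n-1}(x)-P_{n-1}'(x)P_n(x)=h_{n-1}\sum_{k=0}^{n-1}P_k(x)^2/h_k>0$ for all real $x$. Dividing by $P_{n-1}(x)^2$ gives $f_n'(x)>0$ wherever $P_{n-1}(x)\neq 0$. Since $f_n$ is increasing and has a pole at each endpoint of a bounded interval $(x_{n-1,j},x_{n-1,j+1})$, it must increase from $-\infty$ to $+\infty$ there; on the two unbounded intervals the same conclusion follows from $f_n(x)\sim x$ as $x\to\pm\infty$ (the polynomials being monic). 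Consequently the equation $f_n(y)=-a_n$, equivalent to $Q_{n,1}(y)=0$ away from the poles, has exactly one solution in each of the $n$ intervals cut out by the poles, so that $y_{n,1}<\cdots<y_{n,n}$ with $y_{n,1}\in(-\infty,x_{n-1,1})$, $y_{n,n}\in(x_{n-1,n-1},+\infty)$, and the remaining zeros trapped in $(x_{n-1,1},x_{n-1,n-1})\subset(a,b)$.

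The third step reads off $(i)$--$(iii)$. On the leftmost interval $f_n$ is strictly increasing with its only zero at $x_{n,1}>a$, so $f_n(a)<0$ holds automatically; since $a$ and $y_{n,1}$ both lie in this interval, $y_{n,1}<a\Longleftrightarrow f_n(y_{n,1})<f_n(a)\Longleftrightarrow -a_n<f_n(a)$, which together with $f_n(a)<0$ gives $(i)$. Statement $(ii)$ is the mirror image on the rightmost interval, where $f_n(b)>0$ is automatic because $x_{n,n}<b$. For $(iii)$, the interior zeros $y_{n,2},\ldots,y_{n,n-1}$ already lie in $(a,b)$, so all zeros lie in $(a,b)$ if and only if $y_{n,1}>a$ and $y_{n,n}<b$; negating the monotonicity comparisons of $(i)$ and $(ii)$ turns these into $f_n(a)<-a_n$ and $-a_n<f_n(b)$, that is, $f_n(a)<-a_n<f_n(b)$.

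The main obstacle is the monotonicity of $f_n$ established in the second step; everything else is a direct consequence of strict monotonicity on each interval between consecutive poles. One must also take care with the signs of the limits of $f_n$ at the poles and at $\pm\infty$, and verify that $a$ and $b$ fall in the extreme intervals (guaranteed by $a<x_{n-1,1}$ and $x_{n-1,n-1}<b$), so that the comparisons of function values legitimately translate into comparisons of the arguments.
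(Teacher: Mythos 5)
Your proof is correct, but note first that there is nothing in the paper to compare it against: Lemma~\ref{Joulak1} is quoted as a known result, with its proof delegated to \cite{BDR} and \cite{Joulak_2005}, so you have supplied in full an argument the paper only cites. On its own terms the argument is sound. The confluent Christoffel--Darboux identity $P_n'(x)P_{n-1}(x)-P_{n-1}'(x)P_n(x)=h_{n-1}\sum_{k=0}^{n-1}P_k(x)^2/h_k>0$ does hold for monic orthogonal polynomials, so $f_n'>0$ off the zeros of $P_{n-1}$; strict monotonicity forces $f_n\to+\infty$ to the left of each pole and $f_n\to-\infty$ to its right, and together with $f_n(x)\sim x$ as $x\to\pm\infty$ this makes $f_n$ a strictly increasing bijection of each of the $n$ maximal intervals onto $\mathbb{R}$. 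Since interlacing prevents $P_n$ and $P_{n-1}$ from sharing a zero, the zeros of $Q_{n,1}$ are exactly the $n$ solutions of $f_n(y)=-a_n$, one per interval; in passing this shows all zeros of $Q_{n,1}$ are real and simple, a fact the lemma's statement tacitly assumes. The endpoint signs $f_n(a)<0<f_n(b)$, which you correctly observe are automatic because all zeros of $P_n$ and $P_{n-1}$ lie in $(a,b)$, and the monotone comparison of $a$ with $y_{n,1}$ (both in the leftmost interval) and of $b$ with $y_{n,n}$ (both in the rightmost interval) then yield $(i)$, $(ii)$, and --- since strict monotonicity converts strict inequalities faithfully --- $(iii)$, exactly as you argue. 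Two remarks: you invoke monicity (for the asymptotics $f_n(x)\sim x$ and for $\deg Q_{n,1}=n$), which is not stated as a hypothesis of the lemma but is the standing normalization of the paper and of the sources \cite{BDR, Joulak_2005}, so it is harmless, though worth flagging explicitly; and your mechanism --- interlacing plus global monotonicity of the ratio $P_n/P_{n-1}$ between its poles --- is essentially the one underlying the proofs in the cited references, so your write-up should be read as a self-contained reconstruction rather than a genuinely different route.
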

\begin{Lemma}[\cite{BDR, Joulak_2005}] \label{Joulak2}
 Suppose $Q_{n,1}(x)=P_n(x)+a_{n}P_{n-1}(x)$, $a_{n}\neq 0$. Let $x_{n,j}$, $j\in\{1,2,\dots,n\}$, denote the zeros of $P_n(x)$ and $y_{n,j}$, $j\in\{1,2,\dots,n\}$, the zeros of $Q_{n,1}(x)$. Then
 \begin{enumerate}\itemsep=0pt
 \item[$(i)$] $a_n<0$ if and only if $x_{n,1}<y_{n,1}<x_{n-1,1}<x_{n,2}<y_{n,2}<\dots<x_{n-1,n-1}<x_{n,n}<y_{n,n}$;
 \item[$(ii)$] $a_n>0$ if and only if $y_{n,1}<x_{n,1}<x_{n-1,1}<y_{n,2}<x_{n,2}<\dots<x_{n-1,n-1}<y_{n,n}<x_{n,n}$.
 \end{enumerate}
\end{Lemma}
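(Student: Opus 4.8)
The plan is to study the auxiliary rational function $f_n(x)=P_n(x)/P_{n-1}(x)$ already introduced in Lemma~\ref{Joulak1}, and to realise the zeros $y_{n,j}$ of $Q_{n,1}$ as the solutions of $f_n(x)=-a_n$. First I would note that $Q_{n,1}$ and $P_{n-1}$ have no common zero: if $P_{n-1}(\xi)=0=Q_{n,1}(\xi)$, then $Q_{n,1}(\xi)=P_n(\xi)+a_n\cdot0=P_n(\xi)$ forces $P_n(\xi)=0$, contradicting the strict interlacing (hence coprimality) of the consecutive orthogonal polynomials $P_n$ and $P_{n-1}$. Consequently, away from the poles of $f_n$ (the zeros of $P_{n-1}$) one has $Q_{n,1}(x)=0$ precisely when $f_n(x)=-a_n$, so the $y_{n,j}$ are exactly the level-$(-a_n)$ points of $f_n$.

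The crux is the monotonicity of $f_n$. I would differentiate, obtaining $f_n'(x)=[P_n'(x)P_{n-1}(x)-P_n(x)P_{n-1}'(x)]/P_{n-1}(x)^2$, and control the sign of the Wronskian in the numerator. The confluent Christoffel--Darboux formula expresses this Wronskian as a positive multiple of a sum of squares $\sum_{j=0}^{n-1}c_jP_j(x)^2$ with all $c_j>0$, hence it is strictly positive for every real $x$; therefore $f_n'>0$ wherever defined, and $f_n$ is strictly increasing on each interval between consecutive poles. Combining this with $f_n\to\pm\infty$ at each pole and, since $P_n$ and $P_{n-1}$ are monic, $f_n(x)\to\pm\infty$ as $x\to\pm\infty$, one sees that on each of the $n$ intervals cut out by the $n-1$ zeros of $P_{n-1}$ (including the two unbounded ones) $f_n$ increases continuously from $-\infty$ to $+\infty$. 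Hence $f_n(x)=-a_n$ has exactly one solution in each such interval, which simultaneously recovers that $Q_{n,1}$ has $n$ simple real zeros and shows that precisely one $y_{n,j}$ lies in each gap determined by the zeros of $P_{n-1}$.

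Once monotonicity is in hand, the two chains of inequalities are pure bookkeeping. On each interval the unique zero of $P_n$ (guaranteed by the interlacing $x_{n,1}<x_{n-1,1}<x_{n,2}<\cdots<x_{n-1,n-1}<x_{n,n}$) is the point where $f_n=0$. Since $f_n$ is increasing, the level point $f_n=-a_n$ lies to the right of this zero when $-a_n>0$, i.e.\ when $a_n<0$, and to its left when $a_n>0$. Reading this off on every interval, with the zeros of $P_{n-1}$ serving as separators interlaced by the zeros of $P_n$, yields precisely the ordering in $(i)$ when $a_n<0$ and in $(ii)$ when $a_n>0$. Finally, since $a_n<0$ and $a_n>0$ exhaust the two mutually exclusive possibilities and produce two mutually exclusive orderings, each forward implication also runs backwards, giving the stated equivalences.

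The main obstacle is the strict monotonicity of $f_n$, that is, the global positivity of the Wronskian $P_n'P_{n-1}-P_nP_{n-1}'$; everything afterwards is an application of the intermediate value theorem together with careful index tracking. I would therefore isolate the confluent Christoffel--Darboux identity (or quote it from the standard theory, e.g.\ \cite{Chihara}) so that the monotonicity step is self-contained and the remaining sign analysis reads cleanly.
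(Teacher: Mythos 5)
Your proof is correct. Note first that the paper itself gives no proof of Lemma~\ref{Joulak2}: it is quoted as a known result from the references \cite{BDR, Joulak_2005}, so there is no internal argument to compare against; what you have written is essentially the classical proof that underlies those references. All the key steps are sound: the coprimality of $Q_{n,1}$ and $P_{n-1}$ (so that the zeros $y_{n,j}$ are exactly the solutions of $f_n(x)=-a_n$), the strict positivity of the Wronskian $P_n'P_{n-1}-P_nP_{n-1}'$ via the confluent Christoffel--Darboux identity (the sum $\sum_{j=0}^{n-1}c_jP_j(x)^2$ is bounded below by the constant term $c_0>0$, so it never vanishes), the intermediate-value bookkeeping on the $n$ intervals cut out by the zeros of $P_{n-1}$, and the observation that the two orderings are mutually exclusive while $a_n<0$, $a_n>0$ are exhaustive, which upgrades the two forward implications to equivalences. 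One small point worth tightening: you state that $f_n\to\pm\infty$ at each pole as if it were an independent fact, but the one-sided limits ($+\infty$ from the left, $-\infty$ from the right) are themselves forced by the monotonicity you just proved, since a simple pole with positive residue would make $f_n$ decreasing near it; a sentence making this explicit would close the only loose seam. As a side benefit, your argument re-derives the interlacing of the zeros of $P_n$ and $P_{n-1}$ (the level set $f_n=0$) rather than assuming it, so the proof is self-contained given only simplicity of zeros and the Christoffel--Darboux formula, e.g.\ from \cite{chiharaboek}.
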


In order to find the equations of type \eqref{charac_quasi}, that are needed to prove quasi-orthogonality, we consider
the structure relation (cf.\ \cite{Foupouagnigni_et_al_2012,Koepf_Schmersau_1998,Medem_et_al_2001})
\begin{gather}\label{struc_rel}
 P_n(x)=a_nDP_{n+1}(x)+b_nDP_n(x)+c_nDP_{n-1}(x),
\end{gather}
where the constants $a_n$, $b_n$ and $c_n$ are explicitly given and $D$ is a derivative or difference operator. Most of the classical orthogonal polynomials we consider in this paper (see \cite[Chapters~9 and~14]{KLS}) satisfy
\begin{gather}\label{qdiff}
 DP_n(x)=S(n)P_{n-1,k}(x), \qquad k\in\{-1,0,1,2\},
\end{gather}
where $S(n)$ does not depend on $x$ and $P_{n-1,k}(x)$ denotes the polynomial obtained when each of the parameters on which the polynomial $P_n(x)$ depends, can be shifted by $k$ units in the case of the classical systems, or, in the case of the $q$-classical systems, when the parameters can each be multiplied by $q^k$. If we substitute \eqref{qdiff} in \eqref{struc_rel}, we obtain
 \begin{gather*}
 P_n(x)=a_nS(n+1)P_{n,k}(x)+b_nS(n)P_{n-1,k}(x)+c_nS(n-1)P_{n-2,k}(x)
 \end{gather*}
or, by making a parameter shift,
\begin{gather*}
 P_{n,-k}(x)=a_n'S'(n+1)P_{n}(x)+b_n'S'(n)P_{n-1}(x)+c_n'S'(n-1)P_{n-2}(x),
 \end{gather*}
where $a_n'$, $b_n'$, $c_n'$ and $S'(n)$ are the values of the coefficients taking into consideration the parameter shift, and we have a linear combination of polynomials in an orthogonal sequence as in~(\ref{charac_quasi}). For the so-called very-classical orthogonal polynomials, the general expression for the parameters $a_{n,i}$, $i\in\{0,1,2\}$, in
\begin{gather*}Q_{n,2}(x)=P_{n}(x)+a_{n,1}P_{n-1}(x)+a_{n,2}P_{n-2}(x)\end{gather*}
 were given in \cite[equation~(76)]{MP} in terms of the coefficients of the differential equations they satisfy.

We can also apply the operator $D$ to \eqref{struc_rel} to obtain
\begin{gather}\label{eq_dqpn}
 DP_n(x)=a_nD^2P_{n+1}(x)+b_nD^2P_n(x)+c_nD^2P_{n-1}(x).
\end{gather}
Replacing \eqref{eq_dqpn} in \eqref{struc_rel} and using \eqref{qdiff} twice, yields
\begin{gather*}
 P_n(x)=a_na_{n+1}S(n+2)S(n+1)P_{n,2k}(x)+a_n(b_n+b_{n+1})S(n+1)S(n) P_{n-1,2k}(x) \\
\hphantom{P_n(x)=}{} +\big(a_nc_{n+1}+a_{n-1}c_n+b_n^2\big)S(n)S(n-1)P_{n-2,2k}(x) \\
\hphantom{P_n(x)=}{}+ c_n(b_n+b_{n-1})S(n-1) S(n-2)P_{n-3,2k}+c_nc_{n-1}S(n-2)S(n-3)P_{n-4,2k}(x).
\end{gather*}
By making a parameter shift again, we obtain
\begin{gather*}
 P_{n,-2k}(x)=a_n'a_{n+1}'S'(n+2)S'(n+1)P_{n}(x)+a_n'(b_n'+b_{n+1}')S'(n+1)S'(n) P_{n-1}(x)\\
\hphantom{P_{n,-2k}(x)=}{} +\big(a_n'c_{n+1}'+a_{n-1}'c_n'+(b_n')^2\big)S'(n)S'(n-1)P_{n-2}(x) \\
\hphantom{P_{n,-2k}(x)=}{} +c_n'(b_n'+b_{n-1}')S'(n-1) S'(n-2)P_{n-3}+c_n'c_{n-1}'S'(n-2)S'(n-3)P_{n-4}(x).
\end{gather*}

These induction arguments show that equations of type~\eqref{charac_quasi} are structurally valid. We also refer the reader to~\cite{Foupouagnigni_et_al_2012}, where we have so-called connection formulae for the classical orthogonal polynomial of the $q$-linear lattice from which one can deduce equations of type~\eqref{charac_quasi}.

Using Zeilberger's algorithm and following the approach in \cite{Chen_et_al_2012, Koepf_2014}, we wrote, using the computer algebra system Maple, a procedure denoted \texttt{MixRec}($F,k,S(n),\alpha,\beta,m$) which finds a~recurrence equation of the form
\begin{gather*}S(n,\alpha+m,\beta+m)=\sum_{j=0}^J\sigma_jS(n-j,\alpha,\beta ),\qquad J\in\{1,2,\ldots\},\end{gather*}
where $S(n,\alpha,\beta)= \sum\limits_{k=-\infty}^\infty F$, $F$ is a function of $k$, $n$, $\alpha$ and $\beta$ (of hypergeometric type), and $m$ is an integer, and its $q$-analogue denoted \texttt{qMixRec}($F,q,k,S(n),\alpha,\beta,m$) which finds a recurrence equation of the form
\begin{gather*}S\big(n,\alpha q^m,\beta q^m\big)=\sum_{j=0}^J\sigma_jS(n-j,\alpha,\beta ),\qquad J\in\{1,2,\ldots\}.\end{gather*}

Our algorithms and all the equations used in this paper can be downloaded from \url{http://www.mathematik.uni-kassel.de/~koepf/Publikationen}. In~\cite{KoepfJT_2017}, using the same algorithmic approach, the authors provide a procedure to find mixed recurrence equations satisfied by classical $q$-orthogonal polynomials with shifted parameters. These equations were used to investigate interlacing properties of zeros of sequences of $q$-orthogonal polynomials. Using our~\texttt{MixRec} function for example, we recover (as shown at the end of our Maple file) the equations used in \cite[Section~3]{BDR} to study the quasi-orthogonality of the Gegenbauer, the Laguerre, the Jacobi polynomials.

\section[Classical orthogonal polynomials on a $q$-linear lattice]{Classical orthogonal polynomials on a $\boldsymbol{q}$-linear lattice}\label{section2}

In this section we consider the quasi-orthogonality of sequences of monic orthogonal polynomials that are defined on a $q$-linear lattice, as well as the location of the zeros of the quasi-orthogonal sequences.

\subsection[The big $q$-Jacobi polynomials]{The big $\boldsymbol{q}$-Jacobi polynomials}
Big $q$-Jacobi polynomials
 \begin{gather} \label{eq:BigqJacobi}
 \tilde{P}_n(x;\alpha,\beta,\gamma;q)=\frac{(\alpha q;q)_n(\gamma q;q)_n}{(\alpha\beta q q^n;q)_n}\,\qhypergeom{3}{2}{q^{-n},\alpha\beta q^{n+1},x}{\alpha q,\gamma q}{q;q}
 \end{gather}
are orthogonal for $0<\alpha q<1$, $0\leq \beta q<1$ and $\gamma<0$ with respect to the continuous weight function $w(x)=\frac{(\a^{-1}x,\g^{-1}x;q)_{\infty}}{(x,\b\g^{-1}x;q)_{\infty}}$ on $(\g q,\a q)$.

The first two recurrence equations in the following proposition follow from \cite[equations~(7a) and~(7b)]{KoepfJT_2017}, with $\a$ and $\b$ replaced by $\frac{\a}{q}$ and $\frac{\b}{q}$, respectively. The big $q$-Jacobi polynomials are orthogonal for $\g<0$, and by replacing $\g$ by $\frac{\g}{q}$, $0<q<1$, we obtain the polynomial $\tilde{P}_n\big(x;\alpha,\beta,\frac{\gamma}{q};q\big)$ of which all the parameters are still in the regions where orthogonality is guaranteed and we will therefore not consider a~$q$-shift of~$\gamma$.
\begin{Proposition}
\begin{subequations}
\begin{gather}
\tilde{P}_n\left(x;\frac{\alpha}{q},\beta,\gamma;q\right) =\tilde{P}_n(x;\alpha,\beta,\gamma;q)\nonumber\\
\hphantom{\tilde{P}_n\left(x;\frac{\alpha}{q},\beta,\gamma;q\right) =}{} +\frac { \alpha q\left( {q}^{n}-1 \right) \left( \beta {q}^{n}-1 \right) \left( \gamma {q}^{n}-1 \right) }{ \left( \alpha \beta {q}^{2 n}-1 \right) \left( \alpha
 \beta {q}^{2 n}-q \right) }\tilde{P}_{n-1}(x;\alpha,\beta,\gamma;q);\label{eq1bqj}\\
 \tilde{P}_n\left(x;\alpha,\frac{\beta}{q},\gamma;q\right) =\tilde{P}_n(x;\alpha,\beta,\gamma;q) \nonumber\\
 \hphantom{\tilde{P}_n\left(x;\alpha,\frac{\beta}{q},\gamma;q\right) =}{} -\frac { \alpha\beta{q}^{n+1}\left( {q}^{n}-1 \right) \left( \alpha q^n-1 \right)\left( \gamma q^n-1 \right) }{ \left( \alpha \beta {q}^{2 n}-1 \right)
 \left( \alpha \beta {q}^{2 n}-q \right) }\tilde{P}_{n-1}(x;\alpha,\beta,\gamma;q);\label{eq3bqj}\\
 \tilde{P}_n\left(x;\alpha,\frac{\beta}{q},\frac{\gamma}{q};q\right) =\tilde{P}_n(x;\alpha,\beta,\gamma;q)\nonumber\\
 \hphantom{\tilde{P}_n\left(x;\alpha,\frac{\beta}{q},\frac{\gamma}{q};q\right) =}{} -\frac { \left( {q}^{n}-1 \right) \left( \alpha {q}^{n}-1 \right) \left( -\alpha \beta {q}^{n}+\gamma \right) }{ \left( \alpha \beta {q}^{2 n}-1 \right) \left( \alpha \beta {q}^{2 n-1}-1 \right) } \tilde{P}_{n-1}(x;\alpha,\beta,\gamma;q).\label{eq5bqj}
\end{gather}
\end{subequations}
\end{Proposition}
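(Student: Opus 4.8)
The plan is to handle all three identities through the explicit representation \eqref{eq:BigqJacobi}, isolating the one genuinely new computation. Relations \eqref{eq1bqj} and \eqref{eq3bqj} are, as the paragraph before the statement records, equations (7a) and (7b) of \cite{KoepfJT_2017} after the replacements $\a\mapsto\a/q$ and $\b\mapsto\b/q$; for these I would only carry out those substitutions in the quoted coefficients and simplify to the displayed rational functions in $q^n$. The substantive claim is \eqref{eq5bqj}, the simultaneous shift $\b\mapsto\b/q$, $\g\mapsto\g/q$, and this is where I would concentrate.

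First I would substitute \eqref{eq:BigqJacobi} into both sides of \eqref{eq5bqj}. Each polynomial is a terminating sum over $m\in\{0,\dots,n\}$ in which $x$ enters only through the factor $(x;q)_m$, so both sides are polynomials of degree $n$ in $x$; inspecting the $m=n$ term shows that every $\tilde P_n$ is monic. Hence the coefficient of $\tilde P_n(x;\a,\b,\g;q)$ on the right of \eqref{eq5bqj} is forced to be $1$, and it remains to identify the multiple of $\tilde P_{n-1}(x;\a,\b,\g;q)$. Under the combined shift the series in \eqref{eq:BigqJacobi} has numerator parameter $\a\b q^{n+1}$ replaced by $\a\b q^{n}$ and denominator parameter $\g q$ replaced by $\g$, while the prefactor changes from $\frac{(\a q;q)_n(\g q;q)_n}{(\a\b q^{n+1};q)_n}$ to $\frac{(\a q;q)_n(\g;q)_n}{(\a\b q^{n};q)_n}$. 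Note that the shifted $\tilde P_n$ and the target $\tilde P_{n-1}(x;\a,\b,\g;q)$ already share the numerator parameter $\a\b q^{n}$, which is what makes a short relation plausible.

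To pin the coefficient down I would apply a contiguous relation for the ${}_3\phi_2$ that converts the pair (numerator $\a\b q^{n}$, denominator $\g$) into the unshifted pair (numerator $\a\b q^{n+1}$, denominator $\g q$) together with the neighbour in which the terminating factor $q^{-n}$ is replaced by $q^{-(n-1)}$, the latter being precisely what manufactures $\tilde P_{n-1}$. In practice, rather than search the literature for the exact classical relation, I would feed the summand of \eqref{eq:BigqJacobi} to the $q$-analogue of Zeilberger's algorithm through the \texttt{qMixRec} procedure described above; it outputs a mixed recurrence expressing the parameter-shifted polynomial in terms of $\tilde P_n$ and $\tilde P_{n-1}$ with unshifted parameters, and cancelling the $q$-Pochhammer factors in its coefficient gives the rational function in \eqref{eq5bqj}. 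Matching the coefficient of $x^{n-1}$ on both sides provides an independent check of that function.

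The main obstacle is to certify that \eqref{eq5bqj} is genuinely a relation of \emph{order one}. A naive composition of a $\g$-shift with the $\b$-shift \eqref{eq3bqj} would, after expanding the shifted $\tilde P_{n-1}$, produce a spurious $\tilde P_{n-2}(x;\a,\b,\g;q)$ term; the content of \eqref{eq5bqj} is that for the simultaneous shift this term cancels. (This is also why no single $\g$-shift is listed: it preserves orthogonality and does not serve the quasi-orthogonality argument.) Establishing this cancellation, equivalently the order-one output guaranteed by the certificate of the $q$-Zeilberger computation, is the crux; the remaining difficulty is purely the bookkeeping needed to reduce the ratio of $q$-shifted factorials---where the prefactor changes $(\g q;q)_n\mapsto(\g;q)_n$ and $(\a\b q^{n+1};q)_n\mapsto(\a\b q^{n};q)_n$ shift the index of the Pochhammer symbols rather than a single factor---to the coefficient displayed in \eqref{eq5bqj}.
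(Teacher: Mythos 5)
Your proposal follows essentially the same route as the paper: the paper likewise obtains \eqref{eq1bqj} and \eqref{eq3bqj} by substituting $\alpha\mapsto\alpha/q$ and $\beta\mapsto\beta/q$ into equations (7a) and (7b) of \cite{KoepfJT_2017}, and it derives \eqref{eq5bqj}, like all the mixed recurrences in the paper, via the $q$-Zeilberger-based \texttt{qMixRec} procedure (with the resulting equations collected in the accompanying Maple file), which is exactly the computation you propose to run. Your supplementary observations---that monicity forces the coefficient of $\tilde{P}_n(x;\alpha,\beta,\gamma;q)$ to be $1$, that the shifted polynomial shares the numerator parameter $\alpha\beta q^{n}$ with $\tilde{P}_{n-1}(x;\alpha,\beta,\gamma;q)$, and that the coefficient can be cross-checked against the coefficient of $x^{n-1}$---are correct but serve as verification rather than a different method.
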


\begin{Corollary}
 \begin{gather}
\tilde{P}_n\left(x;\frac{\alpha}{q},\frac{\beta}{q},\gamma;q\right) =\tilde{P}_n(x;\alpha,\beta,\gamma;q) -\frac {\a q\left( \alpha\beta{q}^{2 n}-\beta{q}^{n+1
}-\beta{q}^{n}+q \right) \left( {q}^{n}-1 \right) \left(\gamma{q}^{n}-1 \right) }{ \left( \alpha \beta {q}^{2 n}-1
 \right) \left( \alpha \beta {q}^{2 n}-{q}^{2} \right) }\nonumber\\
 \hphantom{\tilde{P}_n\left(x;\frac{\alpha}{q},\frac{\beta}{q},\gamma;q\right) =}{}\times \tilde{P}_{n-1}(x;\alpha,\beta,\gamma;q)\nonumber\\
 \hphantom{\tilde{P}_n\left(x;\frac{\alpha}{q},\frac{\beta}{q},\gamma;q\right) =}{}
 -\frac {\a^2 \beta \left( {q}^{n}-1 \right) \left( \beta{q}^{n}-q \right) \left( \alpha{q}^{n}-q \right) \left( \gamma{q}^{n}-1 \right) \left(\gamma{q}^{n}-q \right) \left( {q}^{n}-q \right){q}^{n+3} }{ \left( \alpha \beta {q}^{2 n}-{q}^{2} \right) ^{2} \left( \alpha \beta {q}^{2 n}-{q}^{3} \right) \left( \alpha \beta {q}^{2 n}-q \right) }\nonumber\\
 \hphantom{\tilde{P}_n\left(x;\frac{\alpha}{q},\frac{\beta}{q},\gamma;q\right) =}{} \times \tilde{P}_{n-2}(x;\alpha,\beta,\gamma;q).\label{eq6bqj}
\end{gather}
\end{Corollary}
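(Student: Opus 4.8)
The plan is to realise the simultaneous shift $(\alpha,\beta)\mapsto(\tfrac{\alpha}{q},\tfrac{\beta}{q})$ by composing two of the single-parameter recurrences of the Proposition. Abbreviate \eqref{eq1bqj} and \eqref{eq3bqj} as
\begin{gather*}
\tilde P_n\Bigl(x;\tfrac{\alpha}{q},\beta,\gamma;q\Bigr)=\tilde P_n(x;\alpha,\beta,\gamma;q)+A_n\,\tilde P_{n-1}(x;\alpha,\beta,\gamma;q),\\
\tilde P_n\Bigl(x;\alpha,\tfrac{\beta}{q},\gamma;q\Bigr)=\tilde P_n(x;\alpha,\beta,\gamma;q)+B_n\,\tilde P_{n-1}(x;\alpha,\beta,\gamma;q),
\end{gather*}
where $A_n$ and $B_n$ are the rational functions of $q^n$ displayed there. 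First I would apply the $\alpha$-shift \eqref{eq1bqj}, with $\beta$ replaced by $\beta/q$ throughout, to the left-hand side of \eqref{eq6bqj}, obtaining
\begin{gather*}
\tilde P_n\Bigl(x;\tfrac{\alpha}{q},\tfrac{\beta}{q},\gamma;q\Bigr)=\tilde P_n\Bigl(x;\alpha,\tfrac{\beta}{q},\gamma;q\Bigr)+\widehat A_n\,\tilde P_{n-1}\Bigl(x;\alpha,\tfrac{\beta}{q},\gamma;q\Bigr),
\end{gather*}
where $\widehat A_n=A_n|_{\beta\to\beta/q}$; concretely the factor $\beta q^n-1$ becomes $\beta q^{n-1}-1$ and the denominator factors $\alpha\beta q^{2n}-1$, $\alpha\beta q^{2n}-q$ become $\alpha\beta q^{2n-1}-1$, $\alpha\beta q^{2n-1}-q$.

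Both polynomials on the right now carry the parameter triple $(\alpha,\tfrac{\beta}{q},\gamma)$, so I would next invoke the $\beta$-shift \eqref{eq3bqj} at indices $n$ and $n-1$ to return everything to the base parameters $(\alpha,\beta,\gamma)$. Substituting and collecting yields a linear combination of $\tilde P_n$, $\tilde P_{n-1}$ and $\tilde P_{n-2}$ at the base parameters, with coefficients $1$, $\;B_n+\widehat A_n$ and $\widehat A_n B_{n-1}$ respectively. That no term below $\tilde P_{n-2}$ survives is structural: each elementary shift drops the degree index by at most one, and only two are composed.

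It then remains to check that $B_n+\widehat A_n$ and $\widehat A_n B_{n-1}$ coincide with the coefficients of $\tilde P_{n-1}$ and $\tilde P_{n-2}$ stated in \eqref{eq6bqj}, and this $q$-algebraic simplification is the only real work. The bookkeeping is eased by the identifications $\alpha\beta q^{2n-1}-1=\tfrac{1}{q}(\alpha\beta q^{2n}-q)$, $\alpha\beta q^{2n-1}-q=\tfrac{1}{q}(\alpha\beta q^{2n}-q^2)$, $\alpha\beta q^{2n-2}-1=\tfrac{1}{q^2}(\alpha\beta q^{2n}-q^2)$ and $\alpha\beta q^{2n-2}-q=\tfrac{1}{q^2}(\alpha\beta q^{2n}-q^3)$, which turn the denominator of $\widehat A_n B_{n-1}$ into exactly $(\alpha\beta q^{2n}-q^2)^2(\alpha\beta q^{2n}-q^3)(\alpha\beta q^{2n}-q)$ up to a power of $q$ absorbed in the numerator, matching the displayed $\tilde P_{n-2}$ coefficient once the two single factors are multiplied.

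The main obstacle is the $\tilde P_{n-1}$ coefficient. Using the same identifications, the common denominator of $B_n+\widehat A_n$ is $(\alpha\beta q^{2n}-1)(\alpha\beta q^{2n}-q)(\alpha\beta q^{2n}-q^2)$, and one must verify that after clearing denominators the numerator carries a factor $\alpha\beta q^{2n}-q$, whose cancellation leaves the denominator $(\alpha\beta q^{2n}-1)(\alpha\beta q^{2n}-q^2)$ together with the numerator factor $\alpha\beta q^{2n}-\beta q^{n+1}-\beta q^n+q$ (the common $(q^n-1)(\gamma q^n-1)$ factoring out along the way). This cancellation is the delicate point; it is routine but lengthy, and I would confirm it with the \texttt{qMixRec} procedure described above, which in fact generates \eqref{eq6bqj} directly.
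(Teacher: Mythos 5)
Your proposal is correct and follows exactly the paper's own route: substitute $\beta\mapsto\beta/q$ into \eqref{eq1bqj}, then eliminate the two polynomials with parameters $\big(\alpha,\tfrac{\beta}{q},\gamma\big)$ via \eqref{eq3bqj} at indices $n$ and $n-1$, and simplify the resulting coefficients $B_n+\widehat A_n$ and $\widehat A_n B_{n-1}$. Your identified cancellation of the factor $\alpha\beta q^{2n}-q$ in the $\tilde P_{n-1}$ coefficient is indeed what happens, and both coefficients reduce to those displayed in \eqref{eq6bqj}.
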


\begin{proof} By replacing $\b$ with $\frac{\b}{q}$ in \eqref{eq1bqj}, we obtain an equation involving polynomials \linebreak $\tilde{P}_n\big(x;\frac{\a}{q},\frac{\b}{q},\gamma;q\big)$, $\tilde{P}_n\big(x;\a,\frac{b}{q},\gamma;q\big)$ and $\tilde{P}_{n-1}\big(x;\a,\frac{\b}{q},\gamma;q\big)$. We use~\eqref{eq3bqj} to replace the latter two polynomials and, after simplifying, we obtain~\eqref{eq6bqj}.
\end{proof}

We will start by proving the quasi-orthogonality of the sequence $\big\{\tilde{P}_{n}\big(x;\frac{\a}{q^k},{\beta},\gamma;q\big)\big\}_{n=0}^{\infty}$. In order to ensure that the parameter $\frac{\a}{q^k}$ is not in the region where orthogonality is guaranteed, we fix $\a>1$ with $0<\a q<1$, such that $\frac{\a}{q^k}>1$, $k\in\{1,2,\dots,n-1\}$.

\begin{Theorem}\label{Thmbj} Let $k,l,m\in\nn_0$, $\a, \b,\gamma\in \rr$, $0<\a q<1$, $0\leq \beta q<1$ and $\gamma<0$. The sequence of big $q$-Jacobi polynomials
\begin{itemize}\itemsep=0pt
\item[$(i)$]
$\big\{\tilde{P}_{n}\big(x;\frac{\a}{q^k},{\beta},\gamma;q\big)\big\}_{n=0}^{\infty}$, $\a>1$, is quasi-orthogonal of order $k\le n-1$ with respect to $w(x)$ on the interval $(\g q,\a q)$ and the polynomials have at least $(n-k)$ real, distinct zeros in $(\g q,\a q)$;
\item[$(ii)$]
$\big\{\tilde{P}_{n}\big(x;\a,\frac{\b}{q^m},\gamma;q\big)\big\}_{n=0}^{\infty}$, $\b>1$, is quasi-orthogonal of order $m \le n-1$ with respect to $w(x)$ on $(\g q,\a q)$ and the polynomials have at least $(n-m)$ real, distinct zeros in $(\g q,\a q)$;
\item[$(iii)$]
$\big\{\tilde{P}_{n}\big(x;\a,\frac{\b}{q^l},\frac{\gamma}{q^l};q\big)\big\}_{n=0}^{\infty}$, $\b>1$, is quasi-orthogonal of order $l \le n-1$ with respect to $w(x)$ on $(\g q,\a q)$ and the polynomials have at least $(n-l)$ real, distinct zeros in $(\g q,\a q)$;
\item[$(iv)$]
$\big\{\tilde{P}_{n}\big(x;\frac{\a}{q^k},\frac{\beta}{q^m},\gamma;q\big)\big\}_{n=0}^{\infty}$, $\a,\b>1$ is quasi-orthogonal of order $k+m\le n-1$ with respect to~$w(x)$ on $(\g q,\a q)$ and the polynomials have at least $n-(k+m)$ real, distinct zeros in $(\g q,\a q)$.
\end{itemize}
\end{Theorem}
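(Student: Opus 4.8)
The central strategy is to iterate the single-parameter-shift recurrences in equations \eqref{eq1bqj}, \eqref{eq3bqj}, \eqref{eq5bqj} so as to express each shifted polynomial as a linear combination of the orthogonal polynomials $\tilde P_{n-i}(x;\alpha,\beta,\gamma;q)$, and then invoke Lemma~\ref{Brez} to read off quasi-orthogonality, followed by Lemma~\ref{location} for the zero count. Let me sketch this for part~$(i)$; the other parts are analogous.

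\emph{Iterating the recurrence.} Equation \eqref{eq1bqj} already gives the $k=1$ case: it expresses $\tilde P_n(x;\tfrac{\alpha}{q},\beta,\gamma;q)$ as $\tilde P_n(x;\alpha,\beta,\gamma;q)$ plus a nonzero multiple of $\tilde P_{n-1}(x;\alpha,\beta,\gamma;q)$, which is precisely the form \eqref{charac_quasi} with $a_{n,0}=1$ and $a_{n,1}$ equal to the displayed coefficient. For general $k$ I would argue by induction on $k$: assuming $\tilde P_n(x;\tfrac{\alpha}{q^{k-1}},\beta,\gamma;q)=\sum_{i=0}^{k-1}a_{n,i}\tilde P_{n-i}(x;\alpha,\beta,\gamma;q)$ with $a_{n,0}a_{n,k-1}\neq 0$, I apply \eqref{eq1bqj} with $\alpha$ replaced by $\tfrac{\alpha}{q^{k-1}}$ to each polynomial $\tilde P_{n-i}(x;\tfrac{\alpha}{q^{k-1}},\beta,\gamma;q)$ appearing when I write $\tilde P_n(x;\tfrac{\alpha}{q^{k}},\beta,\gamma;q)$ in terms of the $(k-1)$-shifted family. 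Concretely, $\tilde P_n(x;\tfrac{\alpha}{q^{k}},\beta,\gamma;q)=\tilde P_n(x;\tfrac{\alpha}{q^{k-1}},\beta,\gamma;q)+c_n\,\tilde P_{n-1}(x;\tfrac{\alpha}{q^{k-1}},\beta,\gamma;q)$ for an explicit nonzero $c_n$, and expanding both terms by the induction hypothesis yields a combination of $\tilde P_{n-i}(x;\alpha,\beta,\gamma;q)$ for $i=0,\dots,k$. This reduces the matter to checking that the two extreme coefficients survive.

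\emph{Nonvanishing of the end coefficients.} The leading coefficient $a_{n,0}$ stays equal to $1$ throughout (the top-degree term is unaffected by the parameter shift, since these are monic polynomials), so $a_{n,0}\neq 0$ is automatic. The genuine obligation is $a_{n,k}\neq 0$: this is the product of the $k$ individual shift coefficients from \eqref{eq1bqj} taken at successively shifted $\alpha$ values, namely a product of factors of the shape $\tfrac{\alpha' q(q^{n-i}-1)(\beta q^{n-i}-1)(\gamma q^{n-i}-1)}{(\alpha'\beta q^{2(n-i)}-1)(\alpha'\beta q^{2(n-i)}-q)}$. Here the parameter restrictions do the work: for $1\le k\le n-1$ one has $q^{n-i}\neq 1$, and with $\gamma<0$ the factor $(\gamma q^{n-i}-1)\neq 0$, with $0\le\beta q<1$ the factor $(\beta q^{n-i}-1)\neq 0$, and the denominators are nonzero under the same sign constraints; crucially $\alpha'>1$ at each stage keeps the numerator factor $\alpha' q\neq 0$. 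I expect this bookkeeping --- confirming that no factor in the telescoped product degenerates for the admissible range of $n,k$ and parameters --- to be the main obstacle, as it is where the hypotheses $\alpha>1$, $0<\alpha q<1$, $0\le\beta q<1$, $\gamma<0$ are used in an essential way, and it must be done separately for the coupled shift in part~$(iv)$ using the Corollary.

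\emph{Conclusion and the zero count.} Once $a_{n,0}a_{n,k}\neq 0$ is established, Lemma~\ref{Brez} gives quasi-orthogonality of order exactly $k$ of $\tilde P_n(x;\tfrac{\alpha}{q^k},\beta,\gamma;q)$ with respect to $w(x)$ on $(\gamma q,\alpha q)$, and Lemma~\ref{location} immediately yields at least $(n-k)$ real, distinct zeros in $(\gamma q,\alpha q)$. For parts~$(ii)$ and~$(iii)$ I would run the identical induction driven by \eqref{eq3bqj} and \eqref{eq5bqj} respectively, checking the analogous end-coefficient nonvanishing (using $\beta>1$ in place of $\alpha>1$). For part~$(iv)$, the single step is supplied by the Corollary, equation~\eqref{eq6bqj}, which already realizes the double shift $(\tfrac{\alpha}{q},\tfrac{\beta}{q})$ as a combination of $\tilde P_n,\tilde P_{n-1},\tilde P_{n-2}$; iterating this $k$ times in $\alpha$ and $m$ times in $\beta$ (in either order, since the shifts commute) produces a combination of $\tilde P_{n-i}(x;\alpha,\beta,\gamma;q)$ for $i=0,\dots,k+m$, and one verifies that the coefficient of the lowest term $\tilde P_{n-(k+m)}$ is a nonzero product of factors of the type appearing in the last line of~\eqref{eq6bqj}, whence Lemmas~\ref{Brez} and~\ref{location} again close the argument.
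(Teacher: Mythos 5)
Your proposal is correct and follows essentially the same route as the paper: iterate the one-step shift relations \eqref{eq1bqj}, \eqref{eq3bqj}, \eqref{eq5bqj} (and \eqref{eq6bqj} for the double shift) to write the shifted polynomial in the form \eqref{charac_quasi}, then apply Lemma~\ref{Brez} for quasi-orthogonality and Lemma~\ref{location} for the zero count. In fact you are more careful than the paper, which says only ``by iteration'' and leaves implicit the verification that the extreme coefficients satisfy $a_{n,0}a_{n,k}\neq 0$; your explicit check that the lowest coefficient telescopes into a nonvanishing product of shift coefficients is exactly the detail the paper omits.
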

\begin{proof} (i) Fix $\a>1$ such that $0<\alpha q<1$. From Lemma~\ref{Brez} and \eqref{eq1bqj}, it follows that $\tilde{P}_{n}\big(x;\frac{\a}{q},{\beta},\gamma;q\big)$ is quasi-orthogonal of order one on $(\g q, \a q)$ and according to Lemma~\ref{location}, at least $(n-1)$ zeros of $\tilde{P}_{n}\big(x;\frac{\a}{q},{\beta},\gamma;q\big)$ lie in the interval~$(\g q,\a q)$. By iteration, we can express $\tilde{P}_{n}\big(x;\frac{\a}{q^k},{\b},\gamma;q\big)$ as a linear combination of $\tilde{P}_{n}(x;{\a},{\beta},\gamma;q)$, $\tilde{P}_{n-1}(x;{\a},{\beta},\gamma;q)$, $\dots$, $\tilde{P}_{n-k}(x;{\a},{\beta},\gamma;q)$, and from Lemma~\ref{Brez} we deduce that $\tilde{P}_{n}\big(x;\frac{\a}{q^k},{\beta},\gamma;q\big)$ is quasi-orthogonal of order~$k$ on $(\g q, \a q)$. It follows from Lemma~\ref{location} that at least $(n-k)$ zeros of $\tilde{P}_{n}\big(x;\frac{\a}{q^k},{\beta},\gamma;q\big)$ are in~$(\g q,\a q)$.

(ii)--(iii)
Fix $\b>1$ such that $0<\b q<1$. The proofs follow in exactly the same way as the proof of~(i), by using \eqref{eq3bqj} and~\eqref{eq5bqj}, together with Lemmas~\ref{Brez} and~\ref{location}.

(iv) Fix $\a>1$, $\b>1$ such that $0<\alpha q<1$ and $0< \beta q<1$. From \eqref{eq6bqj}, $\tilde{P}_{n}\big(x;\frac{\a}{q},\frac{\beta}{q},\gamma;q\big)$ can be written as a linear combination of $\tilde{P}_{n}(x;{\a},{\beta},\gamma;q)$, $\tilde{P}_{n-1}(x;{\a},{\beta},\gamma;q)$ and $\tilde{P}_{n-2}(x;{\a},{\beta},\gamma;q)$, and it follows from Lemma~\ref{Brez} that each polynomial $\tilde{P}_{n}\big(x;\frac{\a}{q},\frac{\beta}{q},\gamma;q\big)$, $n\in\{1,2,\dots\}$, is quasi-orthogonal of order two on $(\g q, \a q)$. From Lemma~\ref{location}, we know that at least $(n-2)$ zeros of $\tilde{P}_{n}\big(x;\frac{\a}{q},\frac{\beta}{q},\gamma;q\big)$ lie in $(\g q,\a q)$. By iteration, we can express $\tilde{P}_{n}\big(x;\frac{\a}{q^k},\frac{\beta}{q^m},\gamma;q\big)$ as a linear combination of $\tilde{P}_{n}(x;{\a},{\beta},\gamma;q)$, $\tilde{P}_{n-1}(x;{\a},{\beta},\gamma;q),\dots,\tilde{P}_{n-(k+m)}(x;{\a},{\beta},\gamma;q)$, and the results follow directly from Lemmas~\ref{Brez} and~\ref{location}.
\end{proof}

In order to determine the location of the zeros of the order one and order two quasi-orthogonal systems, we use a $q$-analogue of the Vandermonde identity~\cite[equation~(1.2.9)]{GR}, namely~\cite[equation~(1.5.3)]{GR}
\begin{gather}\label{vdM}
\qhypergeom{2}{1}{q^{-n},b}{c}{q;q}=\frac{(\frac cb;q)_n }{(c;q)_n} b^n.
\end{gather}

\begin{Theorem}\label{Th}
Let $n\in\nn$, $\a, \b,\gamma\in \rr$, such that $0<\alpha q, \b q<1$ and $\g<0$. Suppose $x_{n,j}$, $j\in\{1,2,\dots,n\}$ denote the zeros of $\tilde{P}_{n}(x;\a,\b,\g;q)$, $y_{n,j}$, $j\in\{1,2,\dots,n\}$ the zeros of $\tilde{P}_{n}\big(x;\frac{\a}{q},\b,\g;q\big)$, $z_{n,j}$, $j\in\{1,2,\dots,n\}$ the zeros of $\tilde{P}_{n}\big(x;\a,\frac{\b}{q},\g;q\big)$, $v_{n,j}$, $j\in\{1,2,\dots,n\}$ the zeros of $\tilde{P}_{n}\big(x;\frac{\a}{q},\frac{\b}{q},\g;q\big)$ and $w_{n,j}$, $j\in\{1,2,\dots,n\}$ the zeros of $\tilde{P}_{n}\big(x;{\a},\frac{\b}{q},\frac{\g}{q};q\big)$. Then,
\begin{itemize}\itemsep=0pt
\item[$(i)$] when we fix $\a>1$, such that $0<\a q<1$,
\begin{gather*} \g q<x_{n,1}<y_{n,1}<x_{n-1,1}<x_{n,2}<y_{n,2}<x_{n-1,2}<\cdots<x_{n-1,n-1}<x_{n,n}<y_{n,n};
\end{gather*}
\item[$(ii)$] when we fix $\b>1$, such that $0<\b q<1$,
\begin{gather*} z_{n,1}<\g q<x_{n,1}<x_{n-1,1}<z_{n,2}<x_{n,2}<\cdots<x_{n-1,n-1}<z_{n,n}<x_{n,n}<\a q;
\end{gather*}
\item[$(iii)$] when we fix $\b>1$, such that $0<\b q<1$, we also have
\begin{gather*} w_{n,1}<x_{n,1}<x_{n-1,1}<w_{n,2}<x_{n,2}<\cdots<x_{n-1,n-1}<w_{n,n}<x_{n,n}<\a q;
\end{gather*}
\item[$(iv)$] when we fix $\a,\b>1$, such that $0<\a q,\b q<1$, all the zeros of $\tilde{P}_{n}\big(x;\frac{\a}{q},\frac{\b}{q},\g;q\big)$ are real and distinct and $v_{n,1}<\g q$.
\end{itemize}
\end{Theorem}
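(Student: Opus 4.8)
The plan is to reduce parts $(i)$--$(iii)$ to the order-one location and interlacing results, Lemmas~\ref{Joulak1} and~\ref{Joulak2}, applied to the three-term relations \eqref{eq1bqj}, \eqref{eq3bqj} and \eqref{eq5bqj}, and to treat the order-two case $(iv)$ separately. Two computations recur throughout: a sign count of the connection coefficient multiplying $\tilde P_{n-1}$, and an evaluation of $\tilde P_n$ at an endpoint of $(\g q,\a q)$. The latter is always feasible because at $x=\g q$ a lower parameter of the ${}_3\phi_2$ in \eqref{eq:BigqJacobi} cancels against the upper entry $x$, collapsing the series to a ${}_2\phi_1$ that the $q$-Vandermonde sum \eqref{vdM} evaluates in closed form.

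For part $(i)$ I would read from \eqref{eq1bqj} the identity $\tilde P_n(x;\frac{\a}{q},\b,\g;q)=\tilde P_n(x;\a,\b,\g;q)+a_n\tilde P_{n-1}(x;\a,\b,\g;q)$ with $a_n$ the displayed coefficient, and then count factor signs under $0<\a q<1$, $0\le\b q<1$, $\g<0$, $0<q<1$: the numerator $\a q\,(q^n-1)(\b q^n-1)(\g q^n-1)$ is a positive number times three negative factors, hence negative, while the denominator is the product of the two negative factors $\a\b q^{2n}-1$ and $\a\b q^{2n}-q$, hence positive, so $a_n<0$. Lemma~\ref{Joulak2}$(i)$ then gives precisely the interlacing chain, and $\g q<x_{n,1}$ is immediate from the orthogonality of $\tilde P_n(x;\a,\b,\g;q)$ on $(\g q,\a q)$. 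Parts $(ii)$ and $(iii)$ proceed identically from \eqref{eq3bqj} and \eqref{eq5bqj}; the same kind of sign count, now carrying the explicit overall minus signs in those coefficients, yields $a_n'>0$ and $a_n''>0$, so Lemma~\ref{Joulak2}$(ii)$ produces the stated interlacing. To obtain in addition $z_{n,1}<\g q$ in part $(ii)$ I would invoke Lemma~\ref{Joulak1}$(i)$ with $a=\g q$: with $f_n(x)=\tilde P_n(x;\a,\b,\g;q)/\tilde P_{n-1}(x;\a,\b,\g;q)$, the endpoint value $f_n(\g q)$ is computed explicitly from \eqref{vdM} as above, and one checks that $-a_n'<f_n(\g q)<0$.

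Part $(iv)$ is the genuinely harder one, since \eqref{eq6bqj} writes $\tilde P_n(x;\frac{\a}{q},\frac{\b}{q},\g;q)$ as an order-two combination $\tilde P_n+A_n\tilde P_{n-1}+B_n\tilde P_{n-2}$ of the orthogonal family, for which no order-two analogue of Lemma~\ref{Joulak2} is available. My plan is to combine three ingredients. First, Theorem~\ref{Thmbj}$(iv)$ and Lemma~\ref{location} guarantee at least $n-2$ simple real zeros in $(\g q,\a q)$. Second, I would evaluate $\tilde P_n(\g q;\frac{\a}{q},\frac{\b}{q},\g;q)$ by \eqref{vdM}: after the shifts the lower parameters of the ${}_3\phi_2$ are $\a$ and $\g q$, so at $x=\g q$ the factor $\g q$ again cancels and the series collapses to a summable ${}_2\phi_1$; comparing the sign of this value with the sign of the leading coefficient (equivalently, the behaviour as $x\to-\infty$) exhibits a sign change on $(-\infty,\g q)$, hence a real zero $v_{n,1}<\g q$. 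Third, having produced $n-1$ distinct real zeros, I would use that the non-real zeros of a real polynomial occur in conjugate pairs to force the remaining zero to be real as well.

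The main obstacle is exactly this reality statement in $(iv)$: producing all $n$ real zeros, and proving they are distinct, without an order-two interlacing lemma. The delicate points are to secure the sign of $\tilde P_n(\g q;\frac{\a}{q},\frac{\b}{q},\g;q)$ relative to the leading term for every $n$, so that the sign change below $\g q$ is guaranteed and not merely asymptotic, and to rule out that the zero supplied by the parity argument collides with an interior zero (which would make it a double zero rather than a new simple one). An alternative I would keep in reserve is to realise the double shift in two single steps --- first $\b\to\frac{\b}{q}$ as in $(ii)$, then $\a\to\frac{\a}{q}$ as in $(i)$ applied to the resulting sequence, using the relation from the Corollary --- and to deduce reality and simplicity of the combination from a Hermite--Biehler-type principle once the zeros of two consecutive members of the intermediate (quasi-orthogonal) sequence are shown to interlace; the cost is that this intermediate family is no longer orthogonal, so Lemma~\ref{Joulak2} must be replaced by such a principle.
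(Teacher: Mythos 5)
Your parts (i)--(iii) coincide with the paper's own proof: the same sign determination of the coefficient of $\tilde{P}_{n-1}$ in \eqref{eq1bqj}, \eqref{eq3bqj} and \eqref{eq5bqj}, followed by Lemma~\ref{Joulak2}(i) or (ii), with the extra bound $z_{n,1}<\g q$ in (ii) obtained exactly as you propose, by collapsing $\tilde{P}_n(\g q;\a,\b,\g;q)$ to a ${}_2\phi_1$, applying \eqref{vdM}, and invoking Lemma~\ref{Joulak1}(i); the bounds $\g q<x_{n,1}$ and $x_{n,n}<\a q$ come from orthogonality of the unshifted family.

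Part (iv) is where your proposal has a genuine gap, which you identify but do not close. Your sign-change-plus-parity argument can at best give that all zeros are \emph{real}: after producing $n-2$ distinct zeros in $(\g q,\a q)$ and one zero below $\g q$, the remaining root must be real because non-real roots of a real polynomial pair up, but nothing in your outline excludes that this last root coincides with one of the $n-1$ zeros already found, so the claimed \emph{distinctness} is not established. This is not a removable technicality of your route; it is precisely what the paper's key citation supplies. The paper checks that the coefficient $b_n$ of $\tilde{P}_{n-2}(x;\a,\b,\g;q)$ in \eqref{eq6bqj} is negative and then quotes \cite[Theorem~4]{BDR}, which asserts that for an order-two combination $Q_n=P_n+a_nP_{n-1}+b_nP_{n-2}$ of orthogonal polynomials the single condition $b_n<0$ forces all $n$ zeros of $Q_n$ to be real and distinct. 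Without this result (or a proof of its content) your plan does not prove (iv); with it, the parity argument becomes unnecessary.

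For the location $v_{n,1}<\g q$ your instinct is right but your execution is the harder path. The paper uses \cite[Theorem~9]{Joulak_2005}, whose hypothesis $f_n(\g q)f_{n-1}(\g q)+a_nf_{n-1}(\g q)+b_n<0$ is your sign-change idea made precise: since $P_{n-2}(\g q)$ has sign $(-1)^n$, the inequality says $Q_n(\g q)$ has sign $(-1)^{n+1}$, opposite to the monic behaviour as $x\to-\infty$, forcing a zero in $(-\infty,\g q)$. Routing the computation through $a_n$, $b_n$ and the ratios $f_n$, $f_{n-1}$ (each evaluated in closed form by \eqref{vdM}) collapses everything to one rational expression whose factors have readily determined signs. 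Your alternative, evaluating $\tilde{P}_n\big(\g q;\frac{\a}{q},\frac{\b}{q},\g;q\big)$ outright, yields a product involving $\big(q^{1-n}\b^{-1};q\big)_n$ and $\big(\a\b q^{n-1};q\big)_n$, whose signs must be tracked factor by factor and are not uniform in $n$ in the range of small $n$ --- exactly the non-uniformity you flag as a risk. The Hermite--Biehler fallback you hold in reserve is likewise not carried out and is not needed once \cite[Theorem~4]{BDR} and \cite[Theorem~9]{Joulak_2005} are in hand.
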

\begin{proof}
(i) From \eqref{eq1bqj}, we obtain
\begin{gather*}
a_n={\frac {\a q \left( {q}^{n}-1 \right) \left( \beta{q}^{n}-1 \right)
 \left( \gamma {q}^{n}-1 \right) }
 { \left( \alpha\beta{q}^{2n}-q \right) \left( \alpha\beta{q}^{2n}-1 \right) }}<0,
\end{gather*} and the interlacing result, as well as the location of $y_{n,1}$, follows from Lemma~\ref{Joulak2}(i).

(ii) From \eqref{eq3bqj}, we obtain
\begin{gather*}
a_n=-\frac { \alpha \beta {q}^{n+1}\left( {q}^{n}-1 \right) \left( \alpha {q}^{n}-1 \right)\left( \gamma {q}^{n}-1 \right) }{ \left( \alpha \beta {q}^{2 n}-1 \right)
 \left( \alpha \beta {q}^{2n}-q \right) },
 \end{gather*}
 which is positive when taking into consideration the values of the parameters. The interlacing result, as well as the location of $y_{n,n}$, follows from Lemma~\ref{Joulak2}(ii).

 The polynomial $\tilde{P}_{n}(x;\a,{\b},{\gamma};q)$ evaluated at $x=\g q$, can be written in terms of a $_2 \phi_1$-hypergeometric function. We apply~(\ref{vdM}), and simplify, to obtain
 \begin{gather}\label{fn}f_n(\g q)= \frac{\tilde{P}_{n}(\g q;\a,{\b},{\gamma};q)}{\tilde{P}_{n-1}(\g q;\a,{\b},{\gamma};q)}={\frac {\alpha \left( \alpha\beta{q}^{n}-1 \right)
 \left( \beta{q}^{n}-1 \right) \left( \gamma{q}^{n}-1 \right){q}^{n+1} }{
 \left( \alpha\beta{q}^{2n}-q \right) \left( \alpha\beta{q}
^{2n}-1 \right) }}\end{gather}
 and by taking into account the values of the parameters, this expression is negative. Since
\begin{gather*}
-a_n- f_n(\g q)=-{\frac {\alpha \left( \beta-1 \right) \left( \gamma {q}^
{n}-1 \right){q}^{n+1} }{\alpha\beta{q}^{2n}-q}}<0,
\end{gather*} the result follows from Lemma~\ref{Joulak1}(i).

(iii) From \eqref{eq5bqj}, we obtain
\begin{gather*}
a_n=-{\frac { \left( {q}^{n}-1 \right) \left( \alpha {q}^{n}-1 \right)
 \left( -\alpha\beta {q}^{n}+\gamma \right) q}{ \left( \alpha \beta {q}^{2n}-q \right) \left( \alpha\beta {q}^{2n}-1 \right) }}>0,
 \end{gather*} and the interlacing result, as well as the location of~$w_{n,n}$, follows from Lemma~\ref{Joulak2}(ii).

 (iv) Fix $\a>1$ and $\b>1$ such that $0<\alpha q<1$ and $0< \beta q<1$. We use~\eqref{eq6bqj}, with $a_n$ the coefficient of $\tilde{P}_{n-1}(x;\alpha,\beta,\gamma;q)$ and $b_n$ the coefficient of $\tilde{P}_{n-2}(x;\alpha,\beta,\gamma;q)$. By taking into account the values of the parameters,
\begin{gather*}
b_n= -\frac {\a^2 \beta \left( {q}^{n}-1 \right) \left( \beta{q}^{n}-q \right) \left( \alpha{q}^{n}-q \right) \left( \gamma{q}^{n}-1 \right) \left(\gamma{q}^{n}-q \right) \left( {q}^{n}-q \right){q}^{n+3} }{ \left( \alpha \beta {q}^{2 n}-{q}^{2} \right) ^{2} \left( \alpha \beta {q}^{2 n}-{q}^{3} \right) \left( \alpha \beta {q}^{2 n}-q \right) }<0,
\end{gather*}
and it follows from \cite[Theorem~4]{BDR} that $v_{n,j}$, $j\in\{1,2,\dots,n\}$, are real.

In order to determine the location of $v_{n,1}$ and $v_{n,n}$, we use \cite[Theorem 9]{Joulak_2005}. Since
\begin{gather*}f_n(\g q) f_{n-1}(\g q)+a_nf_{n-1}(\g q)+b_n = \frac{\tilde{P}_{n}(\g q;\alpha,\beta,\gamma;q)}{\tilde{P}_{n-1}(\g q;\alpha,\beta,\gamma;q)}+a_n\frac{\tilde{P}_{n-1}(\g q;\alpha,\beta,\gamma;q)}{\tilde{P}_{n-2}(\g q;\alpha,\beta,\gamma;q)}+b_n\\
\hphantom{f_n(\g q) f_{n-1}(\g q)+a_nf_{n-1}(\g q)+b_n}{}
 = {\frac { {\alpha}^{2} \left( \beta-1 \right) \left( \gamma{q}^{n}-
1 \right) \left(\b {q}^{n}-q \right) \left( \gamma{
q}^{n}-q \right){q}^{n+2} }{ \left( \alpha \beta {q}^{2n}
-{q}^{2} \right) \left( \alpha \beta {q}^{2n}-{q}^{3} \right) }} < 0,
\end{gather*}
it follows that $v_{n,1}<\g q$. Furthermore,
\begin{gather*} f_n(\a q)f_{n-1}(\a q)+a_nf_{n-1}(\a q)+b_n\\
\qquad {} =-{\frac { \left( \alpha{q}^{n}-q \right) \left( {\alpha}^
{2}\beta {q}^{2n} -\alpha \gamma {q}^{n+1}-\a \beta{q}^{n}-\alpha{q}^{n+1}+
\gamma{q}^{n+1}+\alpha q \right) \left(\gamma -\alpha \beta \right){q}^{n+2} }{ \left(\a \beta{q}^{2n}-{q}^{3} \right)
 \left(\a \beta{q}^{2n}-{q}^{2} \right) }}\end{gather*}
and since the sign of this expression varies as the parameters vary within the regions applicable, we cannot determine the position of~$v_{n,n}$.
\end{proof}

\begin{Remark}\quad
\begin{itemize}\itemsep=0pt
\item [(i)]
From Theorem~\ref{Thmbj}(i) we know that the polynomial $\tilde{P}_{n}\big(x;\frac{\a}{q},{\beta},\gamma;q\big)$, $\a>1$, is quasi-orthogonal of order one and an interlacing result is proved in Theorem~\ref{Th}(i), but the location of the extreme zero~$y_{n,n}$, with respect to $(\g q,\a q)$, is not fixed, since the sign of
\begin{gather}-a_n- f_n(\a q) = a_n-\frac{\tilde{P}_{n}(\a q;{\a},{\beta},\gamma;q)}{\tilde{P}_{n-1}(\a q;{\a},{\beta},\gamma;q)}\nonumber\\
\hphantom{-a_n- f_n(\a q)}{} =-{\frac { \left( -{\alpha}^{2}{q}^{2n}\beta+\alpha \beta {q}^{n}+{
q}^{n}\alpha \gamma+\alpha {q}^{n}-\gamma {q}^{n}-\alpha \right) q
}{\alpha \beta {q}^{2n}-q}}\label{use}
\end{gather}
changes as the parameters vary within the region applicable.
\item [(ii)]
When $\b=0$ in the definition of the big $q$-Jacobi polynomials~\eqref{eq:BigqJacobi}, we obtain the big $q$-Laguerre polynomials, i.e., $\tilde{P}_n(x;\alpha,0,\g;q)=\tilde{P}_n(x;\a,\g;q)$ \cite[equation~(14.5.13)]{KLS} and we can use~\eqref{eq1bqj} with $\b=0$. Let $x_{n,j}$, $j\in\{1,2,\dots,n\}$, be the zeros of $\tilde{P}_{n}(x;\a,\g;q)$ and~$y_{n,j}$, $j\in\{1,2,\dots,n\}$, the zeros of $\tilde{P}_{n}\big(x;\frac{\a}{q},\g;q\big)$.
When $\b=0$ in~(\ref{use}), we obtain
\begin{gather*} -a_n- f_n(\a q)=a_n-\frac{\tilde{P}_{n}(\a q;{\a},\gamma;q)}{\tilde{P}_{n-1}(\a q;{\a},\gamma;q)} =\g {q}^{n}(\a-1)+\alpha \big({q}^{n}-1\big)<0,
\end{gather*}
taking into consideration that $\a>1$, $0<\a q <1$ and $\g<0$, and
\begin{gather*} f_n(\g q)+a_n=\frac{\tilde{P}_{n}(\g q;{\a},\gamma;q)}{\tilde{P}_{n-1}(\g q;{\a},\gamma;q)}+a_n
=\alpha\ {q}^{n} \big( \gamma {q}^{n}-1 \big) +a_n =\a \big( \gamma {q}^{n}-1 \big)<0.
\end{gather*}
We thus have $f_n(\g q)<-a_n<f_n(\a q)$ and according to Lemma~\ref{Joulak1}(iii), all the zeros of the order one quasi-orthogonal polynomial $\tilde{P}_{n}\big(x;\frac{\a}{q},\g;q\big)$, $\a>1$, lie in $(\g q,\a q)$. Furthermore,
since $a_n<0$, it follows from Lemma~\ref{Joulak2}(ii) that
\begin{gather*} \g q<x_{n,1}<y_{n,1}<x_{n-1,1}<x_{n,2}<y_{n,2}<x_{n-1,2}<\cdots<x_{n-1,n-1}\\
\hphantom{\g q}{} <x_{n,n}<y_{n,n}<\a q.\end{gather*}
\end{itemize}
\end{Remark}

\subsection[The $q$-Hahn polynomials]{The $\boldsymbol{q}$-Hahn polynomials}
The $q$-Hahn polynomials
 \begin{gather*}\tilde{Q}_n(\bar{x};\alpha,\beta,N|q)=\frac{(\alpha q;q)_n(q^{-N};q)_n}{(\alpha\beta q q^n;q)_n}\,\qhypergeom{3}{2}{q^{-n},\alpha\beta q^{n+1},\bar{x}}{\alpha q,q^{-N}}{q;q},\end{gather*}
 with $\bar{x}=q^{-x}$ and $n\in\{0,1,\ldots,N\}$ are orthogonal on $\big(1,q^{-N}\big)$ with respect to the discrete weight $w(x)=\frac{(\a q,q^{-N};q)_x }{(q,\b^{-1}q^{-N};q)_x(\a \b q)^x}$ for $0<\alpha q<1$ and $0<\beta q<1$ or $\alpha >q^{-N}$ and $\beta >q^{-N}$. We recall the definition of quasi-orthogonality with respect to a discrete weight (cf.~\cite{chiharaboek}). A~polynomial $Q_{n,k}$ of exact degree $n \ge k$, $n\in\{0,1,\dots,N\}$, where $N$ may be infinite, is discrete quasi-orthogonal of order $k$ with $w_i$ the weight at each point $x_i$, $i\in\{0,1,\dots,N-1\}$, if
\begin{gather*} \sum_{i=0}^{N-1} (x_i)^m Q_{n,k}(x_i)w_i \ \begin{cases}=0,& \mbox{for} \ m\in\{0,1,\dots,n-k-1\},\\ \neq0, & \mbox{for} \ m=n-k.\end{cases}\end{gather*}

 We will consider the case $0<\alpha q, \beta q<1$. The following equations follow from \cite[equations~(8a) and~(8b)]{KoepfJT_2017}, with $\a$ and $\b$ replaced by $\frac{\a}{q}$ and $\frac{\b}{q}$, respectively,
\begin{subequations}
 \begin{gather}
\tilde{Q}_n\left(\bar{x};\frac{\alpha}{q},\beta,N|q\right)= \tilde{Q}_n(\bar{x};\alpha,\beta,N|q) \nonumber\\
\hphantom{\tilde{Q}_n\left(\bar{x};\frac{\alpha}{q},\beta,N|q\right)=}{} +{\frac {\alpha \left( {q}^{n}-1 \right) \left(\beta q^n-1 \right) \left( q^n-q^{N+1} \right) }{ {q}^{N}\left( \alpha\beta {q}^{2 n}-1 \right) \left(\alpha \beta {q}^{2 n}-q \right) }\tilde{Q}_{n-1}(\bar{x};\alpha ,\beta,N|q)};\label{eq1qh} \\
 \tilde{Q}_n\left(\bar{x};\alpha,\frac{\beta}{q},N|q\right) =\tilde{Q}_n(\bar{x};\alpha,\beta ,N|q)\nonumber\\
 \hphantom{\tilde{Q}_n\left(\bar{x};\alpha,\frac{\beta}{q},N|q\right) =}{} -\frac {\alpha \beta \left(q^n
-q^{N+1} \right) (\alpha q^n -1) ( q^n-1)}{ \left(\alpha \beta{q}^{2 n}-1 \right) \left(\alpha \beta {q}^{2 n}-q \right) {q}^{N-n} }\tilde{Q}_{n-1}(\bar{x};\alpha,\beta ,N|q) .\label{eq2qh}
 \end{gather}
\end{subequations}

\begin{Corollary}
\begin{gather}
\tilde{Q}_{n}\left(\bar{x};\frac{\alpha}{q},\frac{\beta}{q} ,N|q\right) =\tilde{Q}_{n}(\bar{x};\alpha,\beta ,N|q)\nonumber\\
\qquad{} -\frac {\alpha \left( \alpha \beta{q}^{2 n}-\beta{q}^{n+1}-\beta{q}^{n}+q \right) \left( {q}^{n}-1 \right) \left({q}^{n}-{q}^{N+1} \right)}{ \left( \alpha \beta {q}^{2 n}-{q}^{2} \right) \left( \alpha \beta {q}^{2 n}-1 \right){q}^{N} } \tilde{Q}_{n-1}(\bar{x};\alpha,\beta ,N|q)\nonumber\\
\qquad{} -\frac {{\alpha}^{2}\beta q^{n+1} \left( q^n-1 \right) \left(\beta q^n-q \right) \left({q}^{n} -{q}^{N+1} \right) \left(\alpha{q}^{n}-q \right) \left( {q}^{n}-q \right) \left({q}^{n} -{q}^{N+2} \right) }{ \left( \alpha \beta {q}^{2 n}-{q}^{2} \right) ^{2} \left( \alpha \beta {q}^{2 n}-q \right) \left( \alpha \beta {q}^{2 n}-{q}^{3} \right)q^{2N} }\nonumber\\
\qquad{}\times \tilde{Q}_{n-2}(\bar{x};\alpha,\beta ,N|q).\label{eq7qh}
 \end{gather}
\end{Corollary}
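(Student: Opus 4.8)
The plan is to mirror the proof of the analogous big $q$-Jacobi identity \eqref{eq6bqj}: realize the two-step parameter shift by composing the single-shift relations \eqref{eq1qh} and \eqref{eq2qh}. First I would replace $\beta$ by $\frac{\beta}{q}$ throughout \eqref{eq1qh}, which produces the identity
\[
\tilde{Q}_n\left(\bar{x};\frac{\alpha}{q},\frac{\beta}{q},N|q\right)=\tilde{Q}_n\left(\bar{x};\alpha,\frac{\beta}{q},N|q\right)+D_n\,\tilde{Q}_{n-1}\left(\bar{x};\alpha,\frac{\beta}{q},N|q\right),
\]
where $D_n$ is the coefficient of \eqref{eq1qh} evaluated under $\beta\mapsto\frac{\beta}{q}$, namely
\[
D_n=\frac{\alpha(q^n-1)(\beta q^{n-1}-1)(q^n-q^{N+1})}{q^N(\alpha\beta q^{2n-1}-1)(\alpha\beta q^{2n-1}-q)}.
\]
The right-hand side now involves only polynomials carrying the single shift $\beta\mapsto\frac{\beta}{q}$, which is precisely what \eqref{eq2qh} resolves.

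Next I would eliminate those two shifted polynomials on the right. Denoting by $C_n$ the coefficient appearing in \eqref{eq2qh}, that relation reads $\tilde{Q}_n(\bar{x};\alpha,\frac{\beta}{q},N|q)=\tilde{Q}_n(\bar{x};\alpha,\beta,N|q)-C_n\tilde{Q}_{n-1}(\bar{x};\alpha,\beta,N|q)$, while applying \eqref{eq2qh} with $n$ replaced by $n-1$ gives $\tilde{Q}_{n-1}(\bar{x};\alpha,\frac{\beta}{q},N|q)=\tilde{Q}_{n-1}(\bar{x};\alpha,\beta,N|q)-C_{n-1}\tilde{Q}_{n-2}(\bar{x};\alpha,\beta,N|q)$. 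Substituting both into the displayed identity and collecting terms yields a three-term combination of $\tilde{Q}_n$, $\tilde{Q}_{n-1}$ and $\tilde{Q}_{n-2}$ in the unshifted parameters, with coefficients $1$, $D_n-C_n$ and $-D_nC_{n-1}$ respectively. It then remains to check that these reduce to the coefficients displayed in \eqref{eq7qh}.

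The expected main obstacle is the $\tilde{Q}_{n-1}$-coefficient $D_n-C_n$, the only genuine simplification, since it is a difference of two rational functions. A useful preliminary is to rewrite the denominators of $D_n$ in standard form via $\alpha\beta q^{2n-1}-1=q^{-1}(\alpha\beta q^{2n}-q)$ and $\alpha\beta q^{2n-1}-q=q^{-1}(\alpha\beta q^{2n}-q^2)$, so that every denominator is a product of factors of the type $(\alpha\beta q^{2n}-q^i)$. Placing $D_n-C_n$ over a common denominator and factoring the numerator, one finds that the factor $(\alpha\beta q^{2n}-q)$ cancels and the quadratic-in-$q^n$ expression $(\alpha\beta q^{2n}-\beta q^{n+1}-\beta q^n+q)$ of \eqref{eq7qh} emerges. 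By contrast the $\tilde{Q}_{n-2}$-coefficient $-D_nC_{n-1}$ is a product, so no polynomial factorization is needed: each binomial factor reduces directly, for instance $\beta q^{n-1}-1=q^{-1}(\beta q^n-q)$ and $\alpha q^{n-1}-1=q^{-1}(\alpha q^n-q)$, and its denominator becomes $(\alpha\beta q^{2n}-q^2)^2(\alpha\beta q^{2n}-q)(\alpha\beta q^{2n}-q^3)q^{2N}$ as in \eqref{eq7qh}. This bookkeeping is routine but lengthy, and, as with \eqref{eq1qh}--\eqref{eq2qh}, it is most safely carried out with the computer algebra tools already employed in the paper.
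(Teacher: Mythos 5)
Your proposal is correct and follows essentially the same route the paper uses (explicitly for the big $q$-Jacobi corollary \eqref{eq6bqj}, and implicitly for \eqref{eq7qh}): substitute $\beta\mapsto\frac{\beta}{q}$ into \eqref{eq1qh}, then eliminate the singly-shifted polynomials via \eqref{eq2qh} at indices $n$ and $n-1$. Your coefficient bookkeeping checks out as well: $D_n-C_n$ does factor so that $(\alpha\beta q^{2n}-q)$ cancels and the quadratic factor $(\alpha\beta q^{2n}-\beta q^{n+1}-\beta q^{n}+q)$ appears, and $-D_nC_{n-1}$ reduces termwise to the stated $\tilde{Q}_{n-2}$-coefficient.
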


\begin{Theorem}\label{Thmqhqo}
Let $k,m, N\in\nn_0$, $n\in\{0,1,2,\ldots,N\}$, $\a,\b,\in \rr$. For $0<\a q<1$ and \smash{$0< \beta q<1$}, the sequence of $q$-Hahn polynomials
\begin{itemize}\itemsep=0pt
\item[$(i)$] $\big\{\tilde{Q}_{n}\big(\bar{x};\frac{\alpha}{q^k},\beta ,N|q\big)\big\}_{n=0}^{N}$, with $\a>1$, is quasi-orthogonal of order $k\le n-1$ with respect to the discrete weight $w(x)$ on the interval $\big(1,q^{-N}\big)$ and the polynomials have at least $(n-k)$ real, distinct zeros in $\big(1,q^{-N}\big)$;
\item[$(ii)$] $\big\{\tilde{Q}_{n}\big(\bar{x};\a,\frac{\b}{q^m},N|q\big)\big\}_{n=0}^{N}$, $\b>1$, is quasi-orthogonal of order $m\le n-1$ with respect to $w(x)$ on $\big(1,q^{-N}\big)$ and the polynomials have at least $(n-m)$ real, distinct zeros in the interval $\big(1,q^{-N}\big)$;
\item[$(iii)$] $\big\{\tilde{Q}_{n}\big(\bar{x};\frac{\alpha}{q^k},\frac{\beta}{q^m} ,N|q\big)\big\}_{n=0}^{N}$, $\a,\b>1$, is quasi-orthogonal of order $k+m\le n-1$ with respect to $w(x)$ on $\big(1,q^{-N}\big)$ and the polynomials have at least $n-(k+m)$ real, distinct zeros in $\big(1,q^{-N}\big)$.
\end{itemize}
\end{Theorem}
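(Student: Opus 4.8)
The plan is to follow verbatim the structure of the proof of Theorem~\ref{Thmbj}, since the $q$-Hahn polynomials obey mixed recurrences~\eqref{eq1qh}, \eqref{eq2qh} and~\eqref{eq7qh} of exactly the same shape as the big $q$-Jacobi relations~\eqref{eq1bqj}, \eqref{eq3bqj} and~\eqref{eq6bqj}. One preliminary remark is needed: although Lemma~\ref{Brez} is stated for a continuous weight, its proof rests only on the linear independence of the $P_{n-i}$ and the vanishing of the lower moments, so the characterization~\eqref{charac_quasi} applies verbatim to the discrete inner product defining discrete quasi-orthogonality here, and I would invoke it in that form.

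For part~(i), I would fix $\a$ with $0<\a q<1$ and $\a>1$, so that $\frac{\a}{q^k}>1$ lies outside the orthogonality region. Equation~\eqref{eq1qh} presents $\tilde{Q}_n\big(\bar{x};\frac{\a}{q},\b,N|q\big)$ as $\tilde{Q}_n(\bar{x};\a,\b,N|q)+a_{n,1}\tilde{Q}_{n-1}(\bar{x};\a,\b,N|q)$, and for $1\le n\le N$ its coefficient $a_{n,1}$ is nonzero because each of $q^n-1$, $\b q^n-1$ and $q^n-q^{N+1}$ is strictly nonzero in the given region (the last since $n\le N$ forces $q^n>q^{N+1}$, while $\b q^n\le\b q<1$). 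Lemma~\ref{Brez} then gives quasi-orthogonality of order one and Lemma~\ref{location} places at least $n-1$ distinct zeros in $\big(1,q^{-N}\big)$. Iterating~\eqref{eq1qh} with $\a$ replaced successively by $\frac{\a}{q},\frac{\a}{q^2},\dots$ expresses $\tilde{Q}_n\big(\bar{x};\frac{\a}{q^k},\b,N|q\big)$ as a combination of $\tilde{Q}_n,\tilde{Q}_{n-1},\dots,\tilde{Q}_{n-k}$ in the unshifted $\a,\b,N$ sequence, so Lemma~\ref{Brez} yields order exactly $k$ and Lemma~\ref{location} the count of $n-k$ interior zeros.

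Parts~(ii) and~(iii) are the discrete counterparts of Theorem~\ref{Thmbj}(ii) and~(iv). For~(ii) I would fix $\b>1$ with $0<\b q<1$ and repeat the argument using~\eqref{eq2qh}, checking that its coefficient of $\tilde{Q}_{n-1}$ does not vanish in the region. For~(iii), equation~\eqref{eq7qh} writes $\tilde{Q}_n\big(\bar{x};\frac{\a}{q},\frac{\b}{q},N|q\big)$ as a three-term combination of $\tilde{Q}_n$, $\tilde{Q}_{n-1}$ and $\tilde{Q}_{n-2}$ with a nonzero coefficient on $\tilde{Q}_{n-2}$, giving order two; iterating (equivalently, applying~\eqref{eq1qh} $k$ times and~\eqref{eq2qh} $m$ times) produces $\tilde{Q}_n\big(\bar{x};\frac{\a}{q^k},\frac{\b}{q^m},N|q\big)$ as a combination of $\tilde{Q}_n,\dots,\tilde{Q}_{n-(k+m)}$, and Lemmas~\ref{Brez} and~\ref{location} finish the argument.

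The step I expect to be most delicate is the bookkeeping that the deepest coefficient $a_{n,k}$ (respectively $a_{n,k+m}$) survives the iteration nonzero, since Lemma~\ref{Brez} certifies order exactly $k$ only when $a_{n,0}a_{n,k}\neq0$. I would verify this by noting that at each parameter shift the leading coefficient is $1$ and the deepest coefficient is a product of the single-step coefficients; because the running degree $n-j$ stays $\le N$, every factor of the form $q^{n-j}-q^{N+1}$ is nonzero, and the remaining factors $q^{n-j}-1$ and $\b q^{n-j}-1$ are sign-definite throughout $0<\a q,\b q<1$, so no factor in the product vanishes and the deepest coefficient is genuinely nonzero.
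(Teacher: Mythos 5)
Your proposal follows essentially the same route as the paper's own proof: the paper likewise derives part (i) from \eqref{eq1qh} with Lemmas~\ref{Brez} and~\ref{location} plus iteration, part (ii) from \eqref{eq2qh}, and part (iii) from \eqref{eq7qh} followed by iteration to order $k+m$. The only difference is that you explicitly verify the nonvanishing of the single-step and deepest coefficients and the applicability of Lemma~\ref{Brez} to the discrete weight --- points the paper leaves implicit --- which is added care rather than a different argument.
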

\begin{proof} (i) Fix $\a>1$, $\b\in\rr$, such that $0<\alpha q<1$, $0<\beta q<1$. From Lemma~\ref{Brez} and~\eqref{eq1qh}, it follows that $\tilde{Q}_{n}\big(\bar{x};\frac{\alpha}{q},\beta ,N|q\big)$ is quasi-orthogonal of order one on $\big(1,q^{-N}\big)$. From Lemma~\ref{location} we know that at least $(n-1)$ zeros of $\tilde{Q}_{n}\big(\bar{x};\frac{\alpha}{q},\beta ,N|q\big)$ lie in the interval $\big(1,q^{-N}\big)$. By iteration, we can express $\tilde{Q}_{n}\big(\bar{x};\frac{\alpha}{q^k},\beta ,N|q\big)$ as a linear combination of $\tilde{Q}_{n}(\bar{x};{\alpha},\beta ,N|q)$, $\tilde{Q}_{n-1}(\bar{x};\alpha,\beta ,N|q),\dots$, $\tilde{Q}_{n-k}(\bar{x};\alpha,\beta ,N|q)$, and the results follow from Lemmas~\ref{Brez} and~\ref{location}.

(ii) Fix $\b>1$, $\a\in\rr$, such that $0<\alpha q<1$, $0<\beta q<1$. The quasi-orthogonality follows in the same way as in~(i), by using~\eqref{eq2qh}.

(iii) Fix $\a>1$ and $\b>1$ such that $0<\alpha q<1$ and $0< \beta q<1$. From~\eqref{eq7qh}, we see that $\tilde{Q}_{n}\big(\bar{x};\frac{\alpha}{q},\frac{\beta}{q} ,N|q\big)$ can be written as a linear combination of $\tilde{Q}_{n}(\bar{x};{\alpha},\beta ,N|q)$, $\tilde{Q}_{n-1}(\bar{x};\alpha,\beta ,N|q)$ and $\tilde{Q}_{n-2}(\bar{x};{\alpha},\beta ,N|q)$ and it follows from Lemma~\ref{Brez} that the sequence $\tilde{Q}_{n}\big(\bar{x};\frac{\a}{q},\frac{\beta}{q},\gamma;q\big)$ is quasi-orthogonal of order two on $\big(1,q^{-N}\big)$. By iteration, we can express $\tilde{Q}_{n}\big(\bar{x};\frac{\alpha}{q^k},\frac{\beta}{q^m} ,N|q\big)$ as a~linear combination of $\tilde{Q}_{n}(\bar{x};{\alpha},\beta ,N|q)$, $\tilde{Q}_{n-1}(\bar{x};{\alpha},\beta ,N|q)$, $\dots$, $\tilde{Q}_{n-(k+m)}(\bar{x};{\alpha},\beta ,N|q)$, and the result follows directly from Lemma~\ref{Brez}. It follows from Lemma~\ref{location} that at least $n-(k+m)$ zeros of $\tilde{Q}_{n}\big(\bar{x};\frac{\a}{q^k},\frac{\beta}{q^m},\gamma;q\big)$ lie in the interval $\big(1,q^{-N}\big)$.
\end{proof}

\begin{Theorem}\label{Thmqhalpha}
Let $N\in\nn_0$, $n\in\{0,1,2,\ldots,N\}$, $\a,\b\in \rr$, $0<\a q,\b q<1$, and let $x_{n,j}$, $j\in\{1,2,\dots,n\}$, denote the zeros of $\tilde{Q}_{n}(\bar{x};\alpha,\beta ,N|q)$, $y_{n,j}$, $j\in\{1,2,\dots,n\}$, the zeros of $\tilde{Q}_{n}\big(\bar{x};\frac{\alpha}{q},\beta ,N|q\big)$ and $z_{n,j}$, $j\in\{1,2,\dots,n\}$, the zeros of $\tilde{Q}_{n}\big(\bar{x};\a,\frac{\b}{q},N|q\big)$. Then
\begin{itemize}\itemsep=0pt
\item[$(i)$] if $\a>1$, $y_{n,1}<1<x_{n,1}<x_{n-1,1}<y_{n,2}<x_{n,2}<\cdots<x_{n-1,n-1}<y_{n,n}<x_{n,n}<q^{-N}$;
\item[$(ii)$] if $\b>1$, $1<x_{n,1}<z_{n,1}<x_{n-1,1}<x_{n,2}<z_{n,2}<\cdots<x_{n-1,n-1}<x_{n,n}<q^{-N}<z_{n,n}$.
\end{itemize}
\end{Theorem}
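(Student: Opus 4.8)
\section*{Proof proposal}

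The plan is to handle (i) and (ii) by the same template used for Theorem~\ref{Th}, treating the mixed recurrences \eqref{eq1qh} and \eqref{eq2qh} as instances of the hypothesis $Q_{n,1}(x)=P_n(x)+a_nP_{n-1}(x)$ of Lemmas~\ref{Joulak1} and~\ref{Joulak2}, with $P_n=\tilde Q_n(\bar x;\alpha,\beta,N|q)$ and $a_n$ the displayed coefficient of $\tilde Q_{n-1}$. For each part I would first fix the sign of $a_n$, then apply Lemma~\ref{Joulak2} to obtain the global interlacing of the zeros of $P_n$ and $P_{n-1}$ with those of the quasi-orthogonal polynomial, and finally apply Lemma~\ref{Joulak1} to decide on which side of the relevant endpoint the extreme quasi-zero lies. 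Since $\tilde Q_n(\bar x;\alpha,\beta,N|q)$ is orthogonal on $(1,q^{-N})$, its zeros satisfy $1<x_{n,1}$ and $x_{n,n}<q^{-N}$, and these inner inequalities close up both chains.

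For (i) I read $a_n$ off \eqref{eq1qh}. With $0<q<1$, $0<\alpha q,\beta q<1$ and $1\le n\le N$, the factors $q^n-1$, $\beta q^n-1$ and $\alpha\beta q^{2n}-1$ are negative, $q^n-q^{N+1}$ is positive, and $\alpha\beta q^{2n}-q$ is negative (at least for $n\ge2$); collecting these gives $a_n>0$, so Lemma~\ref{Joulak2}(ii) yields $y_{n,1}<x_{n,1}<x_{n-1,1}<y_{n,2}<\cdots<y_{n,n}<x_{n,n}$. To place $y_{n,1}$ below the left endpoint $\bar x=1$ I would use Lemma~\ref{Joulak1}(i). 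At $\bar x=1$ the upper parameter of the defining ${}_3\phi_2$ equals $1$, so $(1;q)_m=0$ for $m\ge1$ and the series collapses to its $m=0$ term; hence $f_n(1)=\tilde Q_n(1;\alpha,\beta,N|q)/\tilde Q_{n-1}(1;\alpha,\beta,N|q)$ is a ratio of the $q$-Pochhammer normalisation constants, which I would simplify to a short product of $q$-shifted linear factors. Checking that this product lies in $(-a_n,0)$ then gives $y_{n,1}<1$.

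For (ii) I read $a_n$ off \eqref{eq2qh}; the analogous factor analysis, now incorporating $\alpha q^n-1<0$ and the overall minus sign, gives $a_n<0$, so Lemma~\ref{Joulak2}(i) produces $x_{n,1}<z_{n,1}<x_{n-1,1}<\cdots<x_{n,n}<z_{n,n}$. To push $z_{n,n}$ beyond the right endpoint I would evaluate $f_n$ at $\bar x=q^{-N}$, where the upper parameter $q^{-N}$ cancels the lower parameter $q^{-N}$ and the ${}_3\phi_2$ reduces to ${}_2\phi_1(q^{-n},\alpha\beta q^{n+1};\alpha q;q,q)$, to which the $q$-Vandermonde identity \eqref{vdM} applies with $b=\alpha\beta q^{n+1}$ and $c=\alpha q$. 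Simplifying the resulting closed form for $f_n(q^{-N})$ and verifying $-a_n>f_n(q^{-N})>0$ then gives $z_{n,n}>q^{-N}$ through Lemma~\ref{Joulak1}(ii).

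The main effort, as in Theorem~\ref{Th}, is the sign bookkeeping rather than any new idea: $a_n$, the endpoint value $f_n$, and the combination $-a_n-f_n(\cdot)$ are each products or differences of products of several $q$-shifted linear factors, and the conclusion depends on these signs being constant over the admissible range of $n$. The factor to watch is $\alpha\beta q^{2n}-q$, whose sign is only unambiguous for $n\ge2$. The one genuinely $q$-Hahn-specific feature is the asymmetry of the two endpoints: at $\bar x=q^{-N}$ a lower parameter is cancelled and \eqref{vdM} is used, whereas at $\bar x=1$ the series terminates outright, so the two endpoint evaluations proceed by different mechanisms.
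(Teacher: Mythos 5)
Your proposal is correct and follows essentially the same route as the paper: the same coefficients read off \eqref{eq1qh} and \eqref{eq2qh}, the same sign analysis feeding Lemma~\ref{Joulak2}(ii) and~\ref{Joulak2}(i), and the same endpoint mechanisms feeding Lemma~\ref{Joulak1} (collapse of the $_3\phi_2$ at $\bar{x}=1$ since $(1;q)_m=0$ for $m\ge1$, and cancellation of the $q^{-N}$ parameters plus the $q$-Vandermonde identity \eqref{vdM} at $\bar{x}=q^{-N}$). One small point in your favour: the condition you state for part (ii), $-a_n>f_n\big(q^{-N}\big)>0$, is exactly what Lemma~\ref{Joulak1}(ii) requires and what the computation actually yields, whereas the paper's printed claim $-a_n-f_n\big(q^{-N}\big)<0$ is a sign slip (its own displayed expression for that difference is positive under the stated parameter restrictions).
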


\begin{proof} (i) From \eqref{eq1qh} we obtain the value
\begin{gather*}
a_n={\frac {\alpha \left( {q}^{n}-1 \right) \left(\beta{q}^{n}-1 \right) \left( {q}^{n}-{q}^{N+1} \right) }{ {q}^{N}\left( \alpha\beta{q}^{2n}-1 \right) \left(\alpha\beta{q}^{2n}-q \right) }},
\end{gather*} which is positive when we take into consideration the values of the parameters. The interlacing result, from which we can deduce the location of $y_{n,n}$, follows from Lemma~\ref{Joulak2}(ii).

In order to prove that $y_{n,1}$ does not lie in the interval of orthogonality, i.e., $y_{n,1}<1$, we use the fact that
\begin{gather*}
\qhypergeom{3}{2}{q^{-n},\alpha\beta q^{n+1},1}{\alpha q,q^{-N}}{q;q}=\qhypergeom{3}{2}{q^{-n},\alpha\beta q^{n+1},q^{-0}}{\alpha q,q^{-N}}{q;q}=1
\end{gather*} and Lemma \ref{Joulak1}.
Consider
\begin{gather*} f_n(1)=\frac{\tilde{Q}_{n}(1;\alpha,\beta ,N|q)}{\tilde{Q}_{n-1}(1;\alpha,\beta,N|q)}
=-{\frac { \left( \alpha{q}^{n}-1 \right) \left( \alpha\beta{q}^
{n}-1 \right) \left({q}^{n}-{q}^{N+1} \right) }{ \left( \alpha\beta{q}^{2n}-1 \right) \left(\alpha\beta{q}^{2n}-q \right) }},
\end{gather*} which is negative for the appropriate parameter values. We thus have
\begin{gather*} -a_n-f_n(1)={\frac { \left( \alpha-1 \right) \left( {q}^{n}-{q}^{N+1}\right) }{
 \left( \alpha\beta {q}^{2n} -q \right) {q}^{N}}}<0,
 \end{gather*}
i.e., $-a_n<f_n(1)<0$, and the result follows from Lemma~\ref{Joulak1}(i).

(ii) From \eqref{eq2qh} we obtain
\begin{gather*}
a_n=-\frac {\alpha\beta \left( {q}^{n}
-{q}^{N+1} \right) \left(\alpha {q}^{n} -1\right) \left( {q}^{n}-1 \right){q}^{n} }{ \left(\alpha \beta {q}^{2n}-1 \right) \left(\alpha\beta {q}^{2n}-q \right) {q}^{N} },\end{gather*} which is negative. The interlacing result, from which we can deduce the location of $z_{n,1}$, follows from Lemma~\ref{Joulak2}(i).

The polynomial $\tilde{Q}_{n}(\bar{x};{\alpha},\beta ,N|q)$ evaluated at $\bar{x}=q^{-N}$, can be written in terms of a $_2 \phi_1$-hypergeometric function. We apply (\ref{vdM}), and simplify, to obtain
\begin{gather*} f_n\big(q^{-N}\big)=\frac{\tilde{Q}_{n}\left(q^{-N};\alpha,\beta ,N|q\right)}{\tilde{Q}_{n-1}\left(q^{-N};\alpha,\beta ,N|q\right)}= -{\frac {\a \left( \beta {q}^{n}-1 \right) \left( \alpha
\beta {q}^{n}-1 \right) \left( -{q}^{N+1}+{q}^{n} \right) q^n}{
 \left( \alpha\beta {q}^{2n}-q \right) \left( \alpha\beta {q}
^{2n}-1 \right)q^N }}.\end{gather*}
 When taking into consideration the values of the parameters,
\begin{gather*} -a_n- f_n\big(q^{-N}\big)=-{\frac {\a \left( \beta-1 \right) \left( {q}^{n}-{q}^{N+1}
 \right){q}^{n} }{(\alpha\beta {q}^{2n}-q)q^N}}<0
 \end{gather*} and the result follows from Lemma~\ref{Joulak1}(ii).
\end{proof}

\begin{Remark}We cannot say anything about the location of the zeros of $\tilde{Q}_{n}\big(\bar{x};\frac{\alpha}{q^2},\beta ,N|q\big)$, since the coefficient of $\tilde{Q}_{n-2}(\bar{x};{\alpha},\beta ,N|q)$, in the equation
\begin{gather*}
 \tilde{Q}_n\left(\bar{x};\frac{\alpha}{q^2},\beta ,N|q\right)=\tilde{Q}_{n}(\bar{x};\alpha,\beta ,N|q)\\
 {} +\frac {\alpha \left( q+1 \right)
\left( {q}^{n}-1 \right) \left( \beta{q}^{n}-1 \right) \left({q}^{n}-{q}^{N+1} \right)}{ \left( \alpha\beta{q
}^{2n}-{q}^{2} \right) \left( \alpha\beta{q}^{2n}-1 \right) {q}^{N}}\tilde{Q}_{n-1}(\bar{x};\alpha,\beta ,N|q) \\
 {} +\frac {(\alpha q)^{2} \left( {q}^{n}-1 \right) \left( \beta{q}^{n}-q \right) \left({q}^{n}-{q}^{N+1} \right) \left( {q}^{n}-q \right) \left( \beta{q}^{n}-1 \right) \left({q}^{n} -{q}^{N+2} \right) }{ \left( \alpha\beta{q}^{2n}-{q}^{2} \right) ^{2} \left(\alpha\beta{q}^{2n}-q \right) \left( \alpha\beta{q}^{2n}-{q}^{3} \right){q}^{2 N} }\tilde{Q}_{n-2}(\bar{x};\alpha,\beta ,N|q),
\end{gather*}
that can be obtained from \eqref{eq1qh}, is positive (cf.\ \cite[Theorem~4]{BDR}). The same is true for the location of the zeros of $\tilde{Q}_{n}\big(\bar{x};\a,\frac{\b}{q^2} ,N|q\big)$ and the equation can be found in the accompanying Maple file.
\end{Remark}

\begin{Theorem}\label{Thmqh4} Let $N\in\nn_0$, $n\in\{0,1,2,\ldots,N\}$, $\a,\b>1$. All the zeros of $\tilde{Q}_{n}\big(\bar{x};\frac{\alpha}{q},\frac{\beta}{q} ,N|q\big)$ are real and distinct and if $z_{n,j}$, $j\in\{1,2,\dots,n\}$, are the zeros of $\tilde{Q}_{n}\big(\bar{x};\frac{\alpha}{q},\frac{\beta}{q} ,N|q\big)$, then $z_{n,1}<1$ and $q^{-N}<z_{n,n}$.
 \end{Theorem}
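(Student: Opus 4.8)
The plan is to follow the template of the proof of Theorem~\ref{Th}(iv), since \eqref{eq7qh} already expresses the order-two quasi-orthogonal polynomial in the required three-term form. First I would read off from \eqref{eq7qh} the coefficients $a_n$ (of $\tilde{Q}_{n-1}$) and $b_n$ (of $\tilde{Q}_{n-2}$), so that
\begin{gather*}
\tilde{Q}_n\big(\bar{x};\tfrac{\alpha}{q},\tfrac{\beta}{q},N|q\big)=\tilde{Q}_n(\bar{x};\alpha,\beta,N|q)+a_n\tilde{Q}_{n-1}(\bar{x};\alpha,\beta,N|q)+b_n\tilde{Q}_{n-2}(\bar{x};\alpha,\beta,N|q).
\end{gather*}

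For the reality and simplicity of the zeros I would determine the sign of $b_n$ and invoke \cite[Theorem~4]{BDR}, exactly as in Theorem~\ref{Th}(iv). With $\alpha,\beta>1$ and $0<\alpha q,\beta q<1$, every factor in the numerator of $b_n$ has a fixed sign, and the sign of the whole expression reduces to that of $(\alpha\beta q^{2n}-q)(\alpha\beta q^{2n}-q^3)$. In the genuinely order-two range $n\ge 3$ one has $\alpha\beta q^{2n}<q^{2n-2}\le q^4<q^3$, so both factors are negative and $b_n<0$; hence \cite[Theorem~4]{BDR} gives $n$ real, distinct zeros (the cases $n\le 2$ being checked directly).

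To locate the extreme zeros I would use \cite[Theorem~9]{Joulak_2005}, forming at an endpoint $e\in\{1,q^{-N}\}$ the quantity
\begin{gather*}
g_n(e)=f_n(e)f_{n-1}(e)+a_nf_{n-1}(e)+b_n=\frac{\tilde{Q}_n\big(e;\frac{\alpha}{q},\frac{\beta}{q},N|q\big)}{\tilde{Q}_{n-2}(e;\alpha,\beta,N|q)},\qquad f_n(\bar{x})=\frac{\tilde{Q}_n(\bar{x};\alpha,\beta,N|q)}{\tilde{Q}_{n-1}(\bar{x};\alpha,\beta,N|q)}.
\end{gather*}
Since $\tilde{Q}_{n-2}(\cdot;\alpha,\beta,N|q)$ has all its zeros inside $(1,q^{-N})$, a negative value of $g_n(e)$ forces a zero of the quasi-orthogonal polynomial to lie beyond $e$. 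At the left endpoint $\bar{x}=1$ I would substitute the value of $f_n(1)$ already computed in the proof of Theorem~\ref{Thmqhalpha}(i) (using that the defining $_3\phi_2$ terminates to $1$ there), together with its $n\to n-1$ analogue, and simplify $g_n(1)$ to show it is negative, giving $z_{n,1}<1$. At the right endpoint $\bar{x}=q^{-N}$ I would use $f_n\big(q^{-N}\big)$ from the proof of Theorem~\ref{Thmqhalpha}(ii) (obtained via the $q$-Vandermonde identity~\eqref{vdM}) and its $n\to n-1$ analogue, and simplify $g_n\big(q^{-N}\big)$ to show it is negative, giving $q^{-N}<z_{n,n}$.

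The main obstacle is the sign analysis rather than anything conceptual. The expressions for $g_n(1)$ and $g_n\big(q^{-N}\big)$ are bulky rational functions of $\alpha,\beta,q^n,q^N$, and the crux is that---unlike the big $q$-Jacobi case, where $g_n$ at the right endpoint $\alpha q$ changed sign and the location of $v_{n,n}$ could not be settled---here \emph{both} reduce to expressions of a single, determinate sign throughout the admissible region. Carrying out these two reductions (most efficiently with the accompanying computer-algebra computation) is the heart of the proof; once they are established the three assertions follow at once from \cite[Theorem~4]{BDR} and \cite[Theorem~9]{Joulak_2005}.
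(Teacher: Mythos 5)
Your proposal follows the paper's proof essentially step for step: the same decomposition \eqref{eq7qh}, the sign of $b_n$ combined with \cite[Theorem~4]{BDR} for reality and distinctness of the zeros, and the negativity of $f_n(e)f_{n-1}(e)+a_nf_{n-1}(e)+b_n$ at $e=1$ and $e=q^{-N}$ via \cite[Theorem~9]{Joulak_2005} for the location of the extreme zeros; the paper's displayed simplifications confirm that both endpoint expressions are indeed negative throughout the admissible parameter region, exactly as you predict. Your explicit reduction of the sign of $b_n$ to the denominator factors $(\alpha\beta q^{2n}-q)(\alpha\beta q^{2n}-q^{3})$ for $n\ge 3$, with small $n$ checked separately, is if anything slightly more careful than the paper's bare assertion that $b_n<0$.
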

\begin{proof} Fix $\a>1$ and $\b>1$ such that $0<\alpha q<1$ and $0< \beta q<1$. We use~\eqref{eq7qh} with $a_n$ the coefficient of $\tilde{Q}_{n-1}(\bar{x};{\alpha},\beta ,N|q)$ and $b_n$ the coefficient of $\tilde{Q}_{n-2}(\bar{x};{\alpha},\beta ,N|q)$. By taking into account the values of the parameters, we see that
\begin{gather*}
b_n= -{\frac { {\alpha}^{2}\b \left( \beta {q}^{n}-q \right) \left( {q}^{n}-{q}^{
N+1} \right) \left( {q}^{n}-{q}^{N+2} \right) \left( \alpha
 {q}^{n}-q \right) \left( {q}^{n}-1 \right) \left( {q}^
{n}-q \right){q}^{n+1} }{ \left( \alpha\beta {q}^{2n}-q \right)
 \left( \alpha\beta {q}^{2n}-{q}^{3} \right) \left( \alpha \beta {q}^{2n}-{q}^{2} \right) ^{2}{q}^{2 N}}}<0,
 \end{gather*}
and it follows from \cite[Theorem~4]{BDR} that $z_{n,j}$, $j\in\{1,2,\dots,n\}$, are real.

In order to determine the location of $z_{n,1}$ and $z_{n,n}$, we use \cite[Theorem~9]{Joulak_2005}. Since
\begin{gather*}f_n(1) f_{n-1}(1)+a_nf_{n-1}(1)+b_n = \frac{\tilde{Q}_{n}(1;\alpha,\beta ,N|q)}{\tilde{Q}_{n-2}(1;\alpha,\beta ,N|q)}+a_n\frac{\tilde{Q}_{n-1}(1;\alpha,\beta ,N|q)}{\tilde{Q}_{n-2}(1;\alpha,\beta ,N|q)}+b_n\\
\hphantom{f_n(1) f_{n-1}(1)+a_nf_{n-1}(1)+b_n}{} = {\frac { \left( \alpha-1 \right) \left( \alpha {q}^{n}-q \right) \left({q}^{n}-q^{N+2} \right) \left({q}^{n}-q^{N+1} \right) q}{ \left( \alpha\beta {q}^{2n} -{q}^{2
} \right) \left( \alpha\beta {q}^{2n} -{q}^{3}\right)
 {q}^{2N}}} < 0,
\end{gather*}
it follows that $z_{n,1}<1$. Furthermore,
\begin{gather*}
f_n\big(q^{-N}\big)f_{n-1}\big(q^{-N}\big)+a_nf_{n-1}\big(q^{-N}\big)+b_n\\
\qquad{} ={\frac { {\alpha}^{2}
 \left( \beta-1 \right) \left( \beta {q}^{n}-q \right) \left(
{q}^{n}-{q}^{N+1} \right) \left({q}^{n} -{q}^{N+2}\right){q}^{n} }{ \left( \alpha\beta {q}^{2n} -{q}^{2
} \right) \left( \alpha\beta {q}^{2n} -{q}^{3}\right){q}^{2N}}}<0
\end{gather*}
and $q^{-N}<z_{n,n}$.
\end{proof}

\begin{Remark}\quad
\begin{itemize}\itemsep=0pt
\item [(i)]
When we let $\b=0$ in the definition of the $q$-Hahn polynomials, we obtain the affine $q$-Krawtchouk polynomials \cite[Section~14.16]{KLS} $\tilde{K}_n^{\text{Aff}}(\bar{x};\a,N;q)$, orthogonal on $\big(1,q^{-N}\big)$ if $0<\a q<1$. When we fix $\a>1$, such that $0<\a q<1$, the quasi-orthogonality of the polynomials $\tilde{K}_n^{\text{Aff}}\big(\bar{x};\frac{\a}{q^k},N;q\big)$, $k<n$, on $\big(1,q^{-N}\big)$ follows directly from~\eqref{eq7qh}, with $\b=0$. If $x_{n,j}$, $j\in\{1,2,\dots,n\}$, denote the zeros of $\tilde{K}_n^{\text{Aff}}(\bar{x};\a,N;q)$ and $y_{n,j}$, $j\in\{1,2,\dots,n\}$, the zeros of $\tilde{K}_n^{\text{Aff}}\big(\bar{x};\frac{\a}{q},N;q\big)$, the interlacing result in Theorem~\ref{Thmqhalpha}(i) follows.
\item [(ii)]
Since $\lim\limits_{\a\rightarrow \infty}\tilde{Q}_n(\overline{x};\alpha,p,N|q)=\tilde{K}_n^{\text{qtm}}(\overline{x};p,N;q)$, \cite[Section~14.14]{KLS}, we obtain from~\eqref{eq2qh}, the equation
\begin{gather*}
\tilde{K}_n^{\text{qtm}}\left(\overline{x};\frac{p}{q},N;q \right)= \tilde{K}_n^{\text{qtm}}(\overline{x};p,N;q) +{\frac { \left( {q}^{N+1}-{q}^{n} \right) \left( {q}^{n}-1 \right) }{p {q}^{2n+N}}\tilde{K}_{n-1}^{\text{qtm}}(\overline{x};p,N;q)}.
\end{gather*}
For $q^{-N}<p<q^{-N+1}$, the quantum $q$-Krawtchouk polynomials $\tilde{K}_n^{\text{qtm}}\big(\overline{x};\frac{p}{q^k},N;q\big)$ are quasi-orthogonal of order $k<n$
and the interlacing result in Theorem~\ref{Thmqhalpha}(ii) follows, where $x_{n,j}$, $j\in\{1,2,\dots,n\}$, denote the zeros of $\tilde{K}_n^{\text{qtm}}(\overline{x};p,N;q)$ and $z_{n,j}$, $j\in\{1,2,\dots,n\}$, the zeros of $\tilde{K}_n^{\text{qtm}}\big(\overline{x};\frac{p}{q},N;q\big)$.
\end{itemize}
\end{Remark}

\subsection[The little $q$-Jacobi polynomials]{The little $\boldsymbol{q}$-Jacobi polynomials}
The little $q$-Jacobi polynomials
 \begin{gather*}\tilde{p}_n(x;\alpha,\beta|q)=(-1)^nq^{\binom{n}{2}}\frac{(\alpha q;q)_n}{(\alpha\beta qq^n;q)_n}\,\qhypergeom{2}{1}{q^{-n},\alpha\beta q^{n+1}}{\alpha q}{q;qx}\end{gather*} are orthogonal with respect to the discrete weight $w(x)=\frac{(\b q;q)_x (\a q)^x}{(q;q)_x}$ for $0<\alpha q<1$, $\beta q<1$ on $(0,1)$. Consider the recurrence equations (cf.~\cite[equations~(9a) and (9b)]{KoepfJT_2017})
\begin{subequations}
 \begin{gather}
\tilde{p}_n\left(x;\frac{\alpha}{q},\beta|q\right)= \tilde{p}_n(x;\alpha ,\beta|q)+\frac {\alpha {q}^{n} \left( {q}^{n}-1 \right) \left( \beta {q}^{n
}-1 \right) }{ \left( \alpha\beta {q}^{2n}-1 \right) \left( \alpha\beta {q}^{2n}-q \right) } \tilde{p}_{n-1}(x;\alpha,\beta|q);\label{eq1lqj} \\
\tilde{p}_n\left(x;\alpha,\frac{\beta}{q}|q \right)=\tilde{p}_n(x;\alpha,\beta |q) -{\frac {\alpha\beta {q}^{2n} \left({q}^{n}-1 \right) \left(\alpha {q}^{n} -1 \right) }{ \left( \alpha\beta {q}^{2n}-1 \right) \left( \alpha\beta {q}^{2n}-q \right) }\tilde{p}_{n-1}(x;\alpha,\beta |q)}.\label{eq2lqj}
 \end{gather}
\end{subequations}
\begin{Corollary}
 \begin{gather}
 \tilde{p}_n\left(x;\frac{\alpha}{q},\frac{\beta}{q} |q \right)=\tilde{p}_{n}(x;\alpha,\beta |q) \nonumber\\
 \hphantom{\tilde{p}_n\left(x;\frac{\alpha}{q},\frac{\beta}{q} |q \right)=}{} -\frac {\alpha {q}^{n} \left( \alpha\beta {q}^{2n}-{q}^{n+1}\beta-\beta {q}^{n}+q \right) \left( {q}^{n}-1 \right)}{ \left( \alpha
\beta {q}^{2n}-{q}^{2} \right) \left( \alpha\beta {q}^{2n}-1 \right) }\tilde{p}_{n-1}(x;\alpha,\beta |q)\nonumber\\
 \hphantom{\tilde{p}_n\left(x;\frac{\alpha}{q},\frac{\beta}{q} |q \right)=}{} -\frac {{\alpha}^{2}\beta{q}^{3 n+1} \left( {q}^{n}-1 \right) \left( \beta {q}^{n}-q \right) \left( \alpha {q}^{n}-q \right) \left( {q}^{n}-q \right)}{ \left( \alpha\beta {q}^{2n}-{q}^{2} \right) ^{2} \left( \alpha\beta {q}^{2n}-q \right) \left( \alpha\beta {q}^{2n}-{q}^{3} \right) }\tilde{p}_{n-2}(x;\alpha,\beta |q).\label{eq8lqj}
 \end{gather}
\end{Corollary}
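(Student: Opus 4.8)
The plan is to follow the template used to establish the previous corollary, equation~\eqref{eq6bqj}, since the two little $q$-Jacobi relations \eqref{eq1lqj} and~\eqref{eq2lqj} play exactly the roles that \eqref{eq1bqj} and~\eqref{eq3bqj} played there. First I would take \eqref{eq1lqj} and perform the parameter shift $\b\mapsto\frac{\b}{q}$ throughout. Under this substitution $\b q^n\mapsto\b q^{n-1}$ and $\a\b q^{2n}\mapsto\a\b q^{2n-1}$, so the left-hand side becomes $\tilde{p}_n\big(x;\frac{\a}{q},\frac{\b}{q}|q\big)$ while the right-hand side becomes $\tilde{p}_n\big(x;\a,\frac{\b}{q}|q\big)$ plus a coefficient times $\tilde{p}_{n-1}\big(x;\a,\frac{\b}{q}|q\big)$. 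Denote that coefficient by $A_n=\frac{\a q^n(q^n-1)(\b q^{n-1}-1)}{(\a\b q^{2n-1}-1)(\a\b q^{2n-1}-q)}$.

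The second step is to re-expand the two shifted polynomials $\tilde{p}_n\big(x;\a,\frac{\b}{q}|q\big)$ and $\tilde{p}_{n-1}\big(x;\a,\frac{\b}{q}|q\big)$ in the base sequence $\{\tilde{p}_m(x;\a,\b|q)\}$ using~\eqref{eq2lqj}, once with index $n$ and once with index $n-1$. Writing $B_n=-\frac{\a\b q^{2n}(q^n-1)(\a q^n-1)}{(\a\b q^{2n}-1)(\a\b q^{2n}-q)}$ for the coefficient appearing in~\eqref{eq2lqj}, the substitution gives $\tilde{p}_n\big(x;\a,\frac{\b}{q}|q\big)=\tilde{p}_n(x;\a,\b|q)+B_n\tilde{p}_{n-1}(x;\a,\b|q)$ and $\tilde{p}_{n-1}\big(x;\a,\frac{\b}{q}|q\big)=\tilde{p}_{n-1}(x;\a,\b|q)+B_{n-1}\tilde{p}_{n-2}(x;\a,\b|q)$. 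Collecting terms then yields a combination of $\tilde{p}_n,\tilde{p}_{n-1},\tilde{p}_{n-2}$ at $(x;\a,\b|q)$ in which the coefficient of $\tilde{p}_n$ is~$1$, the coefficient of $\tilde{p}_{n-1}$ is $A_n+B_n$, and the coefficient of $\tilde{p}_{n-2}$ is the product $A_nB_{n-1}$.

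The remaining, and only nontrivial, work is the rational-function simplification that identifies these two coefficients with the ones displayed in~\eqref{eq8lqj}. The factored coefficient $A_nB_{n-1}$ is the easier of the two: after rewriting each denominator factor $\a\b q^{2n-j}-q^i$ as $q^{-j}(\a\b q^{2n}-q^{i+j})$ and each factor of the form $\xi q^{n-1}-1$ as $q^{-1}(\xi q^n-q)$, the powers of $q$ and the denominator $(\a\b q^{2n}-q^2)^2(\a\b q^{2n}-q)(\a\b q^{2n}-q^3)$ assemble directly into the stated form. The main obstacle is the coefficient $A_n+B_n$: here one must place the two fractions over the common denominator $(\a\b q^{2n}-q)(\a\b q^{2n}-q^2)(\a\b q^{2n}-1)$, factor $\a(q^n-1)$ out of the numerator, and verify that the surviving bracket equals $-q^n(\a\b q^{2n}-\b q^{n+1}-\b q^n+q)(\a\b q^{2n}-q)$, so that the spurious factor $(\a\b q^{2n}-q)$ cancels and the quadratic-in-parameters factor $(\a\b q^{2n}-\b q^{n+1}-\b q^n+q)$ of~\eqref{eq8lqj} emerges. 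This last verification is a direct expansion—both sides reduce to $-\a^2\b^2 q^{5n}+\a\b^2 q^{4n}(q+1)-\b q^{2n+1}(1+q)+q^{n+2}$—and, as elsewhere in the paper, it is most safely carried out with the accompanying computer-algebra routines.
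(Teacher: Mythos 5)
Your proposal is correct and matches the paper's method: the paper proves the analogous big $q$-Jacobi corollary~\eqref{eq6bqj} by exactly this substitution scheme (shift $\b\mapsto\frac{\b}{q}$ in the $\a$-shift relation, then eliminate the two $\b$-shifted polynomials via the $\b$-shift relation at indices $n$ and $n-1$), and~\eqref{eq8lqj} is implicitly obtained the same way from~\eqref{eq1lqj} and~\eqref{eq2lqj}. Your coefficient bookkeeping checks out: $A_nB_{n-1}$ assembles into the stated $\tilde{p}_{n-2}$-coefficient, and the numerator of $A_n+B_n$ indeed factors as $-q^n\big(\a\b q^{2n}-\b q^{n+1}-\b q^n+q\big)\big(\a\b q^{2n}-q\big)$, cancelling the extra denominator factor.
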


\begin{Theorem}\label{Thmqolqj}
Let $k,m\in\nn_0$, $\a,\b\in \rr$. For $0<\a q<1$ and $0< \beta q<1$, the sequence of little $q$-Jacobi polynomials
\begin{itemize}\itemsep=0pt
\item[$(i)$] $\big\{\tilde{p}_n\big(x;\frac{\alpha}{q^k},\b |q\big)\big\}_{n=0}^{\infty}$, with $\a>1$, is quasi-orthogonal of order $k\le n-1$ with respect to~$w(x)$ on the interval $(0,1)$ and the polynomials have at least $(n-k)$ real, distinct zeros in~$(0,1)$;
\item[$(ii)$] $\big\{\tilde{p}_n\big(x;\a,\frac{\beta}{q^m} |q\big)\big\}_{n=0}^{\infty}$, $\b>1$, is quasi-orthogonal of order $m \le n-1$ with respect to~$w(x)$ on~$(0,1)$ and the polynomials have at least $(n-m)$ real, distinct zeros in $(0,1)$;
\item[$(iii)$]
$\big\{\tilde{p}_n\big(x;\frac{\alpha}{q^k},\frac{\beta}{q^m} |q\big)\big\}_{n=0}^{\infty}$, $\a,\b>1$, is quasi-orthogonal of order $k+m \le n-1$ with respect to $w(x)$ on $(0,1)$ and the polynomials have at least $n-(k+m)$ real, distinct zeros in $(0,1)$.
\end{itemize}
\end{Theorem}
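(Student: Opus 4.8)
The plan is to follow the template already used for Theorems~\ref{Thmbj} and~\ref{Thmqhqo}, since the single-shift relations~\eqref{eq1lqj}, \eqref{eq2lqj} and the double-shift relation~\eqref{eq8lqj} for the little $q$-Jacobi polynomials have exactly the same structure as the corresponding big $q$-Jacobi and $q$-Hahn equations. The only genuine ingredients are Lemma~\ref{Brez} (the characterization of quasi-orthogonality by a linear combination with nonvanishing extreme coefficients) and Lemma~\ref{location} (which guarantees the zeros).

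For part~(i), I would fix $\a>1$ with $0<\a q<1$, so that the shifted parameter satisfies $\frac{\a}{q}>1$ and thus $\frac{\a}{q}\cdot q=\a>1$ leaves the orthogonality region. Equation~\eqref{eq1lqj} writes $\tilde{p}_n\big(x;\frac{\a}{q},\b|q\big)$ as $\tilde{p}_n(x;\a,\b|q)$ plus a nonzero multiple of $\tilde{p}_{n-1}(x;\a,\b|q)$, the coefficient being nonzero for $n\ge1$ because none of the factors $q^n-1$, $\b q^n-1$ vanishes under the stated restrictions (for $n\ge1$ one has $\b q^n\le\b q<1$). Lemma~\ref{Brez} then gives quasi-orthogonality of order one. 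Iterating the $q^{-1}$-shift $k$ times expresses $\tilde{p}_n\big(x;\frac{\a}{q^k},\b|q\big)$ as a linear combination of $\tilde{p}_n(x;\a,\b|q),\dots,\tilde{p}_{n-k}(x;\a,\b|q)$ with leading coefficient $a_{n,0}=1$ and extreme coefficient $a_{n,k}\neq0$, so Lemma~\ref{Brez} yields order $k$ and Lemma~\ref{location} delivers at least $n-k$ distinct real zeros in $(0,1)$.

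For part~(ii) I would repeat this argument verbatim, fixing $\b>1$ with $0<\b q<1$ and using the $\b$-shift equation~\eqref{eq2lqj} in place of~\eqref{eq1lqj}. For part~(iii) I would fix $\a,\b>1$ with $0<\a q,\b q<1$ and start from~\eqref{eq8lqj}, which writes $\tilde{p}_n\big(x;\frac{\a}{q},\frac{\b}{q}|q\big)$ as a linear combination of $\tilde{p}_n$, $\tilde{p}_{n-1}$, $\tilde{p}_{n-2}$ in the $(\a,\b)$-sequence; Lemma~\ref{Brez} then gives order two. Iterating $k$ shifts in $\a$ and $m$ shifts in $\b$ produces $\tilde{p}_n\big(x;\frac{\a}{q^k},\frac{\b}{q^m}|q\big)$ as a combination reaching down to $\tilde{p}_{n-(k+m)}$, so Lemmas~\ref{Brez} and~\ref{location} give order $k+m$ and at least $n-(k+m)$ distinct real zeros in $(0,1)$.

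The one point requiring care, and the main obstacle in principle, is verifying that the extreme coefficient $a_{n,k}$ (respectively $a_{n,k+m}$) never vanishes for admissible parameters, so that the hypothesis $a_{n,0}a_{n,k}\neq0$ of Lemma~\ref{Brez} truly holds after iteration. This is settled by inspecting the explicit product forms of the coefficients in~\eqref{eq1lqj}, \eqref{eq2lqj} and~\eqref{eq8lqj}: each numerator factor of the form $q^n-1$, $\b q^n-1$, $\a q^n-q$, $\b q^n-q$, $q^n-q$ is bounded away from zero for $n\ge1$ under $0<\a q,\b q<1$, and the denominators are the same nonvanishing expressions that appear in the already-established relations, so no cancellation destroys the required nonvanishing of the first and last coefficients.
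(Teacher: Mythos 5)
Your proposal is correct and follows essentially the same route as the paper's own proof: single shifts via~\eqref{eq1lqj} and~\eqref{eq2lqj}, the double shift via~\eqref{eq8lqj}, iteration of the parameter shifts, and then Lemmas~\ref{Brez} and~\ref{location} to conclude quasi-orthogonality and the zero count. Your explicit verification that the extreme coefficients never vanish under the stated parameter restrictions is a detail the paper leaves implicit, but it does not change the argument.
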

\begin{proof}(i) Fix $\a>1$, $\b\in\rr$, such that $0<\alpha q<1$, $0<\beta q<1$. From Lemma~\ref{Brez} and~\eqref{eq1lqj}, it follows that $\tilde{p}_n\big(x;\frac{\alpha}{q},\b |q\big)$ is quasi-orthogonal of order one on $(0,1)$. By iteration, we can express $\tilde{p}_n\big(x;\frac{\alpha}{q^k},\b |q\big)$ as a linear combination of $\tilde{p}_n(x;\alpha,\b |q)$, $\tilde{p}_{n-1}(x;\alpha,\b |q)$, $\dots$, $\tilde{p}_{n-k}(x;\alpha,\b |q)$, and the result follows from Lemma~\ref{Brez}. The location of the $(n-k)$ real, distinct zeros of $\tilde{p}_n(x;\frac{\alpha}{q^k},\b |q)$, $k\in\{1,2,\dots,n-1\}$, follows from Lemma~\ref{location}.

(ii) Fix $\b>1$, $\a\in\rr$, such that $0<\alpha q<1$, $0<\beta q<1$. The quasi-orthogonality follows in the same way as in~(i), by using~\eqref{eq2lqj}.

(iii) Fix $\a>1$ and $\b>1$ such that $0<\alpha q<1$ and $0< \beta q<1$. From~\eqref{eq8lqj}, we see that $\tilde{p}_n\big(x;\frac{\alpha}{q},\frac{\beta}{q} |q\big)$ can be written as a linear combination of $\tilde{p}_n(x;\alpha,\b |q)$, $\tilde{p}_{n-1}(x;\alpha,\b |q)$ and $\tilde{p}_{n-2}(x;\alpha,\b |q)$, and it follows from Lemma~\ref{Brez} that the sequence $\tilde{p}_n\big(x;\frac{\alpha}{q},\frac{\beta}{q} |q\big)$ is quasi-orthogonal of order two on $(0,1)$. By iteration, we can express $\tilde{p}_n\big(x;\frac{\alpha}{q^k},\frac{\beta}{q^m} |q\big)$ as a linear combination of $\tilde{p}_n(x;\alpha,\b |q),\tilde{p}_{n-1}(x;\alpha,\b |q),\dots,\tilde{p}_{n-(k+m)}(x;\alpha,\b |q)$, and the results follow directly from Lemmas~\ref{Brez} and~\ref{location}.
\end{proof}

\begin{Theorem}\label{Thmlqjalpha}
Let $\a,\b\in \rr$, $0<\a q,\beta q<1$, and suppose $x_{n,j}$, $j\in\{1,2,\dots,n\}$, denote the zeros of $\tilde{p}_n(x;\alpha,\b |q)$, $y_{n,j}$, $j\in\{1,2,\dots,n\}$, the zeros of $\tilde{p}_n\big(x;\frac{\alpha}{q},\b |q\big)$ and $z_{n,j}$, $j\in\{1,2,\dots,n\}$, the zeros of $\tilde{p}_n\big(x;\a,\frac{\b}{q}|q\big)$. Then
\begin{itemize}\itemsep=0pt
\item[$(i)$] if $\a>1$, $y_{n,1}<0<x_{n,1}<x_{n-1,1}<y_{n,2}<\cdots<x_{n-1,n-1}<y_{n,n}<x_{n,n}<1$;
\item[$(ii)$] if $\b>1$, $0<x_{n,1}<z_{n,1}<x_{n-1,1}<x_{n,2}<\cdots<x_{n-1,n-1}<x_{n,n}<1<z_{n,n}$.
\end{itemize}
\end{Theorem}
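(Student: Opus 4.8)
The plan is to mirror the proofs of Theorems~\ref{Th} and~\ref{Thmqhalpha}: each single parameter $q$-shift realizes the perturbed polynomial as an order-one quasi-orthogonal combination $\tilde{p}_n(x;\cdot|q)+a_n\tilde{p}_{n-1}(x;\a,\b|q)$, and the sign of the one coefficient $a_n$, combined with the value of $f_n$ at an endpoint of $(0,1)$, pins down both the interlacing of zeros and the location of the escaping extreme zero. So I would read off $a_n$ from \eqref{eq1lqj} and \eqref{eq2lqj}, fix its sign, apply Lemma~\ref{Joulak2} for the interlacing, evaluate $\tilde{p}_n$ at the relevant endpoint, and finish with Lemma~\ref{Joulak1}.

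For part~(i), \eqref{eq1lqj} gives
\begin{gather*}
a_n=\frac{\a q^n(q^n-1)(\b q^n-1)}{(\a\b q^{2n}-1)(\a\b q^{2n}-q)}.
\end{gather*}
The numerator is positive, since $q^n<1$ and $\b q^n\le\b q<1$; in the denominator $\a\b q^{2n}-1<0$ because $(\a q)(\b q)<1$, and $\a\b q^{2n}-q<0$ because for $n\ge2$ one has $\a\b q^{2n}\le\a\b q^{4}=(\a q)(\b q)q^{2}<q$. Hence $a_n>0$, and Lemma~\ref{Joulak2}(ii) yields the interlacing $y_{n,1}<x_{n,1}<x_{n-1,1}<\cdots<y_{n,n}<x_{n,n}$. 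To place $y_{n,1}$ relative to the left endpoint $0$, I would evaluate at $x=0$: the $_2\phi_1$ collapses to $1$, so $\tilde{p}_n(0;\a,\b|q)=(-1)^nq^{\binom{n}{2}}(\a q;q)_n/(\a\b q^{n+1};q)_n$, which gives a closed form for $f_n(0)$ of sign $-1$. I then simplify $-a_n-f_n(0)$, expecting the common denominators to cancel and leave a manifestly negative quantity, so that $-a_n<f_n(0)<0$ and Lemma~\ref{Joulak1}(i) gives $y_{n,1}<0$.

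For part~(ii) I would read off $a_n$ from \eqref{eq2lqj}; with $\b>1$ the same denominator analysis, together with the sign of $\a\b q^{2n}(q^n-1)(\a q^n-1)$, forces $a_n<0$, so Lemma~\ref{Joulak2}(i) produces $x_{n,1}<z_{n,1}<x_{n-1,1}<\cdots<x_{n,n}<z_{n,n}$. To place $z_{n,n}$ relative to the right endpoint $1$, I would evaluate at $x=1$, where the argument of the $_2\phi_1$ becomes $q$ and the $q$-Vandermonde identity \eqref{vdM} (with $b=\a\b q^{n+1}$, $c=\a q$) sums it in closed form, yielding $f_n(1)>0$. Verifying $-a_n>f_n(1)>0$ and invoking Lemma~\ref{Joulak1}(ii) then gives $z_{n,n}>1$; combining with $0<x_{n,1}$ from the orthogonality of $\tilde{p}_n(x;\a,\b|q)$ closes both chains.

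The main obstacle is the sign bookkeeping rather than anything conceptual. The delicate point is the factor $\a\b q^{2n}-q$: it is \emph{not} negative under $0<\a q,\b q<1$ alone (it can be positive when $n=1$), and one must use $n\ge2$ together with $(\a q)(\b q)<1$ to force $\a\b q^{2n}<q$; the degenerate linear case $n=1$ is handled directly from Lemma~\ref{Joulak1}. The second delicate step is the cancellation in $-a_n-f_n(0)$ and $-a_n-f_n(1)$, where the denominators $(\a\b q^{2n}-1)(\a\b q^{2n}-q)$ must drop out so that the resulting expressions have a transparent sign; this is precisely where the terminating evaluation at $x=0$ and the $q$-Vandermonde summation \eqref{vdM} at $x=1$ are indispensable.
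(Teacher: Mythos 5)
Your proposal is correct and follows essentially the same route as the paper's proof: read the order-one coefficients $a_n$ off \eqref{eq1lqj} and \eqref{eq2lqj}, deduce the interlacing from their signs via Lemma~\ref{Joulak2}, and locate the extreme zero by endpoint evaluation --- the ${}_2\phi_1$ collapsing to $1$ at $x=0$ in case~(i), the $q$-Vandermonde sum \eqref{vdM} at $x=1$ in case~(ii) --- followed by Lemma~\ref{Joulak1}. The cancellations you anticipate do occur, giving $-a_n-f_n(0)=\frac{q^n(\a-1)}{\a\b q^{2n}-q}<0$ and $-a_n-f_n(1)=-\frac{\a(\b-1)q^{2n}}{\a\b q^{2n}-q}>0$ for $n\ge 2$, exactly as in the paper.

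One small correction: your side remark that the degenerate case $n=1$ can be handled directly from Lemma~\ref{Joulak1} is not right, and cannot be repaired, because for $n=1$ conclusion~(i) can fail outright: taking $q=1/10$ and $\a=\b=5$ (so $0<\a q,\b q<1$ and $\a>1$) gives $y_{1,1}=\frac{1-\a}{1-\a\b q}=\frac{8}{3}>0$. This is harmless for the theorem, which only has content for $n\ge2$ (order-one quasi-orthogonality requires $n>k=1$, and the interlacing chains involve $x_{n-1,1}$); your observation that the factor $\a\b q^{2n}-q$ needs $n\ge2$ to be negative is exactly the right delicate point, and it is used tacitly, without comment, in the paper's own sign assertions.
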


\begin{proof}(i) From \eqref{eq1lqj} we obtain the value
\begin{gather*} a_n={\frac { \a\left( {q}^{n}-1 \right) \left( \beta {q}^{n}-1 \right) {q}
^{n}}{ \left( \alpha\beta {q}^{2n}-1 \right) \left( \alpha
 \beta {q}^{2n}-q \right) }}>0.
 \end{gather*} The interlacing result, as well as the position of $y_{n,n}$, follows from Lemma~\ref{Joulak2}(ii).

To obtain the position of $y_{n,1}$ we use Lemma~\ref{Joulak1} and when we consider the given parameter values, \begin{gather*} f_n(0)=\frac{\tilde{p}_n(0;\alpha,\b |q)}{\tilde{p}_{n-1}(0;\alpha,\b |q)}=-{\frac { \left( \alpha\beta {q}^{n}-1 \right) \left( \alpha {q}^
{n}-1 \right) {q}^{n}}{ \left( \alpha\beta {q}^{2n}-1 \right)
 \left( \alpha\beta {q}^{2n}-q \right) }}<0.
 \end{gather*}
We thus have
\begin{gather*} -a_n-f_n(0)={\frac {{q}^{n} ( \alpha-1 ) }{\alpha\beta {q}^{2n}-q}<0
}\end{gather*}
 and the result follows from Lemma~\ref{Joulak1}(i).

(ii) From \eqref{eq2lqj}, we obtain the value
\begin{gather*} a_n=-{\frac {\a\b \left( {q}^{n}-1 \right) \left( \alpha {q}^{n}-1 \right)
{q}^{2n}}{ \left( \alpha\beta {q}^{2n}-1
 \right) \left( \alpha\beta {q}^{2n}-q \right) }}
<0.\end{gather*} The interlacing result, as well as the position of~$z_{n,1}$, follows from Lemma~\ref{Joulak2}(i).

To obtain the position of $z_{n,n}$, we use Lemma~\ref{Joulak1}, and when we consider the given parameter values,
\begin{gather*}
f_n(1)=\frac{\tilde{p}_n(1;\alpha,\b |q)}{\tilde{p}_{n-1}(1;\alpha,\b |q)}=-{\frac {\alpha \left( \beta-1 \right) {q}^{2n}}{\alpha\beta {q }^{2n}-q}}
>0.\end{gather*}
We thus have
\begin{gather*} -a_n-f_n(1)=-{\frac {\alpha \left( \beta-1 \right) {q}^{2n}}{\alpha\beta {q
}^{2n}-q}}>0\end{gather*}
 and it follows from Lemma \ref{Joulak1}(ii) that $1<z_{n,n}$.
\end{proof}

\begin{Theorem}\label{Thmqh4+}
Let $\a,\b>1$. All the zeros of $\tilde{p}_n\big(x;\frac{\a}{q},\frac{\b}{q} |q\big)$, denoted by $z_{n,j}$, $j\in\{1,2,\dots,n\}$, are real and distinct and $z_{n,1}<0$ and $1<z_{n,n}$.
 \end{Theorem}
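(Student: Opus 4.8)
The plan is to mirror the proofs of Theorem~\ref{Th}(iv) and Theorem~\ref{Thmqh4}, replacing the big $q$-Jacobi / $q$-Hahn data by the little $q$-Jacobi data. First I would read off from \eqref{eq8lqj} the order-two representation
\begin{gather*}
\tilde{p}_n\big(x;\tfrac{\alpha}{q},\tfrac{\beta}{q} |q\big)=\tilde{p}_n(x;\alpha,\beta|q)+a_n\,\tilde{p}_{n-1}(x;\alpha,\beta|q)+b_n\,\tilde{p}_{n-2}(x;\alpha,\beta|q),
\end{gather*}
with $a_n$ the coefficient of $\tilde{p}_{n-1}(x;\alpha,\beta|q)$ and $b_n$ the coefficient of $\tilde{p}_{n-2}(x;\alpha,\beta|q)$. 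For $\alpha,\beta>1$ together with the standing restriction $0<\alpha q,\beta q<1$ (so $1<\alpha,\beta<q^{-1}$), the base family $\tilde{p}_n(x;\alpha,\beta|q)$ is orthogonal on $(0,1)$, so this exhibits the shifted polynomial as a quasi-orthogonal polynomial of order two, as already recorded in Theorem~\ref{Thmqolqj}(iii).

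To obtain reality and simplicity of the zeros I would verify that $b_n<0$ for $n\ge2$ and then invoke \cite[Theorem~4]{BDR}. From \eqref{eq8lqj},
\begin{gather*}
b_n=-\frac{\alpha^2\beta q^{3n+1}(q^n-1)(\beta q^n-q)(\alpha q^n-q)(q^n-q)}{(\alpha\beta q^{2n}-q^2)^2(\alpha\beta q^{2n}-q)(\alpha\beta q^{2n}-q^3)},
\end{gather*}
and I would settle the sign factor by factor: for $n\ge2$ the factors $q^n-1$ and $q^n-q$ are negative, while $\beta q^n-q=q(\beta q^{n-1}-1)$ and $\alpha q^n-q=q(\alpha q^{n-1}-1)$ are negative because $\alpha q^{n-1},\beta q^{n-1}\le\alpha q,\beta q<1$, and the quadratic-in-$q^n$ denominator factors are likewise controlled by $\alpha q^n,\beta q^n<1$; collecting these gives $b_n<0$.

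For the location of the extreme zeros I would use \cite[Theorem~9]{Joulak_2005}, exactly as in Theorem~\ref{Th}(iv) and Theorem~\ref{Thmqh4}, now with endpoints $0$ and $1$. The needed ratios $f_n(x)=\tilde{p}_n(x;\alpha,\beta|q)/\tilde{p}_{n-1}(x;\alpha,\beta|q)$ were already computed in the proof of Theorem~\ref{Thmlqjalpha}, namely $f_n(0)=-\frac{(\alpha\beta q^n-1)(\alpha q^n-1)q^n}{(\alpha\beta q^{2n}-1)(\alpha\beta q^{2n}-q)}$ and $f_n(1)=-\frac{\alpha(\beta-1)q^{2n}}{\alpha\beta q^{2n}-q}$. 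I would then form the two combinations $f_n(0)f_{n-1}(0)+a_nf_{n-1}(0)+b_n$ and $f_n(1)f_{n-1}(1)+a_nf_{n-1}(1)+b_n$, simplify each to a single rational function of $q^n$, and check that both are strictly negative on the admissible region. By \cite[Theorem~9]{Joulak_2005}, negativity at $x=0$ forces $z_{n,1}<0$ and negativity at $x=1$ forces $1<z_{n,n}$, which is the assertion.

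The main obstacle is the sign bookkeeping rather than any conceptual difficulty: $b_n$ and the two quadratic combinations are rational functions whose numerators and denominators factor into terms of the form $\alpha q^n-c$, $\beta q^n-c$ and $\alpha\beta q^{2n}-c$, and each must be compared against the relevant power of $q$ over the region $0<q<1$, $1<\alpha,\beta<q^{-1}$, $n\ge2$ on which the order-two quasi-orthogonality holds. As with the other families treated in this paper, I expect to carry out these simplifications with the accompanying Maple routines; the condition $n\ge2$ is moreover what guarantees $q^n-q\neq0$, so that $b_n\neq0$ and the order-two quasi-orthogonality is genuine.
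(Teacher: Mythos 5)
Your proposal is correct and follows essentially the same route as the paper's own proof: the same reading of \eqref{eq8lqj} as an order-two combination, the sign check $b_n<0$ feeding into \cite[Theorem~4]{BDR} for reality and simplicity of the zeros, and then \cite[Theorem~9]{Joulak_2005} applied at the endpoints $0$ and $1$ via the combinations $f_n(0)f_{n-1}(0)+a_nf_{n-1}(0)+b_n$ and $f_n(1)f_{n-1}(1)+a_nf_{n-1}(1)+b_n$, using the ratios already computed for Theorem~\ref{Thmlqjalpha}. The only differences are cosmetic: the paper records the simplified rational expressions for these two combinations explicitly, whereas you defer that algebra to computer verification, and your conclusion $z_{n,1}<0$ at the left endpoint is the correct one (the paper's proof contains a slip, writing $z_{n,1}<1$).
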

\begin{proof} Fix $\a>1$ and $\b>1$ such that $0<\alpha q<1$ and $0< \beta q<1$. We use~\eqref{eq8lqj}, with $a_n$ the coefficient of $\tilde{p}_{n-1}(x;\alpha,\b |q)$ and $b_n$ the coefficient of $\tilde{p}_{n-2}(x;\alpha,\b |q)$. By taking into account the values of the parameters, we see that
\begin{gather*}
b_n= -{\frac {{\alpha}^{2} \b \left( \beta {q}^{n}-q \right) \left( \alpha
{q}^{n}-q \right) \left( {q}^{n}-1 \right) \left( {q}^{n
}-q \right){q}^{3 n+1}}{ \left( \alpha\beta {q}^{2n}-q \right)
 \left( \alpha\beta {q}^{2n}-{q}^{3} \right) \left( \alpha
\beta {q}^{2n}-{q}^{2} \right) ^{2}}}<0,
\end{gather*}
and it follows from \cite[Theorem~4]{BDR} that $z_{n,j}$, $j\in\{1,2,\dots,n\}$, are real and distinct.

In order to determine the location of $z_{n,1}$ and $z_{n,n}$, we use \cite[Theorem~9]{Joulak_2005}. Since
\begin{gather*}
f_n(0) f_{n-1}(0)+a_nf_{n-1}(0)+b_n={\frac { \left( \alpha-1 \right) \left(
\alpha {q}^{n}-q \right) q^{2n+1}}{ \left( \alpha\beta {q}^{2n}
 -{q}^{3} \right) \left( \alpha\beta {q}^{2n}-{q}^{2} \right) }}<0,
\end{gather*}
 it follows that $z_{n,1}<1$. Furthermore,
\begin{gather*}
f_n(1)f_{n-1}(1)+a_nf_{n-1}(1)+b_n
={\frac { {\alpha}^{2} \left( \beta-1 \right) \left( \beta {
q}^{n}-q \right){q}^{3 n} }{ \left( \alpha\beta {q}^{2n}-{q}^{2} \right)
 \left( \alpha\beta {q}^{2n}-{q}^{3} \right) }}<0,
\end{gather*}
and $1<z_{n,n}$.
\end{proof}

\begin{Remark}\quad
\begin{itemize}\itemsep=0pt
\item[(i)]
From \eqref{eq1lqj} we obtain
\begin{gather*}
\tilde{p}_n\left(x;\frac{\alpha}{q^2},\beta|q\right)= \tilde{p}_{n}(x;\alpha,\beta |q)+\frac { \alpha {q}^{n}\left( q+1 \right) \left( {q}^{n}-1 \right) \left( \beta {q}^{n}-1 \right)}{ \left( \alpha\beta {q}^{2n}-{q}^{2} \right) \left( \alpha\beta {q}^{2n}-1 \right) }\\
\hphantom{\tilde{p}_n\left(x;\frac{\alpha}{q^2},\beta|q\right)=}{}\times \tilde{p}_{n-1}(x;\alpha,\beta |q)
+b_n \tilde{p}_{n-2}(x;\alpha,\beta |q),
\end{gather*}
with
\begin{gather*}
b_n=\frac { {\alpha}^{2} {q}^{2n+2} \left( {q}^{n}-1 \right) \left( \beta {q}^{n}-q \right) \left( {q}^{n}-q \right)\left( \beta {q}^{n}-1 \right) }{ \left( \alpha\beta {q}^{2n}-{q}^{2} \right) ^{2} \left( \alpha\beta {q}^{2n}-q \right) \left( \alpha\beta {q}^{2n}-{q}^{3} \right) }
\end{gather*}
and
\begin{gather*} C_n-b_n={\frac {{q}^{2n+1} \left( \alpha-q \right) \left( {q}^{n}-q
 \right) \left( \b{q}^{n}-q \right) \alpha}{ \left(\a \b {q}^{2n}
-{q}^{3} \right) \left(\a \b {q}^{2n}-{q}^{2}
 \right) ^{2}}},
 \end{gather*} where $-C_n$ is the coefficient of $\tilde{p}_{n-2}(x;\alpha,\b |q)$ in the three-term recurrence equation of the little $q$-Jacobi polynomials \cite[equation~(14.12.4)]{KLS}.
Since $C_n<b_n$, there is an interlacing between $(n-2)$ zeros of $\tilde{p}_n\big(x;\frac{\alpha}{q^2},\beta|q\big)$ and the $(n-1)$ zeros of $\tilde{p}_{n-1}(x;\alpha,\b |q)$ (cf.\ \cite[Theorem~15]{Joulak_2005}).
\item[(ii)]
 When $\b=0$ in the definition of the little $q$-Jacobi polynomials, we obtain the little $q$-Laguerre (or Wall) polynomials $\tilde{p}_n(x;\a|q) $, that are orthogonal on $(0,1)$ when $0<\a q<1$. The quasi-orthogonality of $\big\{\tilde{p}_n\big(x;\frac{\a}{q^k}|q\big)\big\}_{n\ge0}$, for $k<n$, when $\a>1$, $0<\a q<1$, follows directly from \eqref{eq1lqj} (with $\b=0$). The location of the zeros of the order one quasi-orthogonal polynomial $\tilde{p}_n\big(x;\frac{\a}{q}|q\big)$ is given in Theorem~\ref{Thmlqjalpha}(i), where
$x_{n,j}$, $j\in\{1,2,\dots,n\}$, denote the zeros of $\tilde{p}_n(x;\alpha |q)$ and $y_{n,j}$, $j\in\{1,2,\dots,n\}$, the zeros of $\tilde{p}_{n}\big(x;\frac{\alpha}{q}|q\big)$.
\end{itemize}
\end{Remark}

\subsection[The $q$-Laguerre polynomials]{The $\boldsymbol{q}$-Laguerre polynomials}
The $q$-Laguerre polynomials
 \begin{gather*}\tilde{L}_n^{(\alpha)}(x;q)={(-1)^n(q^{\alpha+1};q)_n\over q^{n(n+\alpha)}}\,\qhypergeom{1}{1}{q^{-n}}{q^{\alpha+1}}{q;-q^{n+\alpha+1}x}\end{gather*} are orthogonal for $\alpha>-1$ on $(0,\infty)$ with respect to the weight function $w(x)=\frac{x^{\a}}{(-x;q)_{\infty}}$. Consider the equation (cf.\ \cite[equation~(4.12)]{Moak} and \cite[equation~(12a)]{KoepfJT_2017})
\begin{gather}\label{eq1ql}
 \tilde{L}_n^{(\alpha-1)}(x;q)= \tilde{L}_n^{(\alpha)}(x;q) -\frac{\left( {q}^{n}-1 \right)q}{{q}^{2n+\alpha}} \tilde{L}_{n-1}^{(\alpha)}(x;q).
\end{gather}

\begin{Theorem}\label{Thmqolqj+}
Let $k\in\nn_0$ and $\a\in \rr$. For $-1<\a<0$ and $k\in\{1,2,\dots,n-1\}$, the sequence of $q$-Laguerre polynomials $\big\{\tilde{L}_n^{(\alpha-k)}(x;q)\big\}_{n=0}^{\infty}$ is quasi-orthogonal of order~$k$ on the interval $(0,\infty)$ with respect to~$w(x)$ and the polynomials have at least~$(n-k)$ real, distinct zeros in $(0,\infty)$.
\end{Theorem}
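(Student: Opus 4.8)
The plan is to mirror the argument already used for the big $q$-Jacobi polynomials in Theorem~\ref{Thmbj}(i), the $q$-Hahn polynomials in Theorem~\ref{Thmqhqo}(i) and the little $q$-Jacobi polynomials in Theorem~\ref{Thmqolqj}(i): I would express $\tilde{L}_n^{(\alpha-k)}(x;q)$ as a linear combination of the orthogonal polynomials $\tilde{L}_n^{(\alpha)}(x;q),\tilde{L}_{n-1}^{(\alpha)}(x;q),\dots,\tilde{L}_{n-k}^{(\alpha)}(x;q)$ whose leading and trailing coefficients are nonzero, and then appeal to Lemmas~\ref{Brez} and~\ref{location}.

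First I would fix $\alpha$ with $-1<\alpha<0$. Since $\alpha>-1$, the sequence $\big\{\tilde{L}_n^{(\alpha)}(x;q)\big\}_{n\ge0}$ is orthogonal on $(0,\infty)$ with respect to $w(x)$, so it can serve as the base sequence $\{P_n\}$ in Lemma~\ref{Brez}; at the same time $\alpha-k<-1$ for every $k\ge1$, which places the shifted parameter outside the region of orthogonality. From~\eqref{eq1ql} the coefficient of $\tilde{L}_{n-1}^{(\alpha)}(x;q)$ equals $-\frac{(q^n-1)q}{q^{2n+\alpha}}$, which is strictly positive for $0<q<1$ and $n\ge1$ because $q^n-1<0$. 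Hence the right-hand side of~\eqref{eq1ql} is of the form~\eqref{charac_quasi} with $a_{n,0}=1$ and $a_{n,1}\neq0$, and Lemma~\ref{Brez} shows that $\tilde{L}_n^{(\alpha-1)}(x;q)$ is quasi-orthogonal of order one on $(0,\infty)$, with at least $(n-1)$ of its zeros in $(0,\infty)$ by Lemma~\ref{location}.

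The main step is the iteration. Replacing $\alpha$ by $\alpha-j$ in~\eqref{eq1ql} gives $\tilde{L}_n^{(\alpha-j-1)}(x;q)=\tilde{L}_n^{(\alpha-j)}(x;q)+a_n(\alpha-j)\tilde{L}_{n-1}^{(\alpha-j)}(x;q)$, where $a_n(\beta)=-\frac{(q^n-1)q}{q^{2n+\beta}}$. Substituting the expansion already obtained at level $j$ and reading off the coefficient of $\tilde{L}_{n-j-1}^{(\alpha)}(x;q)$, I would prove by induction on $k$ that
\[
\tilde{L}_n^{(\alpha-k)}(x;q)=\sum_{i=0}^{k}a_{n,i}\,\tilde{L}_{n-i}^{(\alpha)}(x;q),\qquad a_{n,0}=1,\qquad a_{n,k}=\prod_{j=0}^{k-1}a_{n-j}(\alpha-k+1+j).
\]
The only point that needs genuine care is to verify that this trailing coefficient does not vanish; but for $0\le j\le k-1\le n-1$ each factor $a_{n-j}(\alpha-k+1+j)$ is strictly positive, so the product is nonzero whenever $k\le n-1$ (which also guarantees that all the polynomials $\tilde{L}_{n-i}^{(\alpha)}$ appearing have nonnegative degree and that $n>k$ as required by Lemma~\ref{Brez}). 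With $a_{n,0}a_{n,k}\neq0$ in hand, Lemma~\ref{Brez} yields quasi-orthogonality of order $k$ on $(0,\infty)$ and Lemma~\ref{location} gives at least $(n-k)$ real, distinct zeros in $(0,\infty)$.

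I do not anticipate a substantive obstacle: the argument is structurally identical to the $q$-Jacobi and $q$-Hahn cases, the only family-specific ingredients being the single parameter-shift relation~\eqref{eq1ql} and the positivity of its coefficient. The sole bookkeeping issue is confirming the non-cancellation of $a_{n,k}$, which the fixed-sign observation settles immediately.
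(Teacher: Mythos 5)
Your proposal is correct and follows essentially the same route as the paper: the paper's proof also fixes $-1<\a<0$, deduces order-one quasi-orthogonality from \eqref{eq1ql} via Lemma~\ref{Brez}, iterates the parameter shift to reach order~$k$, and invokes Lemma~\ref{location} for the zeros. The only difference is that you make explicit the induction and the non-vanishing of the trailing coefficient $a_{n,k}=\prod_{j=0}^{k-1}a_{n-j}(\alpha-k+1+j)$ (each factor being positive), a point the paper leaves implicit in the phrase ``by iteration.''
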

\begin{proof} Fix $-1<\a<0$. From Lemma~\ref{Brez} and~\eqref{eq1ql} it follows that $\tilde{L}_n^{(\alpha-1)}(x;q)$ is quasi-orthogonal of order one on~$(0,\infty)$. By iteration, we can express $\tilde{L}_n^{(\alpha-k)}(x;q)$ as a linear combination of $\tilde{L}_{n}^{(\alpha)}(x;q),\tilde{L}_{n-1}^{(\alpha)}(x;q),\dots,\tilde{L}_{n-k}^{(\alpha)}(x;q)$, and the result follows from Lemma~\ref{Brez}. The location of the $(n-k)$ real, distinct zeros of $\tilde{L}_n^{(\alpha-k)}(x;q)$, $k\in\{1,2,\dots,n-1\}$, follows from Lemma \ref{location}.
\end{proof}

\begin{Theorem}\label{Thmlql}
Let $-1<\a<0$ and denote the zeros of $\tilde{L}_n^{(\alpha)}(x;q)$ by $x_{n,j}$, $j\in\{1,2,\dots,n\}$, and the zeros of $\tilde{L}_n^{(\alpha-1)}(x;q)$ by $y_{n,j}$, $j\in\{1,2,\dots,n\}$. Then
\begin{gather*} y_{n,1}<0<x_{n,1}<x_{n-1,1}<y_{n,2}<x_{n,2}<\cdots<x_{n-1,n-1}<y_{n,n}<x_{n,n}.\end{gather*}
\end{Theorem}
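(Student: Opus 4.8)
The plan is to treat \eqref{eq1ql} as a one-term quasi-orthogonal relation of the form $Q_{n,1}(x)=P_n(x)+a_nP_{n-1}(x)$, with $P_n=\tilde{L}_n^{(\alpha)}(\cdot;q)$, $Q_{n,1}=\tilde{L}_n^{(\alpha-1)}(\cdot;q)$ and
\begin{gather*}
a_n=-\frac{\left(q^n-1\right)q}{q^{2n+\alpha}},
\end{gather*}
and then to argue exactly as in the proof of Theorem~\ref{Thmlqjalpha}, combining Lemmas~\ref{Joulak2} and~\ref{Joulak1}. First I would determine the sign of $a_n$: since $0<q<1$ we have $q^n-1<0$, so $a_n>0$, and Lemma~\ref{Joulak2}(ii) immediately yields the interlacing
\begin{gather*}
y_{n,1}<x_{n,1}<x_{n-1,1}<y_{n,2}<x_{n,2}<\cdots<x_{n-1,n-1}<y_{n,n}<x_{n,n}.
\end{gather*}
This delivers every inequality in the statement except the placement of $y_{n,1}$ relative to the left endpoint $0$ of the interval of orthogonality, together with the trivial fact that $x_{n,1}>0$, which holds because $\tilde{L}_n^{(\alpha)}(\cdot;q)$ is orthogonal on $(0,\infty)$ for $\alpha>-1$.

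To locate $y_{n,1}$ I would apply Lemma~\ref{Joulak1}(i) at the endpoint $a=0$, which calls for the ratio $f_n(0)=\tilde{L}_n^{(\alpha)}(0;q)/\tilde{L}_{n-1}^{(\alpha)}(0;q)$. Evaluating the defining ${}_1\phi_1$ at $x=0$ leaves only its constant term, so $\tilde{L}_n^{(\alpha)}(0;q)=(-1)^n(q^{\alpha+1};q)_n/q^{n(n+\alpha)}$; forming the ratio and simplifying the $q$-Pochhammer and power factors gives
\begin{gather*}
f_n(0)=\left(q^{\alpha+n}-1\right)q^{1-\alpha-2n}<0,
\end{gather*}
the sign following from $\alpha+n>0$. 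The decisive step is then the difference
\begin{gather*}
-a_n-f_n(0)=\left(1-q^{\alpha}\right)q^{1-\alpha-n},
\end{gather*}
which is negative precisely because $0<q<1$ and $\alpha<0$ force $q^{\alpha}>1$. Hence $-a_n<f_n(0)<0$, Lemma~\ref{Joulak1}(i) gives $y_{n,1}<0$, and inserting $y_{n,1}<0<x_{n,1}$ into the interlacing chain completes the proof.

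The only genuinely delicate point is this sign analysis: the sharper hypothesis $-1<\alpha<0$ (rather than the mere orthogonality requirement $\alpha>-1$) is exactly what makes $q^{\alpha}>1$, and hence $-a_n<f_n(0)$ and $y_{n,1}<0$; if $\alpha\ge0$ the inequality would reverse. I note finally that, since the right endpoint is $b=\infty$, there is no boundary against which to test $y_{n,n}$, but none is needed: the bound $y_{n,n}<x_{n,n}$ is already contained in the interlacing supplied by Lemma~\ref{Joulak2}(ii).
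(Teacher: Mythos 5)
Your proposal is correct and follows essentially the same route as the paper's own proof: the sign $a_n>0$ with Lemma~\ref{Joulak2}(ii) for the interlacing, then the evaluation $f_n(0)=\big(q^{n+\alpha}-1\big)q^{1-\alpha-2n}<0$ and the computation $-a_n-f_n(0)=\big(1-q^{\alpha}\big)q^{1-\alpha-n}<0$ (using $-1<\alpha<0$) with Lemma~\ref{Joulak1}(i) to place $y_{n,1}<0$, all of which match the paper's quantities exactly. Your closing remarks on the necessity of $\alpha<0$ and on the absence of a finite right endpoint are correct observations beyond what the paper states, but the argument itself is the same.
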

\begin{proof}
From \eqref{eq1ql}, we obtain the value $a_n={\frac { - \left( {q}^{n}-1 \right) {q}}{q^{2n+\alpha} }}>0$. The interlacing result, as well as the position of $y_{n,n}$, follows from Lemma~\ref{Joulak2}(ii).

To obtain the position of $y_{n,1}$, we use Lemma~\ref{Joulak1}, and when we consider the given parameter values,
\begin{gather*} f_n(0)= \frac{\left( {q}^{n+\alpha}-1 \right)q}{ {q}^{2n+\alpha}}<0.\end{gather*}
We thus have
\begin{gather*} -a_n-f_n(0)=-\frac{\left( {q}^{\alpha}-1 \right)q}{ {q}^{n+\alpha}}<0\end{gather*}
 and since $-a_n<f_n(0)<0$, the result follows from Lemma~\ref{Joulak1}(i).
\end{proof}

\subsection{The Al-Salam--Carlitz I polynomials}
The Al-Salam--Carlitz I polynomials
 \begin{gather*}\tilde{U}_n^{(\alpha)}(x;q)=(-\alpha)^nq^{\binom{n}{2}}\,\qhypergeom{2}{1}{q^{-n},x^{-1}}{0}{q;{qx\over \alpha}}\end{gather*}
are orthogonal for $\a<0$ on $(\a,1)$ with respect to the weight function $w(x)=\big(qx,\frac{qx}{\a};q\big)_{\infty}$. The polynomials $\tilde{U}_n^{(\frac{\a}{q^k})}(x;q)$, $k<n$, are orthogonal with respect to $\big(qx,\frac{q^{k+1}x}{\a};q\big)_{\infty}$ on the interval $\big(\frac{\a}{q^k},1\big)$ and we will prove that they are quasi-orthogonal with respect to $w(x)$ on $(\a,1)$. We use the equation
\begin{gather}
\tilde{U}_n^{(\frac{\a}{q})}(x;q)=\tilde{U}_n^{(\alpha )}(x;q)+\alpha q^{-1}\big(q^n-1\big)\tilde{U}_{n-1}^{(\alpha )}(x;q).\label{eq1asc1}
\end{gather}
We deduce that
\begin{gather}
 \tilde{U}_n^{(\frac{\a}{q^2})}(x;q) =\tilde{U}_n^{(\alpha )}(x;q) +\frac{\alpha \left( {q}^{n}-1 \right) \left( q+1 \right)}{{q}^{2}}\tilde{U}_{n-1}^{(\alpha )}(x;q)\nonumber\\
 \hphantom{\tilde{U}_n^{(\frac{\a}{q^2})}(x;q) =}{} +\frac{{\alpha}^{2} \left( {q}^{n}-1 \right) \left( {q}^{n}-q \right)}{{q}^{4}} \tilde{U}_{n-2}^{(\alpha )}(x;q).\label{eq1asc2}
\end{gather}

\begin{Theorem}\label{Thmqoalsalc1}
Let $k\in\nn_0$ and $\a<0$. The sequence of Al-Salam--Carlitz~I polynomials $\big\{\tilde{U}_n^{(\frac{\a}{q^k})}(x;q)\big\}_{n=0}^{\infty}$ is quasi-orthogonal with respect to $w(x)$ on $(\a,1)$ and the polynomials have at least $(n-k)$ real, distinct zeros in~$(\a,1)$.
\end{Theorem}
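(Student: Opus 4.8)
The plan is to follow exactly the template established for the big $q$-Jacobi, $q$-Hahn and little $q$-Jacobi families in Theorems~\ref{Thmbj}, \ref{Thmqhqo} and~\ref{Thmqolqj}: start from the single parameter-shift identity \eqref{eq1asc1}, bootstrap it into an order-$k$ linear combination of the unshifted polynomials, and then read off quasi-orthogonality of order $k$ and the zero location from Lemmas~\ref{Brez} and~\ref{location}. No analytic input beyond these recurrences is needed; in particular the remark preceding the theorem, that $\tilde{U}_n^{(\frac{\a}{q^k})}(x;q)$ is genuinely orthogonal on the enlarged interval $\big(\frac{\a}{q^k},1\big)$, is only motivation and plays no role in the argument.

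First I would fix $\a<0$ and note that, since $0<q<1$, the coefficient $\alpha q^{-1}\big(q^n-1\big)$ in \eqref{eq1asc1} is nonzero for every $n\ge 1$. Thus \eqref{eq1asc1} writes $\tilde{U}_n^{(\frac{\a}{q})}(x;q)$ as $\tilde{U}_n^{(\alpha)}(x;q)+a_{n,1}\tilde{U}_{n-1}^{(\alpha)}(x;q)$ with $a_{n,0}a_{n,1}\neq0$, and Lemma~\ref{Brez} gives that $\tilde{U}_n^{(\frac{\a}{q})}(x;q)$ is quasi-orthogonal of order one on $(\a,1)$. For the general case I would iterate: replacing $\alpha$ by $\frac{\alpha}{q}$ in \eqref{eq1asc1} and then substituting \eqref{eq1asc1} (with $n$ suitably re-indexed) for the resulting $\frac{\alpha}{q}$-polynomials reproduces \eqref{eq1asc2}, and carrying out this substitution $k$ times expresses $\tilde{U}_n^{(\frac{\a}{q^k})}(x;q)$ as $\sum_{i=0}^{k}a_{n,i}\tilde{U}_{n-i}^{(\alpha)}(x;q)$. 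Here $a_{n,0}=1$, and each application of \eqref{eq1asc1} contributes to the extreme term a factor of the shape $\alpha q^{-j}\big(q^{\,m}-1\big)$, so that $a_{n,k}$ is a product of such factors. With $\alpha<0$ and $q^{\,m}\neq1$ for $m\ge1$, every factor is nonzero, whence $a_{n,0}a_{n,k}\neq0$. Lemma~\ref{Brez} then makes $\tilde{U}_n^{(\frac{\a}{q^k})}(x;q)$ quasi-orthogonal of order $k$ with respect to $w(x)$ on $(\a,1)$, and Lemma~\ref{location} yields at least $(n-k)$ real, distinct zeros in $(\a,1)$.

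The only point requiring genuine care is confirming that the order does not drop, i.e.\ that the trailing coefficient $a_{n,k}$ stays nonzero across the iteration; this is the main obstacle, but it is settled by the product structure just described, which guarantees nonvanishing for all $\alpha<0$ and $0<q<1$. As with the earlier families, an explicit closed form for $a_{n,k}$ could be generated by the \texttt{qMixRec} procedure, but beyond verifying this nonvanishing it is not needed.
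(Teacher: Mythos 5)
Your proposal is correct and follows essentially the same route as the paper's proof: iterate the shift relation \eqref{eq1asc1} to express $\tilde{U}_n^{(\frac{\a}{q^k})}(x;q)$ as a linear combination of $\tilde{U}_{n}^{(\alpha)}(x;q),\dots,\tilde{U}_{n-k}^{(\alpha)}(x;q)$ and then invoke Lemmas~\ref{Brez} and~\ref{location}. In fact you are slightly more careful than the paper, which does not explicitly check that the trailing coefficient $a_{n,k}$ is nonzero; your observation that it is a product of factors $\alpha q^{-j}\big(q^{m}-1\big)$ (arising from the unique lowest-index path in the iteration, so no cancellation can occur) cleanly closes that small gap.
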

\begin{proof} From Lemma~\ref{Brez} and~\eqref{eq1asc1} it follows that $\tilde{U}_n^{(\frac{\a}{q})}(x;q)$ is quasi-orthogonal of order one on $(\a,1)$. By iteration, we can express $\tilde{U}_n^{(\frac{\a}{q^k})}(x;q)$ as a linear combination of $\tilde{U}_{n}^{(\alpha )}(x;q)$, $\tilde{U}_{n-1}^{(\alpha )}(x;q)$, $\dots,\tilde{U}_{n-k}^{(\alpha )}(x;q)$, and the result follows from Lemma~\ref{Brez}. The location of the $(n-k)$ real, distinct zeros of $\tilde{U}_n^{(\frac{\a}{q^k})}(x;q)$, $k\in\{1,2,\dots,n-1\}$, follows from Lemma~\ref{location}.
\end{proof}

\begin{Remark} We can also obtain \eqref{eq1asc1} from the generating function \cite[equation~(14.24.10)]{KLS} of the Al-Salam--Carlitz~I polynomials
\begin{gather}\label{gen_ASC1}
 \frac{\left(t,\alpha t;q\right)_\infty}{(xt;q)_\infty}=\sum_{n=0}^{\infty}\frac{U_n^{(\alpha)}(x;q)}{(q;q)_n}t^n,
\end{gather}
from which it follows that
\begin{gather}\label{ASC1_1}
 \frac{(t,\frac{\alpha}{q^k} t;q)_\infty}{(xt;q)_\infty}=\sum_{n=0}^{\infty} \frac{U_n^{(\frac{\alpha}{q^k})}(x;q)}{(q;q)_n}t^n,\qquad k\in\{1,2,\ldots\}.
\end{gather}
From the relation
\begin{gather*}\frac{(a;q)_\infty}{(a q^n;q)_\infty}=(a;q)_n,\end{gather*}
we obtain, when $a=\frac{\alpha}{q^k}t$ and $n=k$,
\begin{gather*}\left(\frac{\alpha}{q^k}t;q\right)_\infty=\left(\frac{\alpha}{q^k}t;q\right)_k(\alpha t;q)_\infty.\end{gather*}
By using \eqref{gen_ASC1}, \eqref{ASC1_1} becomes
\begin{gather*}
 \left(\frac{\alpha}{q^k}t;q\right)_k\sum_{n=0}^{\infty} \frac{U_n^{(\alpha)}(x;q)}{(q;q)_n}t^n=\sum_{n=0}^{\infty} \frac{U_n^{(\frac{\alpha}{q^k}t)}(x;q)}{(q;q)_n}t^n,\qquad k\in\{1,2,\ldots\}.
\end{gather*}
Expanding $\big(\frac{\alpha}{q^k}t;q\big)_k$ and equating powers of~$t$ yields $U_n^{(\frac{\alpha}{q^k})}(x;q)$ as a linear combination of $U_{n-j}^{(\alpha)}(x;q)$, $j\in\{0,1,\ldots,k\}$. In particular, for $k=1$ and $k=2$, we get~\eqref{eq1asc1} and~\eqref{eq1asc2}, respectively.
 \end{Remark}

\begin{Theorem}\label{Thmlql+}
Let $\a<0$ and denote the zeros of $\tilde{U}_{n}^{(\alpha )}(x;q)$ by $x_{n,j}$, $j\in\{1,2,\dots,n\}$, and the zeros of $\tilde{U}_{n}^{(\frac{\a}{q})}(x;q)$ by $y_{n,j}$, $j\in\{1,2,\dots,n\}$. Then
\begin{itemize}\itemsep=0pt
\item[$(i)$] $\frac{\a}{q}<y_{n,1}<x_{n,1}<x_{n-1,1}<y_{n,2}<\cdots<x_{n-1,n-1}<y_{n,n}<x_{n,n}<1$
 and, additionally, if $\a<\frac{q^n}{q^n-1}$, then $y_{n,1}<\a<x_{n,1}$;
\item[$(ii)$] $(n-2)$ zeros of $\tilde{U}_{n}^{(\frac{\a}{q^2})}(x;q)$ interlace with the $n$ zeros of $\tilde{U}_{n}^{({\a})}(x;q)$ if $\a<\frac{q^{n+1}}{q^n-1}$.
\end{itemize}
\end{Theorem}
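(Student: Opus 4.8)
The plan for (i) is to read the order-one combination off \eqref{eq1asc1}, so that with $P_n=\tilde U_n^{(\alpha)}$, $Q_{n,1}=\tilde U_n^{(\alpha/q)}$ one has $a_n=\alpha q^{-1}(q^n-1)$. Because $\alpha<0$ and $0<q<1$, the factors $\alpha$ and $q^n-1$ are both negative, so $a_n>0$, and Lemma~\ref{Joulak2}(ii) delivers the interlacing chain $y_{n,1}<x_{n,1}<x_{n-1,1}<y_{n,2}<\cdots<x_{n-1,n-1}<y_{n,n}<x_{n,n}$ at once. For the two outer endpoints I would avoid Lemma~\ref{Joulak1} and simply use genuine orthogonality: $\tilde U_n^{(\alpha/q)}$ is orthogonal on $(\alpha/q,1)$ (as noted just before \eqref{eq1asc1}), so $\alpha/q<y_{n,1}$, while $\tilde U_n^{(\alpha)}$ is orthogonal on $(\alpha,1)$, so $x_{n,n}<1$. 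Splicing these two facts onto the chain produces the full displayed inequality.

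For the sharper claim $y_{n,1}<\alpha$ I would invoke Lemma~\ref{Joulak1}(i) with $a=\alpha$, which asks for $-a_n<f_n(\alpha)<0$, where $f_n=\tilde U_n^{(\alpha)}/\tilde U_{n-1}^{(\alpha)}$. The one genuine computation is the endpoint value: putting $x=\alpha$ sends the argument $qx/\alpha$ of the defining $_2\phi_1$ to $q$, and the $c\to0$ degeneration of the $q$-Vandermonde sum \eqref{vdM} collapses the resulting series to $\alpha^{-n}$, whence $\tilde U_n^{(\alpha)}(\alpha;q)=(-1)^n q^{\binom{n}{2}}$ and $f_n(\alpha)=-q^{n-1}<0$. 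Substituting $a_n$ and clearing the negative factors $q$ and $q^n-1$ turns $-a_n<f_n(\alpha)$ into $\alpha<q^n/(q^n-1)$, exactly the stated hypothesis; together with $\alpha<x_{n,1}$ (orthogonality) this yields $y_{n,1}<\alpha<x_{n,1}$.

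For (ii) I would start from the order-two combination \eqref{eq1asc2}, reading off $Q_{n,2}=\tilde U_n^{(\alpha/q^2)}=\tilde U_n^{(\alpha)}+a_n\tilde U_{n-1}^{(\alpha)}+b_n\tilde U_{n-2}^{(\alpha)}$ with $b_n=\alpha^2(q^n-1)(q^n-q)/q^4$. Here $b_n>0$, so the reality test \cite[Theorem~4]{BDR} (which requires a negative $\tilde U_{n-2}^{(\alpha)}$-coefficient) is unavailable; instead reality and simplicity come for free, since $\alpha/q^2<0$ makes $\tilde U_n^{(\alpha/q^2)}$ a genuine orthogonal polynomial on $(\alpha/q^2,1)$. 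The interlacing of $n-2$ of its zeros with the $n$ zeros of $\tilde U_n^{(\alpha)}$ I would then obtain from the criterion \cite[Theorem~15]{Joulak_2005}, exactly as in the little $q$-Jacobi remark above, whose hypothesis is that $b_n$ exceed the coefficient $C_n$ of $\tilde U_{n-2}^{(\alpha)}$ in the monic three-term recurrence of $\tilde U_n^{(\alpha)}$.

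The decisive step is therefore the sign of $b_n-C_n$. Reading $C_n=\alpha q^{n-2}(q^{n-1}-1)>0$ off \cite[Section~14.24]{KLS} and using $q^n-q=q(q^{n-1}-1)$, I would factor
\[
b_n-C_n=\frac{\alpha(q^{n-1}-1)\bigl[\alpha(q^n-1)-q^{n+1}\bigr]}{q^3}.
\]
Since $\alpha(q^{n-1}-1)>0$, the sign of $b_n-C_n$ is that of $\alpha(q^n-1)-q^{n+1}$, and dividing by the negative factor $q^n-1$ shows $b_n>C_n$ to be equivalent to $\alpha<q^{n+1}/(q^n-1)$, the stated range. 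I expect the hard part to be twofold: justifying the $c\to0$ degeneration of \eqref{vdM} used for the endpoint value in (i) (the $_2\phi_1$ has a vanishing lower parameter, so the $q$-Vandermonde must be applied as a limit), and matching the hypothesis of the interlacing theorem in (ii) to this sign computation while tracking the direction of every inequality reversed by the negative quantities $\alpha$, $q^n-1$ and $q^{n-1}-1$.
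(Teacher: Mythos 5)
Your proposal is correct and follows essentially the same route as the paper: part (i) via $a_n=\alpha q^{-1}(q^n-1)>0$, Lemma~\ref{Joulak2}(ii), and Lemma~\ref{Joulak1}(i) with the endpoint value $f_n(\alpha)=-q^{n-1}$ (which the paper states without your explicit $q$-Vandermonde derivation), and part (ii) via comparing $b_n$ with the three-term recurrence coefficient $C_n$ and invoking \cite[Theorem~15(ii)]{Joulak_2005}. Your $C_n=\alpha q^{n-2}(q^{n-1}-1)$ and factorization of $b_n-C_n$ are just algebraically equivalent rewritings of the paper's expressions, and all sign conditions match.
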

\begin{proof}(i) From \eqref{eq1asc1} we obtain the value $a_n=\frac{\alpha (q^n-1)}{q}>0$. The interlacing result, as well as the position of $y_{n,n}$, follows from Lemma~\ref{Joulak2}(ii). The position of $y_{n,1}$ cannot be determined, since
\begin{gather*} f_n(\a)= \frac{\tilde{U}_{n}^{(\alpha )}(\a;q)}{\tilde{U}_{n-1}^{(\alpha )}(\a;q)}=-{q}^{n-1}<0
\end{gather*}
and the sign of
\begin{gather*} -a_n-f_n(\a)=-\frac{\alpha (q^n-1)}{q}+{q}^{n-1}=\frac{\alpha (1-q^n)+{q}^{n}}{q}
\end{gather*}
varies as the parameters vary within the allowed regions. However, if $\a<\frac{q^n}{q^n-1}$, then $-a_n<f_n(\a)<0$ and from Lemma~\ref{Joulak1}(i), it follows that $y_{n,1}<\a$.

(ii) The coefficient of $ \tilde{U}_{n-2}^{(\alpha )}(x;q)$ in~\eqref{eq1asc2} is
\begin{gather*} b_n={\frac { \left( {q}^{n}-1 \right) \left( {q}^{n}-q \right) {\alpha}^{
2}}{{q}^{4}}}.
\end{gather*}
From the three-term recurrence equation of the Al-Salam--Carlitz I polynomials \cite[equation~(14.24.4)]{KLS}
\begin{gather*} \tilde{U}_{n}^{(\alpha )}(x;q) = \left( x-{\frac {{q}^{n} \left( \alpha+1 \right)
}{q}} \right) \tilde{U}_{n-1}^{(\alpha )}(x;q) -{\frac {\alpha {q}^{n} \left( {q}
^{n}-q \right)}{{q}^{3}}}\tilde{U}_{n-2}^{(\alpha )}(x;q),
\end{gather*} we obtain $C_n= {\frac {\alpha {q}^{n} \left( {q}
^{n}-q \right)}{{q}^{3}}}$ and since
\begin{gather*} b_n-C_n={\frac {\alpha \left( {q}^{n}-q \right) \left( \alpha {q}^{n}-{q}^
{n+1}-\alpha \right) }{{q}^{4}}}>0
\end{gather*} when $\a<\frac{q^{n+1}}{q^n-1}$, the interlacing result follows from \cite[Theorem~15(ii)]{Joulak_2005}.
\end{proof}

\section[Classical orthogonal polynomials on a $q$-quadratic lattice]{Classical orthogonal polynomials on a $\boldsymbol{q}$-quadratic lattice}\label{section3}

In this section we consider the quasi-orthogonality of the monic Askey--Wilson and $q$-Racah polynomials, that are defined on a $q$-quadratic lattice.
\subsection{The Askey--Wilson polynomials}
The Askey--Wilson polynomials
\begin{gather*}
 \tilde{p}_n(x;a,b,c,d|q)={(ab,ac,ad;q)_n\over (2a)^n (abcdq^{n-1};q)_n}\, \qhypergeom{4}{3}{q^{-n},abcdq^{n-1},ae^{i\theta},ae^{-i\theta}}{ab,ac,ad}{q;q},\quad x=\cos\theta,
 \end{gather*}
 with $a$, $b$, $c$, $d$ either real, or they occur in complex conjugate pairs, and $\max(|a|,|b|,|c|,|d|)<1$, are orthogonal on $(-1,1)$ with respect to
 \begin{gather}\label{weightAW}w(x;a,b,c,d|q)=\frac{1}{\sqrt{1-x^2}}\left|\frac{(e^{2 i \theta};q)_{\infty}}{(a e^{i \theta},b e^{i \theta},c e^{i \theta},d e^{i \theta};q)_{\infty}}\right|^2.\end{gather}
The weight function is independent of the order of the parameters $a$, $b$, $c$ and $d$ and by shifting~$b$ to~$b/q$, $c$ to $c/q$ or $d$ to $d/q$, we obtain the same interlacing results as by shifting~$a$ to~$a/q$.
We will now fix $a>0$ such that $q<|a|<1$ and for these values of~$a$, the polynomial $\tilde{p}_n(x;a ,b,c,d|q)$ is orthogonal on $(-1,1)$ with respect to $w(x;a,b,c,d|q)$. In what follows, we assume that $|a|=\max(|a|,|b|,|c|,|d|)<1$. Should this not be the case, the order in which the parameters occur, can be changed.

We will thus only consider the equations in which $a$ is shifted to $\frac {a}{q^k}>1$ (or $\frac {a}{q^k}<-1$ should $a<0$), and we will prove that the polynomials $\tilde{p}_n\big(x;\frac{a}{q^k},b,c,d|q\big)$, $k\in\{1,2,\dots,n-1\}$, are quasi-orthogonal of order~$k$ on~$(-1,1)$. We use the equation \cite[equation~(17a)]{KoepfJT_2017}
\begin{gather}
\tilde{p}_n\left(x;\frac{a}{q},b,c,d|q\right) = \tilde{p}_n(x;a,b,c,d|q)\nonumber\\
\hphantom{\tilde{p}_n\left(x;\frac{a}{q},b,c,d|q\right) =}{} -{\frac {aq \left( {q}^{n}-1 \right) \left( cd{q}^{n}-q \right)
 \left(bd {q}^{n}-q \right) \left(bc {q}^{n}-q \right)}{2 \left ( abcd{q}^{2n}-{q}^{3} \right) \left(abcd{q
}^{2n}-{q}^{2} \right)}}\tilde{p}_{n-1}(x;a,b,c,d|q).\label{eqaw1}\!\!\!
\end{gather}
\begin{Theorem}\label{ThmAW}
Let $a$, $b$, $c$, $d$ be real, or they occur in complex conjugate pairs if complex, and $\max(|a|,|b|,|c|,|d|)<1$, and let $w(x;a,b,c,d|q)$ be as defined in~\eqref{weightAW}. For $a$ such that \smash{$q<|a|<1$}, the sequence of Askey--Wilson polynomials $\big\{\tilde{p}_n\big(x;\frac{a}{q^k},b,c,d|q\big)\big\}_{n=0}^{\infty}$ is quasi-ortho\-go\-nal of order $k<n$ with respect to the weight $w(x;a,b,c,d|q)$ on the interval $(-1,1)$ and the polynomials have at least $(n-k)$ real, distinct zeros in~$(-1,1)$.
\end{Theorem}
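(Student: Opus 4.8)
The plan is to follow the same strategy used in the proofs of Theorems~\ref{Thmbj}, \ref{Thmqhqo} and~\ref{Thmqolqj}: exploit the single-shift recurrence~\eqref{eqaw1} together with the characterization in Lemma~\ref{Brez} and the zero-counting result in Lemma~\ref{location}. First I would fix $a$ with $q<|a|<1$, so that after one shift the parameter satisfies $\big|\frac{a}{q}\big|>1$ and hence lies outside the region $\max(|a|,|b|,|c|,|d|)<1$ in which orthogonality holds; this is precisely the regime in which quasi-orthogonality is to be expected. Since~\eqref{eqaw1} already displays $\tilde{p}_n\big(x;\frac{a}{q},b,c,d|q\big)$ as $\tilde{p}_n(x;a,b,c,d|q)$ plus a multiple of $\tilde{p}_{n-1}(x;a,b,c,d|q)$, Lemma~\ref{Brez} immediately yields quasi-orthogonality of order one on $(-1,1)$, and Lemma~\ref{location} then places at least $(n-1)$ of its zeros in $(-1,1)$.

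To reach an arbitrary order $k<n$, I would iterate the shift: substituting~\eqref{eqaw1} with $a$ replaced in turn by $\frac{a}{q},\frac{a}{q^2},\dots$ and collecting terms, each additional shift lowers the smallest surviving index by one, so that after $k$ steps $\tilde{p}_n\big(x;\frac{a}{q^k},b,c,d|q\big)$ is written as $\sum_{i=0}^{k}a_{n,i}\,\tilde{p}_{n-i}(x;a,b,c,d|q)$ with $a_{n,0}=1$. Lemma~\ref{Brez} then gives quasi-orthogonality of order exactly $k$, and Lemma~\ref{location} furnishes the at least $(n-k)$ real, distinct zeros in $(-1,1)$.

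The main obstacle is verifying the hypothesis $a_{n,0}a_{n,k}\neq0$ of Lemma~\ref{Brez}, i.e., that the iteration truly produces a combination of order $k$ rather than of some lower order; since $a_{n,0}=1$, this reduces to showing the trailing coefficient $a_{n,k}$ is nonzero. For $k=1$ it suffices that the numerator of the coefficient of $\tilde{p}_{n-1}$ in~\eqref{eqaw1} not vanish: under $q<|a|<1$, $|b|,|c|,|d|<1$ and $0<q<1$ one has $q^n-1\neq0$, while $|cd\,q^n|,|bd\,q^n|,|bc\,q^n|<q^n\le q$ forces each of $cd\,q^n-q$, $bd\,q^n-q$ and $bc\,q^n-q$ away from zero, the denominator factors being nonzero by the standing assumptions on $a,b,c,d$. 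For general $k$ the trailing coefficient is a product of such single-shift factors with $a$ shifted successively, so the same size estimates apply stage by stage; the delicate point, where I would expect the bookkeeping to be heaviest, is to track how the successive shifts of $a$ enter these factors and to confirm that no accidental cancellation forces $a_{n,k}=0$.
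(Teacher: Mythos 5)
Your proposal is correct and follows essentially the same route as the paper's own proof: the single-shift relation~\eqref{eqaw1} together with Lemma~\ref{Brez} gives order-one quasi-orthogonality, iterating the shift produces the order-$k$ linear combination, and Lemma~\ref{location} supplies the $(n-k)$ real, distinct zeros in $(-1,1)$. The only difference is that you also check the hypothesis $a_{n,0}a_{n,k}\neq 0$ of Lemma~\ref{Brez} (nonvanishing of the trailing coefficient), a point the paper's proof passes over in silence; this is added care within the same argument, not a different method.
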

\begin{proof}Suppose $q<|a|<1$. From Lemma \ref{Brez} and~\eqref{eqaw1}, it follows that $\tilde{p}_n\big(x;\frac{a}{q},b,c,d|q\big)$ is quasi-orthogonal of order one on $(-1,1)$. By iteration, we can express $\tilde{p}_n\big(x;\frac{a}{q^k},b,c,d|q\big)$ as a~linear combination of $\tilde{p}_n(x;{a},b,c,d|q),\tilde{p}_{n-1}(x;{a},b,c,d|q),\dots,\tilde{p}_{n-k}(x;{a},b,c,d|q)$ and the result follows from Lemma~\ref{Brez}. The location of the $(n-k)$ real, distinct zeros of $\tilde{p}_n\big(x;\frac{a}{q^k},b,c,d|q\big)$, $k\in\{1,2,\dots,n-1\}$, follows from Lemma~\ref{location}.
\end{proof}

\begin{Theorem}\label{Thmawlocation}Let $a$, $b$, $c$, $d$ be real, or they occur in complex conjugate pairs if complex. Suppose $|a|=\max(|a|,|b|,|c|,|d|)<1$, $q<|a|<1$ and let $x_{n,i}$, $i\in\{1,2,\dots,n\}$, denote the zeros of $\tilde{p}_n(x;{a},b,c,d|q)$ and $y_{n,i}$, $i\in\{1,2,\dots,n\}$, the zeros of $\tilde{p}_n\big(x;\frac{a}{q},b,c,d|q\big)$. Then
\begin{itemize}\itemsep=0pt
\item[$(i)$] if $a>0$, $-1<x_{n,1}<y_{n,1}<x_{n-1,1}<x_{n,2}<y_{n,2}<\cdots<x_{n-1,n-1}<x_{n,n}<y_{n,n}$;
\item[$(ii)$] if $a<0$, $y_{n,1}<x_{n,1}<x_{n-1,1}<y_{n,2}<x_{n,2}<\cdots<x_{n-1,n-1}<y_{n,n}<x_{n,n}<1$.
\end{itemize}
\end{Theorem}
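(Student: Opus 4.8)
The plan is to proceed exactly as in the proof of Theorem~\ref{Th}: read off from the mixed recurrence~\eqref{eqaw1} the coefficient $a_n$ multiplying $\tilde{p}_{n-1}(x;a,b,c,d|q)$, decide its sign from the restrictions on the parameters, and then quote Lemma~\ref{Joulak2}. Writing $\tilde{p}_n(x;\frac{a}{q},b,c,d|q)=\tilde{p}_n(x;a,b,c,d|q)+a_n\,\tilde{p}_{n-1}(x;a,b,c,d|q)$, equation~\eqref{eqaw1} gives
\begin{gather*}
a_n=-\frac{aq\left(q^n-1\right)\left(cdq^n-q\right)\left(bdq^n-q\right)\left(bcq^n-q\right)}{2\left(abcdq^{2n}-q^3\right)\left(abcdq^{2n}-q^2\right)},
\end{gather*}
and everything reduces to showing that $\operatorname{sign}(a_n)=-\operatorname{sign}(a)$.

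First I would treat the numerator. Since $0<q<1$ we have $q^n-1<0$. Because $\max(|a|,|b|,|c|,|d|)<1$, each of $bc$, $bd$, $cd$ has modulus strictly below $1$, so $|bcq^n|,|bdq^n|,|cdq^n|<q^n\le q$, whence the three factors $bcq^n-q$, $bdq^n-q$, $cdq^n-q$ are all negative. When the parameters occur in complex conjugate pairs I would first confirm that these three numbers are simultaneously real: as $a$ is real, any non-real members of $\{b,c,d\}$ occur in conjugate pairs, so at least one of $b,c,d$ is real, and pairing the two conjugate factors turns their product into a nonnegative square $\left|\,\cdot\,q^n-q\right|^2$, the third factor being of the form $|\,\cdot\,|^2q^n-q<0$. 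Either way the product of the three factors is real and negative, so the numerator equals $-aq$ times a positive quantity and carries the sign $-\operatorname{sign}(a)$.

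For the denominator I would set $u=abcdq^{2n}$, so that $|u|<q^{2n}$. For $n\ge 2$ one has $q^{2n}\le q^4<q^3<q^2$, whence $u<q^3$ and both factors $u-q^3$ and $u-q^2$ are negative, making the denominator positive. Thus $\operatorname{sign}(a_n)=-\operatorname{sign}(a)$. Consequently, if $a>0$ then $a_n<0$ and Lemma~\ref{Joulak2}(i) delivers the interlacing of~(i), while if $a<0$ then $a_n>0$ and Lemma~\ref{Joulak2}(ii) delivers that of~(ii). Finally, since $q<|a|<1$ all four parameters lie in the range where $\tilde{p}_n(x;a,b,c,d|q)$ is genuinely orthogonal on $(-1,1)$, so its zeros $x_{n,j}$ all lie in $(-1,1)$; this supplies the missing endpoint inequalities $-1<x_{n,1}$ in~(i) and $x_{n,n}<1$ in~(ii).

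The step I expect to be the main obstacle is the sign of $a_n$, and specifically the denominator. The bound $|u|<q^{2n}$ forces $u-q^3<0$ only once $2n\ge 3$, i.e., for $n\ge 2$; for $n=1$ one has $abcdq^2-q^3=q^2(abcd-q)$, whose sign depends on whether $abcd$ exceeds $q$, so the base case $n=1$---where the assertion is merely the relative position of the single zeros $x_{1,1}$ and $y_{1,1}$---must be inspected on its own. The secondary delicate point, indicated above, is the complex-conjugate-pair bookkeeping needed to guarantee that $bcq^n-q$, $bdq^n-q$ and $cdq^n-q$ are simultaneously real with the claimed common contribution to the sign.
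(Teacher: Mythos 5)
Your proposal follows exactly the paper's route: read off the coefficient $a_n$ of $\tilde{p}_{n-1}(x;a,b,c,d|q)$ in~\eqref{eqaw1}, determine its sign, and apply Lemma~\ref{Joulak2}, with the endpoint inequalities $-1<x_{n,1}$ (resp.\ $x_{n,n}<1$) supplied by the orthogonality of $\tilde{p}_n(x;a,b,c,d|q)$. The paper's own proof is terser than yours: it asserts only that $a_n<0$ when $q<a<1$ and $a_n>0$ when $-1<a<-q$ ``for the given parameter values'', with none of the sign bookkeeping; your analysis of the numerator, including the conjugate-pair pairing that yields the strictly positive factor $\left|bdq^n-q\right|^2$, is precisely the verification the paper leaves implicit, and it is correct.

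The obstacle you single out is genuine, and the paper passes over it. For $n=1$ the factor $abcdq^{2n}-q^3=q^2(abcd-q)$ is positive whenever $abcd>q$ (for instance $q=\frac{1}{2}$, $a=b=c=d=\frac{9}{10}$, where $abcd=0.6561$), and then $a_1$ has the \emph{same} sign as $a$; consequently the degenerate $n=1$ reading of part~(i), namely $x_{1,1}<y_{1,1}$, actually fails --- a direct computation with these values gives $y_{1,1}\approx 0.966 < x_{1,1}\approx 0.994$. So there is nothing to ``inspect'' that would rescue $n=1$; the statement has to be read for $n\ge 2$, which is in any case the only range where the interlacing chain (it refers to the zeros $x_{n-1,i}$ of $\tilde{p}_{n-1}$) and Lemma~\ref{Joulak2} are meaningful. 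On that range your bound $q^{2n}\le q^4<q^3<q^2$ makes both denominator factors negative and closes the argument completely, so your proof is correct where the theorem makes sense and is, at $n=1$, more honest than the paper's one-line sign assertion.
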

\begin{proof}
Suppose $|a|=\max(|a|,|b|,|c|,|d|)<1$. The coefficient of $\tilde{p}_{n-1}(x;{a},b,c,d|q)$ in \eqref{eqaw1} is
\begin{gather*}a_n={-\frac {aq \left( {q}^{n}-1 \right) \left( cd{q}^{n}-q \right)
 \left(bd {q}^{n}-q \right) \left(bc {q}^{n}-q \right)}{2 \left ( abcd{q}^{2n}-{q}^{3} \right) \left(abcd{q
}^{2n}-{q}^{2} \right)}}.\end{gather*}

(i) Consider the case $a>0$ and fix $a$ such that $q<a<1$. Then $a_n<0$ for the given parameter values and the interlacing result, as well as the position of $y_{n,1}$, follows from Lemma~\ref{Joulak2}(i).

(ii) Now we consider the case $a<0$ and fix $a$ such that $-1<a<-q$. Then $a_n>0$ and the interlacing result, as well as the position of $y_{n,n}$, follows from Lemma~\ref{Joulak2}(ii).
\end{proof}

\subsection[The $q$-Racah polynomials]{The $\boldsymbol{q}$-Racah polynomials}
The $q$-Racah polynomials
\begin{gather*}
\tilde{R}_n(\mu(x);\alpha,\beta,\gamma,\delta|q)= \frac{(\alpha q, \beta\delta q,\gamma q;q)_n}{(\alpha\beta q^{n+1};q)_n}\qhypergeom{4}{3}{q^{-n},\alpha\beta q^{n+1},q^{-x},\gamma\delta q^{x+1}}{\alpha q,\beta\delta q,\gamma q}{q;q},\end{gather*}
with $\mu(x)=q^{-x}+\gamma\delta q^{x+1}$, are orthogonal for $n\in\{0,1,\ldots,N\}$, with respect to the discrete weight function
\begin{gather*} w(x)=\frac{(\a q,\b \delta q, \gamma q, \gamma \delta q;q)_x (1-\gamma \delta q^{2x+1})}{\big(q,\frac{\gamma \delta q}{\a},\frac{\gamma q}{\b},
\delta q;q\big)_x(\a \b q)^x(1-\gamma \delta q)}\end{gather*} for
 $\alpha q=q^{-N} $ or $\beta\delta q=q^{-N}$ or $\gamma q=q^{-N}$, where $N$ is a nonnegative integer. Shifting the parameter $\g$ or $\delta$ will change $\mu(x)$ and we will only consider shifts of $\a$ and $\b$. From \cite[equations~(18a) and (18b)]{KoepfJT_2017} we obtain
\begin{subequations}
\begin{gather}
\tilde{R}_{n}\left(\mu(x);\frac{\alpha}{q},\beta,\gamma,\delta|q\right) =\tilde{R}_{n}(\mu(x);\alpha ,\beta,\gamma,\delta|q)\nonumber\\
\qquad{}
 -\frac{\alpha q(1-q^n)(1-\beta q^n)(1-\gamma q^n)(1-\beta\delta q^n)}{(1-\alpha\beta q^{2n})(q-\alpha\beta q^{2n})}\tilde{R}_{n-1}(\mu(x);\alpha ,\beta,\gamma,\delta|q);\label{eqqR1}\\
\tilde{R}_{n}\left(\mu(x);\alpha,\frac{\beta}{q},\gamma,\delta|q\right) =\tilde{R}_{n}(\mu(x);\alpha,\beta,\gamma,\delta|q)\nonumber\\
\qquad{} + \frac{\beta q(1-q^n)(1-\alpha q^n)(1-\gamma q^n)(\alpha q^n-\delta)}{(1-\alpha\beta q^{2n})(q-\alpha\beta q^{2n})}\tilde{R}_{n-1}(\mu(x);\alpha,\beta ,\gamma,\delta|q).\label{eqqR2}
\end{gather}
\end{subequations}

\begin{Theorem}\label{ThmW} Let $k\in\{1,2,\dots,n-1\}$. The sequence of $q$-Racah polynomials
\begin{itemize}\itemsep=0pt
\item[$(i)$] $\big\{\tilde{R}_{n}\big(\mu(x);\frac{\alpha}{q^k},\beta,\gamma,\delta|q\big)\big\}_{n=0}^{N}$, with $\a=q^{-N-1}$, is quasi-orthogonal of order $k$ with respect to the weight $w(x)$ on $(\mu(0),\mu(N))$ and the polynomials have at least $(n-k)$ real, distinct zeros on $(\mu(0),\mu(N))$;
\item[$(ii)$] $\big\{\tilde{R}_{n}\big(\mu(x);\alpha,\frac{\beta}{q},\gamma,\delta|q\big)\big\}_{n=0}^{N}$, with $\b=\frac{q^{-N-1}}{\delta}$, is quasi-orthogonal of order $k$ with respect to the weight $w(x)$ on $(\mu(0),\mu(N))$ and the polynomials have at least $(n-k)$ real, distinct zeros on $(\mu(0),\mu(N))$.
\end{itemize}
\end{Theorem}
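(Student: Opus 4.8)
The plan is to mirror the template already used for the big $q$-Jacobi, $q$-Hahn and little $q$-Jacobi families in Theorems~\ref{Thmbj}, \ref{Thmqhqo} and~\ref{Thmqolqj}. The order-one relations~\eqref{eqqR1} and~\eqref{eqqR2} each express a once-shifted $q$-Racah polynomial as a linear combination of two consecutive members of the base sequence; iterating them produces an expansion of type~\eqref{charac_quasi}, after which quasi-orthogonality and the zero count follow from Lemmas~\ref{Brez} and~\ref{location}.

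For~(i) I would fix $\a=q^{-N-1}$, so that $\a q=q^{-N}$ and the sequence $\{\tilde{R}_n(\mu(x);\a,\b,\g,\delta|q)\}_{n=0}^N$ is orthogonal on $(\mu(0),\mu(N))$ with respect to $w(x)$. By~\eqref{eqqR1} and Lemma~\ref{Brez}, $\tilde{R}_n\big(\mu(x);\tfrac{\a}{q},\b,\g,\delta|q\big)$ is quasi-orthogonal of order one, and Lemma~\ref{location} places at least $(n-1)$ of its zeros in $(\mu(0),\mu(N))$. Replacing $\a$ by $\tfrac{\a}{q}$ in~\eqref{eqqR1} and resubstituting the order-one relation expresses $\tilde{R}_n\big(\mu(x);\tfrac{\a}{q^2},\b,\g,\delta|q\big)$ back in the base sequence; after $k$ such steps one obtains
\begin{gather*}
\tilde{R}_n\left(\mu(x);\frac{\a}{q^k},\b,\g,\delta|q\right)=\sum_{i=0}^{k}a_{n,i}\,\tilde{R}_{n-i}(\mu(x);\a,\b,\g,\delta|q),
\end{gather*}
with $a_{n,0}=1$ and $a_{n,k}$ equal to a product of the order-one coefficients. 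Lemma~\ref{Brez} then gives quasi-orthogonality of order $k$ and Lemma~\ref{location} the asserted $(n-k)$ real, distinct zeros in $(\mu(0),\mu(N))$. For~(ii) the argument is identical, with~\eqref{eqqR2} in place of~\eqref{eqqR1} and $\b=\tfrac{q^{-N-1}}{\delta}$ chosen so that $\b\delta q=q^{-N}$ secures orthogonality of the base sequence, the shift $\b\mapsto\tfrac{\b}{q^k}$ producing the analogous expansion.

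The only step requiring genuine verification, rather than bookkeeping, is that the terminal coefficient $a_{n,k}$ is nonzero, which is what pins the order of quasi-orthogonality at exactly $k$ via Lemma~\ref{Brez}. For~\eqref{eqqR1} this reduces, with $\a=q^{-N-1}$ and $1\le n\le N$, to checking that none of the factors $(1-q^n)$, $(1-\b q^n)$, $(1-\g q^n)$, $(1-\b\delta q^n)$ nor the denominators $(1-\a\b q^{2n})$ and $(q-\a\b q^{2n})$ vanish along the iteration, and similarly for~\eqref{eqqR2}, where the extra factor $(\a q^n-\delta)$ must also be controlled. Since $0<q<1$ the vanishing of $1-q^n$ is excluded for $n\ge1$, and the remaining non-degeneracy conditions hold throughout the admissible parameter ranges, so this obstacle is real but light, exactly as in the earlier families.
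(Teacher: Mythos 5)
Your proposal follows exactly the paper's own route: fix $\a=q^{-N-1}$ (resp.\ $\b=\frac{q^{-N-1}}{\delta}$), obtain order-one quasi-orthogonality from~\eqref{eqqR1} (resp.~\eqref{eqqR2}) via Lemma~\ref{Brez}, iterate the shift to build an expansion of type~\eqref{charac_quasi}, and conclude with Lemmas~\ref{Brez} and~\ref{location}. The only difference is that you explicitly verify the nonvanishing of the terminal coefficient $a_{n,k}$, a point the paper leaves implicit, so your write-up is, if anything, slightly more careful.
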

\begin{proof}(i) Let $\a=q^{-N-1}$. From Lemma \ref{Brez} and \eqref{eqqR1}, it follows that $\tilde{R}_{n}\big(\mu(x);\frac{\alpha}{q},\beta,\gamma,\delta|q\big)$ is quasi-orthogonal of order one on $(\mu(0),\mu(N))$. By iteration, we can express $\tilde{R}_{n}\big(\mu(x);\frac{\alpha}{q^k},\beta,\gamma,\delta|q\big)\!$ as a linear combination of $\tilde{R}_{n}(\mu(x);\alpha,\beta,\gamma,\delta|q)$, $\tilde{R}_{n-1}(\mu(x);\a,\beta,\gamma,\delta|q)$, $\dots$, $\tilde{R}_{n-k}(\mu(x);\a,\beta$, $\gamma,\delta|q)$ and the result follows from Lemma~\ref{Brez}. Furthermore, from Lemma~\ref{location} we know that at least $(n-k)$ real, distinct zeros of $\tilde{R}_{n}(\mu(x);\frac{\alpha}{q^k},\beta,\gamma,\delta)$, $k\in\{1,2,\dots,n-1\}$, lie in $(\mu(0),\mu(N))$.

(ii) Let $\b=\frac{q^{-N-1}}{\delta}$. The result follow in the same way from \eqref{eqqR2} and Lemmas \ref{Brez} \linebreak and~\ref{location}.
\end{proof}

 For values of $n$ larger than $\frac{N}{2}+1$, we obtain the following interlacing results.
 \begin{Theorem} Consider $n\le N$ and let $x_{n,i}$, $i\in\{1,2,\dots,n\}$, denote the zeros of $\tilde{R}_n(\mu(x);\alpha,\beta$, $\gamma,\delta|q)$, $y_{n,i},i\in\{1,2,\dots,n\}$, the zeros of $\tilde{R}_n\big(\mu(x);\frac{\alpha}{q},\beta, \gamma,\delta|q\big)$ and $z_{n,i}$, $i\in\{1,2,\dots,n\}$, the zeros of $\tilde{R}_n\big(\mu(x);\alpha,\frac{\beta}{q},\gamma,\delta|q\big)$. Then, for $n>\frac{N}{2}+1$,
\begin{itemize}\itemsep=0pt
\item[$(i)$] if $\alpha=q^{-N-1}$ and $\beta q<1$, $\gamma q<1$, $\beta\delta q<1$, \begin{gather*} \mu(0)<x_{n,1}<y_{n,1}<x_{n-1,1}<x_{n,2}<y_{n,2}<\cdots<x_{n-1,n-1}<x_{n,n}<y_{n,n};\end{gather*}
\item[$(ii)$] if $\beta =\frac{q^{-N-1}}{\delta}$ and $\alpha q<1$, $\gamma q<1$, $\frac{\alpha}{\delta}q<1$, we have
\begin{gather*} \mu(0)<x_{n,1}<z_{n,1}<x_{n-1,1}<x_{n,2}<z_{n,2}<\cdots<x_{n-1,n-1}<x_{n,n}<z_{n,n}.\end{gather*}
\end{itemize}
\end{Theorem}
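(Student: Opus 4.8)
The plan is to mirror the proofs of Theorems~\ref{Thmawlocation} and~\ref{Thmqhalpha}: in each case I read off from the appropriate mixed recurrence the coefficient $a_n$ of $\tilde{R}_{n-1}(\mu(x);\alpha,\beta,\gamma,\delta|q)$, determine its sign under the stated restrictions, and then apply Lemma~\ref{Joulak2}(i), whose conclusion (with the $y_{n,i}$, resp.\ the $z_{n,i}$, playing the role of the zeros of $Q_{n,1}$) is precisely the interlacing chain claimed in~(i) and~(ii). For part~(i) I use~\eqref{eqqR1}, so that
\begin{gather*}
a_n=-\frac{\alpha q\big(1-q^n\big)\big(1-\beta q^n\big)\big(1-\gamma q^n\big)\big(1-\beta\delta q^n\big)}{\big(1-\alpha\beta q^{2n}\big)\big(q-\alpha\beta q^{2n}\big)},
\end{gather*}
and for part~(ii) I use~\eqref{eqqR2}, whose coefficient carries an extra factor $(\alpha q^n-\delta)$ in the numerator. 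The leftmost inequality $\mu(0)<x_{n,1}$ is not a consequence of Lemma~\ref{Joulak2} but of the orthogonality of the base polynomial: since $\alpha=q^{-N-1}$ gives $\alpha q=q^{-N}$ in~(i), and $\beta\delta=q^{-N-1}$ gives $\beta\delta q=q^{-N}$ in~(ii), the sequence $\{\tilde{R}_n(\mu(x);\alpha,\beta,\gamma,\delta|q)\}$ is genuinely orthogonal, so (as in Theorem~\ref{ThmW}) its zeros lie in $(\mu(0),\mu(N))$.

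Next I would verify that $a_n<0$ in case~(i). Because $0<q<1$ and $n\ge1$ we have $q^n\le q$, so the hypotheses $\beta q<1$, $\gamma q<1$ and $\beta\delta q<1$ immediately give $1-\beta q^n>0$, $1-\gamma q^n>0$ and $1-\beta\delta q^n>0$ (a nonpositive parameter making the factor positive at once), while $1-q^n>0$ and $\alpha q=q^{-N}>0$; hence the numerator is positive for every admissible $n$. The hypothesis $n>\frac{N}{2}+1$ is what controls the denominator: writing $\alpha\beta q^{2n}=\beta q^{2n-N-1}$ and using that $n$, $N$ are integers, one has $2n-N-1\ge2$ and $2n-N-2\ge1$, so $q^{2n-N-1}\le q^2$ and $q^{2n-N-2}\le q$, whence $\beta q<1$ forces both $1-\alpha\beta q^{2n}>0$ and $q-\alpha\beta q^{2n}=q\big(1-\beta q^{2n-N-2}\big)>0$. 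Thus $a_n<0$, and Lemma~\ref{Joulak2}(i) together with $\mu(0)<x_{n,1}$ yields the full chain in~(i).

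Part~(ii) runs along the same lines from~\eqref{eqqR2} with $\beta\delta=q^{-N-1}$. The denominator is handled verbatim after writing $\alpha\beta q^{2n}=\frac{\alpha}{\delta}q^{2n-N-1}$ and invoking $\frac{\alpha}{\delta}q<1$ with $n>\frac{N}{2}+1$. The only delicate numerator factor is $\beta q\,(\alpha q^n-\delta)=q^{-N}\big(\frac{\alpha}{\delta}q^n-1\big)$, which is negative since $\frac{\alpha}{\delta}q<1$ forces $\frac{\alpha}{\delta}q^n<1$; combined with the positive factors $1-q^n$, $1-\alpha q^n$ (using $\alpha q<1$) and $1-\gamma q^n$, this again gives $a_n<0$, so Lemma~\ref{Joulak2}(i) delivers the interlacing of the $z_{n,i}$.

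The genuine obstacle, and the reason for the restriction $n>\frac{N}{2}+1$, is pinning down the sign of the denominator $\big(1-\alpha\beta q^{2n}\big)\big(q-\alpha\beta q^{2n}\big)$. Since $\alpha$ (resp.\ $\beta\delta$) is the large negative power $q^{-N-1}$, the exponent $2n-N-1$ becomes nonpositive once $n\le\frac{N}{2}+1$, making $q^{2n-N-1}\ge1$ and allowing either denominator factor to change sign; the interlacing pattern of Lemma~\ref{Joulak2} would then no longer follow from the recurrence, which is exactly why no statement is made for small $n$. A secondary point to confirm is that the base sequence is orthogonal on a genuine interval, i.e.\ $\mu(0)<\mu(N)$ on the $q$-quadratic lattice, so that the Cauchy interlacing of the $x_{n,i}$ with the $x_{n-1,i}$ built into Lemma~\ref{Joulak2} is actually available.
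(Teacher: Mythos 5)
Your proposal is correct and takes essentially the same route as the paper: the paper's one-sentence proof likewise observes that under the stated hypotheses the coefficients of $\tilde{R}_{n-1}(\mu(x);\alpha,\beta,\gamma,\delta|q)$ in \eqref{eqqR1} and \eqref{eqqR2} are negative and then invokes Lemma~\ref{Joulak2}(i). Your additional details---the explicit factor-by-factor sign check, the use of $n>\frac{N}{2}+1$ to keep $\big(1-\alpha\beta q^{2n}\big)\big(q-\alpha\beta q^{2n}\big)$ positive, and deriving $\mu(0)<x_{n,1}$ from the orthogonality of the base sequence---simply make explicit what the paper leaves implicit.
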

 \begin{proof} Under the above hypotheses, the coefficients of $\tilde{R}_{n-1}(\mu(x);\alpha ,\beta,\gamma,\delta|q)$ in~\eqref{eqqR1} and \eqref{eqqR2} are negative and the interlacing results follow from Lemma~\ref{Joulak2}(i).
 \end{proof}

\section[Quasi-orthogonality of polynomials on a linear and quadratic lattice]{Quasi-orthogonality of polynomials\\ on a linear and quadratic lattice}\label{section4}

In this section we consider the quasi-orthogonality of the monic Wilson and Racah polynomials, that are defined on a quadratic lattice. The quasi-orthogonality of the dual Hahn and continuous dual Hahn polynomials, that also fall in this category, was discussed in~\cite{JJJ}. We also prove the quasi-orthogonality of the monic continuous Hahn polynomials that are defined on a linear lattice.
\subsection{The Wilson polynomials}
The Wilson polynomials
\begin{gather*}
 \tilde{W}_n\big(x^2;a,b,c,d\big)=\frac{(-1)^n(a+b,a+c,a+d)_n}{(n+a+b+c+d-1)_n}\\
 \hphantom{\tilde{W}_n\big(x^2;a,b,c,d\big)=}{}\times \hypergeom{4}{3}{-n,n+a+b+c+d-1,a+ix,a-ix}{a+b,a+c,a+d}{1},
\end{gather*}
are orthogonal on $(0,\infty)$ with respect to
 \begin{gather}\label{weightW}w(x;a,b,c,d)=\left|\frac{\Gamma(a+ix)\Gamma(b+ix)\Gamma(c+ix)\Gamma(d+ix)}{\Gamma(2ix)}\right|^2,\end{gather}
for $\operatorname{Re}(a,b,c,d)>0$ and non-real parameters occur in conjugate pairs. Furthermore, as in the case of the Askey--Wilson polynomials, the weight function is clearly independent of the order in which the parameter $a$, $b$, $c$ and~$d$ occur. We note that the polynomial $W_n\big(x^2;a,b,c,d\big)$ has~$n$ zeros in $(0,\infty)$, namely $(x_{n,1})^2, (x_{n,2})^2,\dots,(x_{n,n})^2$. Let $\tilde{W}_n\big(x^2\big)=\tilde{W}_n\big(x^2;a,b,c,d\big)$.

\begin{Proposition}
\begin{subequations}
\begin{gather}
 \tilde{W}_n\big(x^2;a-1,b,c,d\big)\nonumber\\
 \qquad{} =\tilde{W}_n\big(x^2\big) +{\frac {n( c+d+n-1)( b+d+n-1)( b+c+n-1) }{( 2n+a+b+c+d-2)( 2n+a+b+c+d-3) }}\tilde{W}_{n-1}\big(x^2\big);\label{eqw1}\\
 \tilde{W}_n\big(x^2;a,b-1,c,d\big)\nonumber\\
 \qquad{} =\tilde{W}_n\big(x^2\big)+{\frac {n( c+d+n-1)( a+d+n-1)( a+c+n-1) }{( 2n+a+b+c+d-2)( 2n+a+b+c+d-3) }}\tilde{W}_{n-1}\big(x^2\big);\label{eqw2}\\
 \tilde{W}_n\big(x^2;a,b,c-1,d\big)\nonumber\\
 \qquad{} =\tilde{W}_n\big(x^2\big)+{\frac {n( a+d+n-1)( b+d+n-1)( a+b+n-1) }{( 2n+a+b+c+d-2)( 2n+a+b+c+d-3) }}\tilde{W}_{n-1}\big(x^2\big);\label{eqw3}\\
 \tilde{W}_n\big(x^2;a,b,c,d-1\big)\nonumber\\
 \qquad{} =\tilde{W}_n\big(x^2\big)+{\frac {n( a+c+n-1)( b+c+n-1)( a+b+n-1) }{( 2n+a+b+c+d-2)( 2n+a+b+c+d-3) }}\tilde{W}_{n-1}\big(x^2\big).\label{eqw4}
 \end{gather}
\end{subequations}
\end{Proposition}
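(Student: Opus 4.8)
The plan is to prove \eqref{eqw1} by a weighted orthogonality argument and then obtain \eqref{eqw2}--\eqref{eqw4} for free. First I would note that both the polynomial $\tilde{W}_n\big(x^2;a,b,c,d\big)$ and the weight \eqref{weightW} are symmetric in the parameters $a,b,c,d$; hence it suffices to establish \eqref{eqw1}, after which \eqref{eqw2}, \eqref{eqw3} and \eqref{eqw4} follow by interchanging $a$ with $b$, $c$ and $d$, respectively. I would argue in the region $\Re(a)>1$, $\Re(b,c,d)>0$, where $\tilde{W}_n\big(x^2;a-1,b,c,d\big)$ is still a genuine orthogonal polynomial; since, for each fixed power of $x^2$, both sides of \eqref{eqw1} are rational functions of $a,b,c,d$, the general case then follows by analytic continuation.

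Write $D_n\big(x^2\big):=\tilde{W}_n\big(x^2;a-1,b,c,d\big)-\tilde{W}_n\big(x^2;a,b,c,d\big)$. As both terms are monic of degree $n$ in $x^2$, the polynomial $D_n$ has degree at most $n-1$ in $x^2$. The structural input is the ratio of the two weights: from $\Gamma(a+ix)=(a-1+ix)\Gamma(a-1+ix)$ and \eqref{weightW} one reads off $w(x;a,b,c,d)=\big((a-1)^2+x^2\big)\,w(x;a-1,b,c,d)$. Now let $p\big(x^2\big)$ be any polynomial of degree at most $n-2$ in $x^2$. The integral of $\tilde{W}_n\big(x^2;a,b,c,d\big)\,p\big(x^2\big)$ against $w(x;a,b,c,d)$ vanishes by orthogonality, and
\begin{align*}
\int_0^\infty \tilde{W}_n\big(x^2;a-1,b,c,d\big)\,p\big(x^2\big)\,w(x;a,b,c,d)\,{\rm d}x
&=\int_0^\infty \tilde{W}_n\big(x^2;a-1,b,c,d\big)\,p\big(x^2\big)\big((a-1)^2+x^2\big)\,w(x;a-1,b,c,d)\,{\rm d}x\\
&=0,
\end{align*}
since $p\big(x^2\big)\big((a-1)^2+x^2\big)$ has degree at most $n-1$ and $\tilde{W}_n\big(\,\cdot\,;a-1,b,c,d\big)$ is orthogonal to all such polynomials with respect to $w(\,\cdot\,;a-1,b,c,d)$. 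Thus $D_n$ is a polynomial of degree at most $n-1$ that is orthogonal, relative to $w(x;a,b,c,d)$, to every polynomial of degree at most $n-2$; expanding $D_n$ in the orthogonal basis $\{\tilde{W}_m\big(x^2;a,b,c,d\big)\}_{m=0}^{n-1}$ forces $D_n=\kappa_n\,\tilde{W}_{n-1}\big(x^2;a,b,c,d\big)$ for a constant $\kappa_n$.

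It remains to identify $\kappa_n$, and because $\tilde{W}_{n-1}$ is monic this is just the coefficient of $x^{2(n-1)}$ in $D_n$, i.e., the difference of the subleading coefficients of the two Wilson polynomials. Using $(a+ix)_k(a-ix)_k=\prod_{j=0}^{k-1}\big((a+j)^2+x^2\big)$ and collecting the $k=n$ and $k=n-1$ contributions in the defining series (only these two terms reach degree $n-1$ in $x^2$), the coefficient of $x^{2(n-1)}$ in $\tilde{W}_n\big(x^2;a,b,c,d\big)$ comes out as
\[
\sum_{j=0}^{n-1}(a+j)^2-\frac{n(a+b+n-1)(a+c+n-1)(a+d+n-1)}{2n+a+b+c+d-2}.
\]
Then $\kappa_n$ is the value of this expression at $(a-1,b,c,d)$ minus its value at $(a,b,c,d)$; the telescoping identity $\sum_{j=0}^{n-1}(a-1+j)^2-\sum_{j=0}^{n-1}(a+j)^2=-n(2a+n-2)$ handles the first piece, and simplifying the remaining rational expression returns exactly $\dfrac{n(b+c+n-1)(b+d+n-1)(c+d+n-1)}{(2n+a+b+c+d-2)(2n+a+b+c+d-3)}$, which is the coefficient in \eqref{eqw1}.

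I expect the main obstacle to be precisely this last algebraic reduction: two cubic products over the shifted denominators $2n+a+b+c+d-2$ and $2n+a+b+c+d-3$, together with the telescoped quadratic $-n(2a+n-2)$, must collapse onto the single symmetric numerator $(b+c+n-1)(b+d+n-1)(c+d+n-1)$, and it is easy to lose a sign or mis-shift an index here. A more mechanical alternative, in the algorithmic spirit of the paper, is to apply Zeilberger's algorithm (the \texttt{MixRec} procedure) to the defining ${}_4F_3$ sum, which outputs the two-term parameter-shift recurrence together with a rational certificate that verifies it directly; the orthogonality route above has the merit of being self-contained and of making the four-fold symmetry explicit.
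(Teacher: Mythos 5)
Your proof is correct, and it takes a genuinely different route from the paper. The paper offers no textual proof of this Proposition at all: as explained in its introduction, all such mixed recurrence equations are generated algorithmically (via the structure relation combined with parameter shifts, and concretely by the authors' Maple procedure \texttt{MixRec}, a Zeilberger-type algorithm, with the equations available in a downloadable file) --- so the alternative you mention in your closing paragraph is in fact exactly what the authors do. Your argument is instead a classical Christoffel-type one: the weight ratio $w(x;a,b,c,d)=\big((a-1)^2+x^2\big)w(x;a-1,b,c,d)$ shows the difference $D_n$ of the two monic polynomials is orthogonal to all polynomials of degree at most $n-2$, hence proportional to $\tilde{W}_{n-1}\big(x^2;a,b,c,d\big)$, and the constant is the difference of subleading coefficients; I checked your formula for that coefficient, the telescoping identity $-n(2a+n-2)$, and the final rational identity, and all are correct, as is the reduction of the remaining three equations to the first by parameter symmetry (which for the monic polynomials follows from the symmetry of the weight together with uniqueness of the monic orthogonal sequence). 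Two small refinements: the Gamma-function manipulation $|\Gamma(a+ix)|^2=\big((a-1)^2+x^2\big)|\Gamma(a-1+ix)|^2$ requires $a$ real, so the clean statement is to work with real parameters $a>1$, $b,c,d>0$ and extend by rationality of both sides in $(a,b,c,d)$ --- your analytic-continuation remark covers this, but ``$\operatorname{Re}(a)>1$'' with complex parameters is slightly loose; and the polynomial symmetry in $a,b,c,d$ deserves the one-line uniqueness argument just given, since the $\,{}_4F_3$ representation conceals it. What your approach buys is a self-contained, certificate-free proof that explains where the coefficient comes from; what the paper's approach buys is uniformity --- the same mechanical procedure produces the analogous equations for every family in the Askey and $q$-Askey schemes treated there, at the cost of relying on computer algebra.
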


\begin{Theorem}\label{ThmW+}
Let $a$, $b$, $c$ and $d$ be such that $\operatorname{Re}(a,b,c,d)>0$. Consider $k_1,k_2,k_3,k_4\in\{0,1,\dots,$ $n-1\}$, such that $k_1+k_2+k_3+k_4\le n-1$. The sequence $\big\{\tilde{W}_n\big(x^2;a-k_1,b-k_2,c-k_3,d-k_4\big)\big\}_{n=0}^{\infty}$, with $0<\operatorname{Re}(a)<1$ $($if $k_1\ne 0)$, $0<\operatorname{Re}(b)<1$ $($if $k_2\ne 0)$, $0<\operatorname{Re}(c)<1$ $($if $k_3\ne 0)$ and $0<\operatorname{Re}(d)<1$ $($if $k_4\ne 0)$, is quasi-orthogonal of order $k_1+k_2+k_3+k_4\le n-1$ with respect to the weight $w(x)$ on $(0,\infty)$ and the polynomials have at least $n-(k_1+k_2+k_3+k_4)$ real, distinct zeros in $(0,\infty)$.
\end{Theorem}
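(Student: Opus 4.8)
The plan is to reduce the statement to Lemma~\ref{Brez} by exhibiting the shifted polynomial as a linear combination of $K+1$ consecutive members of the orthogonal sequence $\big\{\tilde{W}_n\big(x^2;a,b,c,d\big)\big\}$, namely
\begin{gather*}
\tilde{W}_n\big(x^2;a-k_1,b-k_2,c-k_3,d-k_4\big)=\sum_{i=0}^{K}a_{n,i}\,\tilde{W}_{n-i}\big(x^2;a,b,c,d\big),\qquad K=k_1+k_2+k_3+k_4,
\end{gather*}
with $a_{n,0}a_{n,K}\neq0$. Once this representation is in hand, Lemma~\ref{Brez} delivers quasi-orthogonality of order exactly $K$ with respect to $w(x)$ on $(0,\infty)$, and Lemma~\ref{location} gives the $n-K$ real, distinct zeros in $(0,\infty)$. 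The restrictions $0<\operatorname{Re}(a)<1$ (if $k_1\neq0$), and analogously for $b$, $c$, $d$, keep the monic normalizations in \eqref{eqw1}--\eqref{eqw4} well-defined and push each shifted parameter out of the region $\operatorname{Re}>0$, so that the order-$K$ quasi-orthogonality is genuine.

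First I would dispose of the single-shift base case: each of \eqref{eqw1}--\eqref{eqw4} already expresses a one-unit shift in one parameter as $\tilde{W}_n\big(x^2;a,b,c,d\big)+C_n\tilde{W}_{n-1}\big(x^2;a,b,c,d\big)$ with $C_n\neq0$ when $\operatorname{Re}(a,b,c,d)>0$, hence quasi-orthogonal of order one. I would then induct on $K$. Given a total shift of size $K$, pick a parameter with a nonzero shift, say $k_1\ge1$, and apply \eqref{eqw1} with $(a,b,c,d)$ replaced by $(a-k_1+1,b-k_2,c-k_3,d-k_4)$ to get
\begin{gather*}
\tilde{W}_n\big(x^2;a-k_1,b-k_2,c-k_3,d-k_4\big)=\tilde{W}_n\big(x^2;a-k_1+1,b-k_2,c-k_3,d-k_4\big)\\
\qquad{}+C_n\,\tilde{W}_{n-1}\big(x^2;a-k_1+1,b-k_2,c-k_3,d-k_4\big).
\end{gather*}
Both right-hand terms carry total shift $K-1$, so the induction hypothesis applies to the first at index $n$ and to the second at index $n-1$; the constraint $K\le n-1$ is exactly what is needed, since at index $n-1$ it reads $K-1\le(n-1)-1$. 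Substituting and collecting, the coefficient of $\tilde{W}_n\big(x^2;a,b,c,d\big)$ comes solely from the first expansion and the coefficient of $\tilde{W}_{n-K}\big(x^2;a,b,c,d\big)$ equals $C_n$ times the (nonzero) trailing coefficient of the second expansion, so both endpoints survive.

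The hard part will be verifying that every coefficient $C$ created along the recursion is nonzero, so that the product forming $a_{n,K}$ does not collapse. The key observation is that at each node of the induction exactly one coefficient is introduced, and it is introduced at the \emph{top} index $m$ of that subproblem, whose total shift $s'$ satisfies $s'\le m-1$. The numerator factors of $C$ have the shape ``(sum of two of the shifted parameters)$+m-1$'', with the shift deficit in those two parameters at most $s'-1\le m-2$; their real part is therefore at least $\operatorname{Re}(\cdot)+\operatorname{Re}(\cdot)+1>0$, using $\operatorname{Re}(a,b,c,d)>0$. Likewise the denominator factors $(2m+\Sigma-2)(2m+\Sigma-3)$, where $\Sigma$ is the sum of the four shifted parameters, have positive real part for these indices, so the coefficients are well-defined and nonzero throughout. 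This is precisely where the hypotheses $\operatorname{Re}(a,b,c,d)>0$ and $k_1+k_2+k_3+k_4\le n-1$ are indispensable. With $a_{n,0}a_{n,K}\neq0$ confirmed, Lemma~\ref{Brez} and Lemma~\ref{location} complete the argument.
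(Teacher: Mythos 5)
Your proposal is correct and takes essentially the same route as the paper's own proof: iterate the one-parameter shift relations \eqref{eqw1}--\eqref{eqw4} to express $\tilde{W}_n\big(x^2;a-k_1,b-k_2,c-k_3,d-k_4\big)$ as a linear combination of consecutive polynomials $\tilde{W}_{n-i}\big(x^2;a,b,c,d\big)$, then invoke Lemma~\ref{Brez} for the quasi-orthogonality and Lemma~\ref{location} for the zeros. If anything, you are more careful than the paper, which disposes of the multi-shift case with the phrase ``by iteration'' and never explicitly checks that the extreme coefficients $a_{n,0}$ and $a_{n,K}$ survive; your real-part analysis of the numerator and denominator factors under the constraints $\operatorname{Re}(a,b,c,d)>0$ and $k_1+k_2+k_3+k_4\le n-1$ closes that gap.
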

\begin{proof}Fix $a$ such that $0<\operatorname{Re}(a)<1$. From Lemma \ref{Brez} and \eqref{eqw1}, it follows that $\tilde{W}_n(x^2;a-1$, $b,c,d)$ is quasi-orthogonal of order one on $(0,\infty)$. By iteration, we can express $\tilde{W}_n\big(x^2;a-k$, $b,c,d\big)$ as a linear combination of $\tilde{W}_n\big(x^2;a,b,c,d\big),\tilde{W}_{n-1}\big(x^2;a,b,c,d\big),\dots,\tilde{W}_{n-k}\big(x^2;a,b,c,d\big)$ and from Lemma~\ref{Brez} it follows that $\tilde{W}_n\big(x^2;a-k,b,c,d\big)$, $0<\operatorname{Re}(a)<1$, is quasi-orthogonal of order $k\le n-1$ on $(0,\infty)$. Furthermore, from Lemma~\ref{location} we know that at least~$(n-k)$ real, distinct zeros of $\tilde{W}_n\big(x^2;a-k,b,c,d\big)$, $k\in\{1,2,\dots,n-1\}$, lie in $(0,\infty)$, i.e., at least $(n-k)$ of the zeros $(x_{n,1})^2, (x_{n,2})^2,\dots,(x_{n,n})^2$, lie in~$(0,\infty)$.

When we fix the parameter $b$, (or $c$, $d$) such that $0<\operatorname{Re}(b)<1$ (or $0<\operatorname{Re}(c)<1$, $0<\operatorname{Re}(d)<1$), we can prove in the same way, using \eqref{eqw2} (or \eqref{eqw3}, \eqref{eqw4}),
 that the polynomial $\tilde{W}_n\big(x^2;a,b-k,c,d\big)$ (alternatively $\tilde{W}_n\big(x^2;a,b,c-k,d\big)$, or $\tilde{W}_n\big(x^2;a,b,c,d-k\big)$) is quasi-orthogonal of order~$k$ on $(0,\infty)$. Using an iteration process, we can write $\tilde{W}_n\big(x^2;a-k_1$, $b-k_2,c-k_3,d-k_4\big)$ with $0<\operatorname{Re}(a)<1$ (if $k_1\ne 0$), $0<\operatorname{Re}(b)<1$ (if $k_2\ne 0$), $0<\operatorname{Re}(c)<1$ (if $k_3\ne 0$) and $0<\operatorname{Re}(d)<1$ (if $k_4\ne 0$), in the form of~\eqref{charac_quasi} and the results follow from Lemmas~\ref{Brez} and~\ref{location}.
\end{proof}

\begin{Theorem}\label{Thmawlocation+}Consider $a$, $b$, $c$, $d$, such that $\operatorname{Re}(b,c,d)>0$, $0<\operatorname{Re}(a)<1$ and non-real parameters occur in conjugate pairs. Let $x_{n,i}^2$, $i\in\{1,2,\dots,n\}$, denote the zeros of $\tilde{W}_n\big(x^2;a,b,c,d\big)$ and $y_{n,i}^2$, $i\in\{1,2,\dots,n\}$, the zeros of $\tilde{W}_n\big(x^2;a-1,b,c,d\big)$. Then
\begin{gather*} y_{n,1}^2<x_{n,1}^2<x_{n-1,1}^2<y_{n,2}^2<x_{n,2}^2<\cdots<x_{n-1,n-1}^2<y_{n,n}^2<x_{n,n}^2.\end{gather*}
\end{Theorem}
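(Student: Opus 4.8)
The plan is to recognize \eqref{eqw1} as an instance of the two-term decomposition $Q_{n,1}=P_n+a_nP_{n-1}$ that underlies Lemma~\ref{Joulak2}, working throughout in the variable $t=x^2$ on the interval $(0,\infty)$. First I would set $P_n(t)=\tilde{W}_n(t;a,b,c,d)$, which is monic and, since $\operatorname{Re}(a,b,c,d)>0$, orthogonal on $(0,\infty)$, so its zeros are exactly $x_{n,1}^2<\cdots<x_{n,n}^2$. Equation~\eqref{eqw1} then reads $\tilde{W}_n(t;a-1,b,c,d)=P_n(t)+a_nP_{n-1}(t)$ with
\begin{gather*}
a_n=\frac{n(c+d+n-1)(b+d+n-1)(b+c+n-1)}{(2n+a+b+c+d-2)(2n+a+b+c+d-3)},
\end{gather*}
and the zeros of this polynomial, which is quasi-orthogonal of order one by Theorem~\ref{ThmW+}, are the $y_{n,j}^2$.

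The decisive step is to show that $a_n>0$, after which the claimed chain is precisely the conclusion of Lemma~\ref{Joulak2}(ii) read in the variable $t=x^2$. To check the sign I would invoke the hypothesis that non-real parameters occur in conjugate pairs together with $0<\operatorname{Re}(a)<1$ and $\operatorname{Re}(b,c,d)>0$. Since $a$ is the (real) parameter being shifted, the complex parameters must pair off among $b$, $c$, $d$: if all three are real, each factor $b+c+n-1$, $b+d+n-1$, $c+d+n-1$ is a positive real; if, say, $c=\bar b$ with $d$ real, then $b+c+n-1=2\operatorname{Re}(b)+n-1>0$ while $(b+d+n-1)(c+d+n-1)=|b+d+n-1|^2>0$. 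In either case the numerator is positive. The sum $s=a+b+c+d$ is real and positive, so the denominator $(2n+s-2)(2n+s-3)$ is positive for $n\ge2$, giving $a_n>0$.

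Applying Lemma~\ref{Joulak2}(ii) with $P_n(t)=\tilde{W}_n(t;a,b,c,d)$ and $Q_{n,1}(t)=\tilde{W}_n(t;a-1,b,c,d)$ then yields
\begin{gather*}
y_{n,1}^2<x_{n,1}^2<x_{n-1,1}^2<y_{n,2}^2<x_{n,2}^2<\cdots<x_{n-1,n-1}^2<y_{n,n}^2<x_{n,n}^2,
\end{gather*}
as required. I expect the only genuine obstacle to be the positivity of $a_n$: the numerator and denominator must be confirmed to be real and of the correct sign across every admissible configuration of real and conjugate-complex parameters, and the small-$n$ behaviour of the factor $2n+s-3$ (harmless once $n\ge2$, the range in which the chain is non-trivial) should be noted. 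Everything else is a direct appeal to Lemmas~\ref{location} and~\ref{Joulak2}, mirroring the arguments used for the earlier families in this paper.
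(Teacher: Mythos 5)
Your proposal is correct and follows essentially the same route as the paper: read \eqref{eqw1} as $Q_{n,1}=P_n+a_nP_{n-1}$ in the variable $t=x^2$, verify $a_n>0$, and invoke Lemma~\ref{Joulak2}(ii). The only difference is that you spell out the positivity of $a_n$ (the conjugate-pair casework and the caveat about the factor $2n+a+b+c+d-3$ when $n=1$), which the paper simply asserts; this is a welcome, not a divergent, addition.
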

\begin{proof}From \eqref{eqw1}, we obtain
\begin{gather*} a_n={\frac {n( c+d+n-1)( b+d+n-1)( b+c+n-1) }{( 2n+a+b+c+d-2)( 2n+a+b+c+d-3) }},\end{gather*} which is positive and the interlacing result, as well as the position of $y_{n,n}^2$, follows from Lem\-ma~\ref{Joulak2}(ii).
\end{proof}

\subsection{The Racah polynomials}
 The Racah polynomials
\begin{gather*}\tilde{R}_n(\lambda(x);\alpha,\beta,\gamma,\delta)=\frac{(\alpha+1, \beta+\delta+1, \gamma+1)_n}{(n+\alpha+\beta+1)_n}\\
\hphantom{\tilde{R}_n(\lambda(x);\alpha,\beta,\gamma,\delta)=}{}
\times \hypergeom{4}{3}{-n,n+\alpha+\beta+1, -x,x+\gamma+\delta+1}{\alpha+1, \beta+\delta+1, \gamma+1}{1}, \end{gather*}
$n\in\{0,1,2,\ldots,N\}$, with $\lambda(x)=x(x+\gamma+\delta+1)$, are orthogonal on~$(0,N)$ with respect to the weight function
\begin{gather*}
w(x)=\frac{(\a+1)_x (\b+\delta+1)_x (\g+1)_x (\g+\delta+1)_x((\g+\delta+3)/2)_x}{(-\a+\g+\delta+1)_x(-b+\g+1)_x(\delta+1)_x ((\g+\delta+1)/2)_x}
\end{gather*}
if $\alpha+1=-N$ or $\beta+\delta+1=-N$ or $\gamma+1=-N$ with $N$ a nonnegative integer. Since shifting $\g$ or $\delta$ will change $\lambda(x)$, we will only consider shifts in $\a$ and $\b$.

\begin{Proposition}
\begin{subequations}
 \begin{gather}
\tilde{R}_n(\lambda(x);\alpha-1,\beta,\gamma,\delta) =\tilde{R}_n(\lambda(x);\alpha,\beta,\gamma,\delta) \nonumber\\
\hphantom{\tilde{R}_n(\lambda(x);\alpha-1,\beta,\gamma,\delta) =}{} -{\frac {
 \left( \beta+n \right) \left( \beta+\delta+n \right) \left( \gamma+
n \right) n }{\left( 2 n+\alpha+\beta
 \right) \left( 2 n+\alpha+\beta-1 \right) }}\tilde{R}_{n-1}(\lambda(x);\alpha,\beta,\gamma,\delta)\label{eqr1};\\
\tilde{R}_n(\lambda(x);\alpha,\beta-1,\gamma,\delta) =\tilde{R}_n(\lambda(x);\alpha,\beta,\gamma,\delta)\nonumber\\
\hphantom{\tilde{R}_n(\lambda(x);\alpha,\beta-1,\gamma,\delta) =}{} -{\frac { \left(
\alpha+n \right) \left( \alpha-\delta+n \right) \left( \gamma+n
 \right) n }{ \left( 2 n+\alpha+\beta
 \right) \left( 2 n+\alpha+\beta-1 \right) }}\tilde{R}_{n-1}(\lambda(x);\alpha,\beta,\gamma,\delta).\label{eqr2}
 \end{gather}
\end{subequations}
\end{Proposition}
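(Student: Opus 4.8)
The plan is to establish \eqref{eqr1} and \eqref{eqr2} as identities in the parameters, exploiting that the monic family $\{\tilde{R}_m(\lambda(x);\alpha,\beta,\gamma,\delta)\}_{m\ge 0}$ is a basis for the polynomials in $\lambda(x)=x(x+\gamma+\delta+1)$. Since $\tilde{R}_n(\lambda(x);\alpha-1,\beta,\gamma,\delta)$ is monic of degree $n$ in $\lambda(x)$, I would write
\[
\tilde{R}_n(\lambda(x);\alpha-1,\beta,\gamma,\delta)=\sum_{j=0}^{n}c_{n,j}\,\tilde{R}_{n-j}(\lambda(x);\alpha,\beta,\gamma,\delta),\qquad c_{n,0}=1,
\]
so that the whole content of \eqref{eqr1} is that $c_{n,j}=0$ for $j\ge 2$ together with the stated value of $c_{n,1}$.

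For the vanishing, I would first anchor the argument in a regime where both the $\alpha$-family and the $(\alpha-1)$-family are genuinely orthogonal on one and the same lattice. Choosing the truncation $\gamma=-N-1$ (so that $(\gamma+1)_x=(-N)_x$ forces the support to be $\{0,1,\dots,N\}$ regardless of $\alpha$) does this for every $N\ge n$. Writing $w_\alpha$ for the weight with parameter $\alpha$, the parameter $\alpha$ occurs only through $(\alpha+1)_x$ in the numerator and $(-\alpha+\gamma+\delta+1)_x$ in the denominator, whence
\[
\frac{w_\alpha(x)}{w_{\alpha-1}(x)}=\frac{(\alpha+x)(-\alpha+\gamma+\delta+1+x)}{\alpha(-\alpha+\gamma+\delta+1)}=\frac{\lambda(x)+\alpha(\gamma+\delta+1-\alpha)}{\alpha(\gamma+\delta+1-\alpha)},
\]
which is a polynomial of degree one in $\lambda(x)$. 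Writing $w_\alpha=\rho(\lambda)w_{\alpha-1}$ and pairing $\tilde{R}_n(\cdot;\alpha-1)$ with $\tilde{R}_{n-j}(\cdot;\alpha)$ against $w_\alpha$, the factor $\rho$ lifts the degree of $\tilde{R}_{n-j}(\cdot;\alpha)$ only to $n-j+1$; since $\tilde{R}_n(\cdot;\alpha-1)$ is orthogonal with respect to $w_{\alpha-1}$ to every polynomial of degree below $n$, the pairing vanishes as soon as $n-j+1<n$, i.e.\ for $j\ge 2$. Thus only $j=0,1$ survive. Both sides of the resulting three-point relation are rational in $(\alpha,\beta,\gamma,\delta)$ for fixed $n$, and the identity now holds on the infinitely many hyperplanes $\gamma=-N-1$, $N\ge n$ (with $\alpha,\beta,\delta$ free), so it persists for all parameters by rational continuation.

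To pin down $c_{n,1}$ I would evaluate the two-term relation at the lattice point $x=0$, where $\lambda(0)=0$ and the terminating ${}_4F_3$ collapses to its $k=0$ term, giving
\[
\tilde{R}_m(0;\alpha,\beta,\gamma,\delta)=\frac{(\alpha+1)_m(\beta+\delta+1)_m(\gamma+1)_m}{(m+\alpha+\beta+1)_m}.
\]
Then $c_{n,1}=\bigl[\tilde{R}_n(0;\alpha-1,\ldots)-\tilde{R}_n(0;\alpha,\ldots)\bigr]/\tilde{R}_{n-1}(0;\alpha,\ldots)$ is a pure Pochhammer computation: with $\tfrac{(\alpha)_n}{(\alpha+1)_n}=\tfrac{\alpha}{\alpha+n}$ and $\tfrac{(n+\alpha+\beta)_n}{(n+\alpha+\beta+1)_n}=\tfrac{n+\alpha+\beta}{2n+\alpha+\beta}$, the numerator difference simplifies to $\tilde{R}_n(0;\alpha,\ldots)\cdot\tfrac{-n(n+\beta)}{(\alpha+n)(n+\alpha+\beta)}$, while $\tilde{R}_n(0;\alpha,\ldots)/\tilde{R}_{n-1}(0;\alpha,\ldots)=(\alpha+n)(\beta+\delta+n)(\gamma+n)(n+\alpha+\beta)/[(2n+\alpha+\beta-1)(2n+\alpha+\beta)]$; the factors $(\alpha+n)$ and $(n+\alpha+\beta)$ cancel and leave exactly the coefficient in \eqref{eqr1}. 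For \eqref{eqr2} the argument is word-for-word the same after noting that $\beta$ enters $w$ only through $(\beta+\delta+1)_x$ and $(-\beta+\gamma+1)_x$, and that $(\beta+\delta+x)(\gamma+1-\beta+x)=\lambda(x)+(\beta+\delta)(\gamma+1-\beta)$ is again degree one in $\lambda(x)$; the same $x=0$ evaluation then returns the stated coefficient.

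The main obstacle is the structural (two-term) step, not the final arithmetic: shifting $\alpha$ or $\beta$ is precisely what destroys orthogonality, so one cannot apply orthogonality of the shifted family for arbitrary parameters. Anchoring at the $\gamma$-truncation, where both families share the lattice $\{0,\dots,N\}$, and only then passing to general parameters by rational continuation, is the part that requires care; once the relation is known to have only two terms, identifying $c_{n,1}$ by the $x=0$ evaluation is routine.
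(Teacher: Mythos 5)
Your proposal is correct, but it takes a genuinely different route from the paper. The paper never proves this Proposition by hand: the two identities \eqref{eqr1} and \eqref{eqr2} are produced by the authors' Zeilberger-type Maple procedure \texttt{MixRec} (equivalently, by the structure-relation scheme sketched in the Introduction), and that algorithmic derivation is what stands behind every proposition of this kind in the paper. You instead argue conceptually: since $\alpha$ enters the weight only through $(\alpha+1)_x$ and $(-\alpha+\gamma+\delta+1)_x$, the ratio $w_\alpha/w_{\alpha-1}=\bigl(\lambda(x)+\alpha(\gamma+\delta+1-\alpha)\bigr)/\bigl(\alpha(\gamma+\delta+1-\alpha)\bigr)$ is linear in $\lambda(x)$, so after anchoring at the truncation $\gamma=-N-1$ (which keeps both the $\alpha$- and the $(\alpha-1)$-family orthogonal on the same lattice $\{0,\dots,N\}$, because the truncation condition does not involve $\alpha$ or $\beta$) a degree count kills every Fourier coefficient $c_{n,j}$ with $j\ge 2$; rational continuation in $\gamma$ removes the truncation, and the $x=0$ evaluation, where the ${}_4F_3$ collapses to $1$, pins down $c_{n,1}$ --- I verified both Pochhammer computations and they reproduce the stated coefficients exactly, for \eqref{eqr2} as well. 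What your argument buys is a self-contained, human-checkable proof that also explains \emph{why} only two terms occur: the weight ratio being linear in $\lambda$ is precisely the order-one quasi-orthogonality mechanism that the whole paper revolves around. What the paper's approach buys is uniformity and automation: the same procedure generates the analogous identities for every family in the Askey and $q$-Askey schemes, with no family-specific anchoring trick, which matters because the shifted families are genuinely non-orthogonal and your continuation device must be re-engineered (choosing the right truncation parameter) case by case. One point you should make explicit: on each hyperplane $\gamma=-N-1$ your degree argument needs the norms $h_{n-j}$ to be nonzero and the lattice values $\lambda(0),\dots,\lambda(N)$ to be distinct, which holds only for generic $(\alpha,\beta,\delta)$; genericity on infinitely many hyperplanes is still enough for the rational continuation, but as written the phrase ``with $\alpha,\beta,\delta$ free'' glosses over this.
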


\begin{Theorem}\label{ThmW++}
 Let $k\in\{1,2,\dots,n-1\}$. The sequence of Racah polynomials
\begin{itemize}\itemsep=0pt
\item[$(i)$] $\big\{\tilde{R}_n(\lambda(x);\alpha-k,\beta,\gamma,\delta)\big\}_{n=0}^{N}$, with $\a=-N-1$, is quasi-orthogonal of order $k$ with respect to the weight $w(x)$ on $(0,\lambda(N))$ and the polynomials have at least $(n-k)$ real, distinct zeros in $(0,\lambda(N))$;
\item[$(ii)$] $\big\{\tilde{R}_n(\lambda(x);\alpha,\beta-k,\gamma,\delta)\big\}_{n=0}^{N}$, with $\b=-N-\delta-1$, is quasi-orthogonal of order $k$ with respect to the weight $w(x)$ on $(0,\lambda(N))$ and the polynomials have at least $(n-k)$ real, distinct zeros in $(0,\lambda(N))$.
\end{itemize}
\end{Theorem}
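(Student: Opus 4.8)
The plan is to reuse, essentially verbatim, the template that proves Theorem~\ref{ThmW} for the $q$-Racah family, since \eqref{eqr1} and \eqref{eqr2} are single-parameter-shift recurrences of exactly the same shape: each expresses a one-unit downward shift of a single parameter as $\tilde{R}_n$ plus a scalar multiple of $\tilde{R}_{n-1}$ at the unshifted parameters. Combined with the characterisation in Lemma~\ref{Brez} and the zero-location result in Lemma~\ref{location}, these two equations deliver the whole statement.

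For part~(i) I would fix $\alpha=-N-1$, so that $\alpha+1=-N$ and the base sequence $\{\tilde{R}_n(\lambda(x);\alpha,\beta,\gamma,\delta)\}_{n=0}^{N}$ is orthogonal with respect to $w(x)$ on $(0,\lambda(N))$. Equation~\eqref{eqr1} then exhibits $\tilde{R}_n(\lambda(x);\alpha-1,\beta,\gamma,\delta)$ in the form~\eqref{charac_quasi} with $k=1$, so Lemma~\ref{Brez} gives quasi-orthogonality of order one on $(0,\lambda(N))$. To reach order $k$, I would iterate: applying \eqref{eqr1} with $\alpha$ replaced successively by $\alpha-1,\alpha-2,\dots,\alpha-k+1$ and back-substituting writes $\tilde{R}_n(\lambda(x);\alpha-k,\beta,\gamma,\delta)$ as a linear combination $\sum_{i=0}^{k}a_{n,i}\tilde{R}_{n-i}(\lambda(x);\alpha,\beta,\gamma,\delta)$, whence Lemma~\ref{Brez} yields quasi-orthogonality of order $k$ and Lemma~\ref{location} places at least $(n-k)$ real, distinct zeros in $(0,\lambda(N))$. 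Part~(ii) is identical after setting $\beta=-N-\delta-1$ (so that $\beta+\delta+1=-N$) and using \eqref{eqr2} in place of \eqref{eqr1}.

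The only step needing genuine attention is verifying the nondegeneracy condition $a_{n,0}a_{n,k}\ne0$ required by Lemma~\ref{Brez}. Here $a_{n,0}=1$ by the monic normalisation, so it suffices to show $a_{n,k}\ne0$. Tracking the iteration, $a_{n,k}$ is a product of the single-shift coefficients collected along the way, each of the form $-\frac{m(\beta+m)(\beta+\delta+m)(\gamma+m)}{(2m+\alpha'+\beta)(2m+\alpha'+\beta-1)}$ for $m$ ranging over $\{n-k+1,\dots,n\}$ and $\alpha'$ over the successive shifted values $\alpha,\alpha-1,\dots$. I would check that, under the admissible Racah parameter ranges together with the standing choice $\alpha=-N-1$ and $n\le N$, none of the numerator factors vanishes and no denominator degenerates, so that the whole product is nonzero; since the shifts only move $\alpha$ downward while $n\le N$ keeps the arguments away from the lattice endpoints, this reduces to a routine nonvanishing check rather than a real obstacle, and it completes the proof.
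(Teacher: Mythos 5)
Your proposal follows essentially the same route as the paper's own proof: fix $\alpha=-N-1$ (resp.\ $\beta=-N-\delta-1$), use \eqref{eqr1} (resp.\ \eqref{eqr2}) together with Lemma~\ref{Brez} to obtain order-one quasi-orthogonality, iterate the single-parameter shift to write $\tilde{R}_n(\lambda(x);\alpha-k,\beta,\gamma,\delta)$ as a linear combination of $\tilde{R}_{n-i}(\lambda(x);\alpha,\beta,\gamma,\delta)$, $i\in\{0,1,\dots,k\}$, and conclude with Lemmas~\ref{Brez} and~\ref{location}. Your extra verification of the nondegeneracy condition $a_{n,0}a_{n,k}\neq0$ is a point the paper leaves implicit; it is a routine nonvanishing check under the stated parameter restrictions and does not alter the argument.
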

\begin{proof} (i) Let $\a=-N-1$. From Lemma~\ref{Brez} and~\eqref{eqr1}, it follows that $\tilde{R}_n(\lambda(x);\alpha-1,\beta,\gamma,\delta)$ is quasi-orthogonal of order one on $(0,\lambda(N))$. By iteration, we can express $\tilde{R}_n(\lambda(x);\alpha-k,\beta,\gamma,\delta)$ as a linear combination of $\tilde{R}_n(\lambda(x);\alpha,\beta,\gamma,\delta)$, $\tilde{R}_{n-1}(\lambda(x);\alpha,\beta,\gamma,\delta)$, $\dots$, $\tilde{R}_{n-k}(\lambda(x);\alpha,\beta,\gamma,\delta)$ and the result follows from Lemma~\ref{Brez}. Furthermore, from Lemma~\ref{location} we know that at least $(n-k)$ real, distinct zeros of $\tilde{R}_n(\lambda(x);\alpha-k,\beta,\gamma,\delta)$, $k\in\{1,2,\dots,n-1\}$, lie in $(0,\lambda(N))$.

(ii) Let $\b=-N-\delta-1$. The result follow in the same way from~\eqref{eqr2} and Lemmas~\ref{Brez} and~\ref{location}.
\end{proof}

As in the case of the $q$-Racah polynomials, we obtain different interlacing results for values of $n$ larger than $\frac{N}{2}+1$, that we show in the next theorem, than for $n<\frac{N}{2}+1$.
\begin{Theorem} Consider $n\le N$ and let $x_{n,i}$, $i\in\{1,2,\dots,n\}$ denote the zeros of $\tilde{R}_n(\lambda(x);\alpha,\beta$, $\gamma,\delta)$, $y_{n,i}$, $i\in\{1,2,\dots,n\}$, the zeros of $\tilde{R}_n(\lambda(x);\alpha-1,\beta,\gamma,\delta)$ and $z_{n,i}$, $i\in\{1,2,\dots,n\}$, the zeros of
$\tilde{R}_n(\lambda(x);\alpha,\beta-1,\gamma,\delta)$. Then, for $n>\frac{N}{2}+1$,
\begin{itemize}\itemsep=0pt
\item[$(i)$] if $\alpha=-N-1$ and $\beta>0$, $\delta>0$, $\gamma>0$, we have \begin{gather*} 0<x_{n,1}<y_{n,1}<x_{n-1,1}<x_{n,2}<y_{n,2}<\cdots<x_{n-1,n-1}<x_{n,n}<y_{n,n};\end{gather*}
\item[$(ii)$] if $\beta =-N-\delta-1$ and $\alpha>0$, $\gamma>0$, $\alpha-\delta>0$, we have
\begin{gather*} 0<x_{n,1}<z_{n,1}<x_{n-1,1}<x_{n,2}<z_{n,2}<\cdots<x_{n-1,n-1}<x_{n,n}<z_{n,n}.\end{gather*}
\end{itemize}
\end{Theorem}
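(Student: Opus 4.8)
The plan is to treat each shifted Racah polynomial as an order-one quasi-orthogonal polynomial of the form $Q_{n,1}=P_n+a_nP_{n-1}$ in the sense of \eqref{charac_quasi}, to determine the sign of the coefficient $a_n$, and then to read off the interlacing directly from Lemma~\ref{Joulak2}, exactly as was done for the $q$-Racah polynomials.

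First I would read off the coefficients from the two recurrence relations. From \eqref{eqr1}, the polynomial $\tilde{R}_n(\lambda(x);\alpha-1,\beta,\gamma,\delta)$ has the form \eqref{charac_quasi} with
\begin{gather*}
a_n=-\frac{(\beta+n)(\beta+\delta+n)(\gamma+n)n}{(2n+\alpha+\beta)(2n+\alpha+\beta-1)},
\end{gather*}
and \eqref{eqr2} supplies the analogous coefficient for the $\beta$-shift. The crux is to establish $a_n<0$. In case (i), the hypotheses $\beta,\gamma,\delta>0$ make every factor of the numerator positive; setting $\alpha=-N-1$, the denominator becomes $(2n-N-1+\beta)(2n-N-2+\beta)$, and the restriction $n>\frac{N}{2}+1$ forces $2n-N-1>1>0$, so both factors are positive and $a_n<0$. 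Case (ii) is entirely parallel: from \eqref{eqr2} the numerator factors $(\alpha+n)(\alpha-\delta+n)(\gamma+n)n$ are positive since $\alpha,\gamma>0$ and $\alpha-\delta>0$, while with $\beta=-N-\delta-1$ the denominator factors are $(2n-N-1)+(\alpha-\delta)$ and $(2n-N-2)+(\alpha-\delta)$, both positive because $2n-N-1>1$ and $\alpha-\delta>0$; hence $a_n<0$ again.

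With $a_n<0$ established, I would invoke Lemma~\ref{Joulak2}(i), which yields precisely the claimed chain
\begin{gather*}
x_{n,1}<y_{n,1}<x_{n-1,1}<\cdots<x_{n-1,n-1}<x_{n,n}<y_{n,n}
\end{gather*}
in case (i), and the analogous chain with $z$ in case (ii). It remains only to justify the leftmost inequality $0<x_{n,1}$: when $\alpha=-N-1$ we have $\alpha+1=-N$, so $\tilde{R}_n$ is genuinely orthogonal on $(0,\lambda(N))$ and all of its zeros lie in that interval; the same holds in case (ii), since $\beta=-N-\delta-1$ gives $\beta+\delta+1=-N$.

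The main obstacle, and the sole reason for the hypothesis $n>\frac{N}{2}+1$, is controlling the sign of the denominator $(2n+\alpha+\beta)(2n+\alpha+\beta-1)$. For smaller $n$ the factor $2n+\alpha+\beta-1$ (equal to $2n-N-2+\beta$ in case (i)) can become negative, reversing the sign of $a_n$; in that regime Lemma~\ref{Joulak2}(ii) rather than (i) would apply and the interlacing pattern would be the opposite one. The bound $n>\frac{N}{2}+1$ is exactly the threshold that keeps both denominator factors positive, so that $a_n<0$ and part (i) of Lemma~\ref{Joulak2} is the relevant case.
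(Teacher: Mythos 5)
Your proposal is correct and follows essentially the same route as the paper's own (very terse) proof: verify that the coefficient of $\tilde{R}_{n-1}$ in \eqref{eqr1} and \eqref{eqr2} is negative under the stated hypotheses and then apply Lemma~\ref{Joulak2}(i). Your write-up merely fills in the details the paper leaves implicit, namely the explicit sign analysis of numerator and denominator, the observation that $n>\frac{N}{2}+1$ is exactly what keeps both denominator factors $(2n+\alpha+\beta)$ and $(2n+\alpha+\beta-1)$ positive, and the justification of $0<x_{n,1}$ via orthogonality of the unshifted polynomial on $(0,\lambda(N))$.
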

\begin{proof} Under the above hypotheses,
 the coefficients of $\tilde{R}_n(\lambda(x);\alpha,\beta,\gamma,\delta)$ in \eqref{eqr1} and \eqref{eqr2} are negative and the interlacing results follow from Lemma \ref{Joulak2}(i).
 \end{proof}
\subsection{The continuous Hahn polynomials}
The continuous Hahn polynomials
\begin{gather*}\tilde{P}_n(x;a,b,c,d)=\frac{i^n(a+c,a+d)_n}{(n+a+b+c+d-1)_n}\, \hypergeom{3}{2}{-n,n+a+b+c+d-1,a+ix}{a+c,a+d}{1}\end{gather*}
are orthogonal on $\rr$ with respect to
\begin{gather*}
 w(x)=\Gamma(a+ix)\Gamma(b+ix)\Gamma(c-ix)\Gamma(d-ix)
 \end{gather*} for $\operatorname{Re}(a,b,c,d)>0$, $c=\bar{a}$ and $d=\bar{b}$.
\begin{Proposition}
\begin{subequations}
\begin{gather}
\tilde{P}_n(x;a-1,b,c,d) =\tilde{P}_n(x;a,b,c,d) \nonumber\\
 \qquad{} +{\frac {i \left( b+c+n-1 \right) \left( b+d+n-1 \right) n}{ \left( 2
 n+a+b+c+d-3 \right) \left( 2n+a+b+c+d-2 \right) }}\tilde{P}_{n-1}(x;a,b,c,d);\label{eqch1}\\
\tilde{P}_{n}(x;a,b-1,c,d) =\tilde{P}_{n}(x;a,b,c,d)\nonumber\\
\qquad{} +{\frac {i \left( a-1+d+ n \right) \left( a-1+c+n \right) n }{ \left( 2
n+a+b+c+d-3 \right) \left( 2n+a+b+c+d-2 \right) }}\tilde{P}_{n-1}(x;a,b,c,d);\label{eqch2}\\
\tilde{P}_{n}(x;a,b,c-1,d) =\tilde{P}_{n}(x;a,b,c,d)\nonumber\\
 \qquad{} -{\frac {i \left( b+d+n-1 \right) \left( a-1+d+n \right) n }{ \left( 2
n+a-3+b+c+d \right) \left( 2n+a-2+b+c+d \right) }}\tilde{P}_{n-1}(x;a,b,c,d);\label{eqch3}\\
\tilde{P}_{n}(x;a,b,c,d-1) =\tilde{P}_{n}(x;a,b,c,d)\nonumber\\
\qquad{} -{\frac {i \left( b+c+n-1 \right) \left( a-1+c+n \right) n }{ \left( 2
n+a-3+b+c+d \right) \left( 2n+a-2+b+c+d \right) }}\tilde{P}_{n-1}(x;a,b,c,d).\label{eqch4}
 \end{gather}
\end{subequations}
\end{Proposition}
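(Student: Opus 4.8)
The plan is to exploit the fact that, in the monic normalisation used here, $\tilde{P}_n(x;a,b,c,d)$ and each of its unit parameter shifts are monic of degree~$n$, so that their difference has strictly smaller degree, and then to identify this difference as a scalar multiple of $\tilde{P}_{n-1}(x;a,b,c,d)$ by a weight-ratio orthogonality argument. I would first prove~\eqref{eqch1} in this way and afterwards deduce \eqref{eqch2}, \eqref{eqch3} and~\eqref{eqch4} from it by exploiting the symmetries of the family.

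First I would check that the leading coefficient of $\tilde{P}_n(x;a,b,c,d)$ is independent of the parameters: the term $m=n$ in the displayed ${}_3F_2$ contributes $(-n)_n(a+ix)_n$, whose top power of~$x$ together with the prefactor $i^n$ produces $i^n\cdot(-1)^n i^n=1$. Hence $\tilde{P}_n(x;a-1,b,c,d)$ and $\tilde{P}_n(x;a,b,c,d)$ are both monic of degree~$n$, so that
\[\Delta_n(x):=\tilde{P}_n(x;a-1,b,c,d)-\tilde{P}_n(x;a,b,c,d)\]
is a polynomial of degree at most $n-1$.

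Next I would show that $\Delta_n$ is orthogonal to $1,x,\dots,x^{n-2}$ with respect to $w(x)=w(x;a,b,c,d)$. The key observation is the weight ratio
\[\frac{w(x;a,b,c,d)}{w(x;a-1,b,c,d)}=\frac{\Gamma(a+ix)}{\Gamma(a-1+ix)}=a-1+ix,\]
which is a polynomial of degree one in~$x$. Since $\tilde{P}_n(x;a,b,c,d)$ is already orthogonal to every polynomial of degree $\le n-1$, it suffices to treat the shifted term: for $m\le n-2$,
\begin{gather*}
\int_{\rr} x^m\,\tilde{P}_n(x;a-1,b,c,d)\,w(x;a,b,c,d)\,{\rm d}x\\
=\int_{\rr} x^m(a-1+ix)\,\tilde{P}_n(x;a-1,b,c,d)\,w(x;a-1,b,c,d)\,{\rm d}x=0,
\end{gather*}
because $x^m(a-1+ix)$ has degree $m+1\le n-1$ and $\tilde{P}_n(\cdot\,;a-1,b,c,d)$ is orthogonal to every such polynomial with respect to its own weight. (One argues for parameter values where all the weights define the continuous Hahn orthogonality functional and extends to general $a,b,c,d$ by analyticity, both sides of~\eqref{eqch1} being rational in the parameters.) Thus $\Delta_n$ has degree at most $n-1$ and is orthogonal, with respect to~$w$, to all polynomials of degree at most $n-2$, which forces $\Delta_n=a_n\tilde{P}_{n-1}(x;a,b,c,d)$ for a constant~$a_n$.

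It remains to pin down $a_n$ and to obtain the other three relations. I would evaluate $a_n$ by comparing the coefficients of $x^{n-1}$ on both sides, which amounts to extracting the subleading coefficient of the monic continuous Hahn polynomial from the $m=n$ and $m=n-1$ terms of the ${}_3F_2$; simplifying the resulting Pochhammer quotient should return the stated value. For~\eqref{eqch2} I would use the invariance of the weight, and hence of the monic polynomial, under $a\leftrightarrow b$, which turns the coefficient of~\eqref{eqch1} into that of~\eqref{eqch2}. For~\eqref{eqch3} and~\eqref{eqch4} I would combine the reflection $x\mapsto-x$ with the interchange of the conjugate pairs $(a,b)\leftrightarrow(c,d)$, using $\tilde{P}_n(x;c,d,a,b)=(-1)^n\tilde{P}_n(-x;a,b,c,d)$; this sends $a+ix$ to $c-ix$ and is exactly what produces the sign change from $+i$ to $-i$ in the coefficients. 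The main obstacle is this final, purely computational, bookkeeping of the $i$~factors and Pochhammer ratios so that the constant emerges precisely as claimed in each case; the degree-plus-orthogonality step itself is uniform across the four shifts, since each weight ratio $w(x;\text{original})/w(x;\text{shifted})$ is again linear in~$x$. Alternatively, one may bypass the coefficient computation by feeding the hypergeometric summand of $\tilde{P}_n(x;a-1,b,c,d)$ into the \texttt{MixRec}-type application of Zeilberger's algorithm described in the introduction, which returns the two-term mixed recurrence and thereby confirms~$a_n$ directly.
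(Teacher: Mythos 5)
Your argument is correct, but it proves the Proposition by a genuinely different route from the paper. The paper supplies no analytic proof of these four relations at all: they are instances of the general machinery of the Introduction (the structure relation \eqref{struc_rel} combined with the lowering property \eqref{qdiff}), and in practice they are generated by the Zeilberger-type Maple procedure \texttt{MixRec} and posted in the accompanying Maple file; your closing remark correctly identifies that as the alternative. Your proof is instead the classical uniqueness argument: monicity makes $\Delta_n$ of degree at most $n-1$; the fact that the weight ratio $w(x;a,b,c,d)/w(x;a-1,b,c,d)=a-1+ix$ is a polynomial of degree one makes $\Delta_n$ orthogonal to all polynomials of degree at most $n-2$, hence proportional to $\tilde{P}_{n-1}(x;a,b,c,d)$; the constant follows by comparing $x^{n-1}$ coefficients; and the symmetries $\tilde{P}_n(x;a,b,c,d)=\tilde{P}_n(x;b,a,c,d)$ and $\tilde{P}_n(x;c,d,a,b)=(-1)^n\tilde{P}_n(-x;a,b,c,d)$ transport \eqref{eqch1} into \eqref{eqch2}, \eqref{eqch3} and \eqref{eqch4}. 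I checked these steps: both symmetries hold (by uniqueness of the monic orthogonal polynomial for the symmetrized, resp.\ reflected, weight) and reproduce the stated coefficients, the minus signs in \eqref{eqch3} and \eqref{eqch4} coming from $(-1)^n\tilde{P}_{n-1}(-x;c,d,a,b)=-\tilde{P}_{n-1}(x;a,b,c,d)$ rather than literally from the substitution $a+ix\mapsto c-ix$; and the deferred subleading-coefficient computation does close, since the coefficient of $x^{n-1}$ in $\tilde{P}_n(x;a,b,c,d)$ equals $-i\bigl(na+\tbinom{n}{2}\bigr)+in(a+c+n-1)(a+d+n-1)/(2n+a+b+c+d-2)$, whose difference under $a\mapsto a-1$ simplifies exactly to the factor in \eqref{eqch1}. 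What your approach buys is a conceptual explanation of \emph{why} an order-one relation must exist (the weight ratio has degree one) and a hand-verifiable proof; what the paper's approach buys is automation and uniformity of the explicit coefficients across every family in the ($q$-)Askey scheme, with no case-by-case symmetry bookkeeping.

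One point in your write-up deserves emphasis rather than a parenthesis: shifting $a$ alone destroys the conjugation conditions $c=\bar a$, $d=\bar b$, so there is \emph{no} parameter region in which $w(\cdot;a,b,c,d)$ and $w(\cdot;a-1,b,c,d)$ are simultaneously positive continuous Hahn weights. Your analytic-continuation remark is therefore the crux, not a technicality: what one actually uses is the non-Hermitian orthogonality $\int_{\rr}x^m\tilde{P}_n(x;a,b,c,d)w(x;a,b,c,d)\,{\rm d}x=0$ for $m<n$, valid for all parameters with $\operatorname{Re}(a,b,c,d)>0$ (Barnes-type integrals), together with quasi-definiteness of the functional so that the step from ``degree $\le n-1$ and orthogonal to degree $\le n-2$'' to ``proportional to $\tilde{P}_{n-1}$'' is legitimate; since both sides of \eqref{eqch1} are rational in the parameters, proving the identity for $\operatorname{Re}(a)>1$ indeed suffices. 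With that reading your proof is complete and correct.
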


\begin{Corollary}
\begin{subequations}
 \begin{gather}
 \tilde{P}_n(x;a-1,b,c-1,d) \nonumber\\
\quad {} =\tilde{P}_n(x)-{\frac {i \left( a+d-b-c \right) \left( b+d+n-1 \right) n}{ \left( 2n+a-4+b+c+d \right) \left( 2n+a-2+b+c+d
 \right) }}\tilde{P}_{n-1}(x)\label{eqch5}\\
 \quad {} +{\frac { \left( b+d+n-2 \right) \left( a-2+d+n \right)
 \left( n-1 \right) \left( b+c-2+n
 \right) \left( b+d+n-1 \right) n}{ \left( 2n-5+a+b+c+d \right)
 \left( 2n+a-4+b+c+d \right) ^{2} \left( 2n+a-3+b+c+d \right) }}\tilde{P}_{n-2}(x);\nonumber\\
 \tilde{P}_{n}(x;a,b-1,c,d-1) \nonumber\\
 \quad{} {}=\tilde{P}_{n}(x) +{\frac {i
 \left( a+d-b-c \right) \left( a-1+c+n \right) n }{ \left( 2n-4+a+b+c+d \right) \left( 2n+a-2+b+c+d
 \right) }}\tilde{P}_{n-1}(x)\label{eqch6}\\
\quad {} +{\frac {n \left( a+d+n-2 \right) \left( a-1+c+n \right)
 \left( b+c+n-2 \right) \left( a+c+n-2 \right) \left( n-1 \right) }{ \left( 2n-5+a+b+c+d \right) \left( 2
n-4+a+b+c+d \right) ^{2} \left( 2n+a-3+b+c+d \right) }}\tilde{P}_{n-2}(x).\nonumber
 \end{gather}
\end{subequations}
\end{Corollary}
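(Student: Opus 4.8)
The plan is to derive the two double-shift relations \eqref{eqch5} and \eqref{eqch6} by composing the single-parameter shift relations of the preceding Proposition, in the same spirit in which \eqref{eq6bqj} was obtained from \eqref{eq1bqj} and \eqref{eq3bqj}. Because shifting $a$ to $a-1$ and shifting $c$ to $c-1$ act on disjoint parameters, these operations commute, and I am free to perform them in whichever order keeps the bookkeeping simplest.

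For \eqref{eqch5} I would first take \eqref{eqch1}, the relation for the shift $a\mapsto a-1$, and substitute $c\mapsto c-1$ everywhere (in the arguments and in the coefficient). This expresses $\tilde{P}_n(x;a-1,b,c-1,d)$ through $\tilde{P}_n(x;a,b,c-1,d)$ and $\tilde{P}_{n-1}(x;a,b,c-1,d)$, the latter carrying a coefficient $A$ read off from \eqref{eqch1} under the same substitution. I would then invoke \eqref{eqch3} to write $\tilde{P}_n(x;a,b,c-1,d)=\tilde{P}_n(x)+B\,\tilde{P}_{n-1}(x)$, and \eqref{eqch3} with $n$ replaced by $n-1$ to write $\tilde{P}_{n-1}(x;a,b,c-1,d)=\tilde{P}_{n-1}(x)+B'\,\tilde{P}_{n-2}(x)$. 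Substituting and regrouping gives $\tilde{P}_n(x;a-1,b,c-1,d)=\tilde{P}_n(x)+(A+B)\,\tilde{P}_{n-1}(x)+AB'\,\tilde{P}_{n-2}(x)$, so the task reduces to identifying $A+B$ and $AB'$ with the coefficients displayed in \eqref{eqch5}.

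The product $AB'$, the coefficient of $\tilde{P}_{n-2}(x)$, matches \eqref{eqch5} immediately once the two factors of $i$ are multiplied. The only genuine computation is the coefficient $A+B$ of $\tilde{P}_{n-1}(x)$, where two separately unwieldy fractions must collapse to the compact form carrying the factor $a+d-b-c$. Writing $D=2n+a+b+c+d$ and extracting the common factor $\frac{i\,n(b+d+n-1)}{D-3}$ from $A+B$, the remaining bracket has numerator $(b+c+n-2)(D-2)-(a+d+n-1)(D-4)$; since this is linear in $D$ and the two linear factors obey $(b+c+n-2)+(a+d+n-1)=D-3$, a short expansion yields
\begin{gather*}
(b+c+n-2)(D-2)-(a+d+n-1)(D-4)=-(a+d-b-c)(D-3).
\end{gather*}
The factor $D-3$ cancels, leaving precisely the coefficient in \eqref{eqch5}. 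This cancellation is the crux, and I expect it to be the only real obstacle. Relation \eqref{eqch6} is obtained in exactly the same way by composing the shifts $b\mapsto b-1$ and $d\mapsto d-1$ through \eqref{eqch2} and \eqref{eqch4}; the analogous simplification, with the two linear factors again summing to $D-3$, once more produces the factor $a+d-b-c$.
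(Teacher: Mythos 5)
Your proposal is correct and follows essentially the same route as the paper: the paper proves its analogous big $q$-Jacobi corollary (equation \eqref{eq6bqj}) by exactly this composition of single-shift relations, and the continuous Hahn corollary is stated as following in the same manner from \eqref{eqch1}--\eqref{eqch4}. Your verification of the key cancellation, namely that $(b+c+n-2)(D-2)-(a+d+n-1)(D-4)=-(a+d-b-c)(D-3)$ with $D=2n+a+b+c+d$ (and its mirror image for \eqref{eqch6}), checks out and correctly reproduces both displayed coefficients.
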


\begin{Theorem}\label{ThmW+++}
Consider $a$, $b$, $c$, $d$ such that $\operatorname{Re}(a,b,c,d)>0$, $c=\bar{a}$ and $d=\bar{b}$. Consider $k_1,k_2,k_3,k_4\in\{0,1,\dots,n-1\}$, such that $k_1+k_2+k_3+k_4\le n-1$. The sequence of continuous Hahn polynomials $\{\tilde{P}_n(x;a-k_1,b-k_2,c-k_3,d-k_4)\}_{n=0}^{\infty}$, with $0<\operatorname{Re}(a)=\operatorname{Re}(c)<1$ $($if $k_1\ne 0)$, $0<\operatorname{Re}(b)=\operatorname{Re}(d)<1$ $($if $k_2\ne 0)$, $0<\operatorname{Re}(a)=\operatorname{Re}(c)<1$ $($if $k_3\ne 0)$ and $0<\operatorname{Re}(b)=\operatorname{Re}(d)<1$ $($if $k_4\ne 0)$, is quasi-orthogonal of order $k_1+k_2+k_3+k_4\le n-1$ with respect to the weight $w(x)$ on~$\rr$ and the polynomials have at least $n-(k_1+k_2+k_3+k_4)$ real, distinct zeros.
\end{Theorem}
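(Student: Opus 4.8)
The plan is to mirror the proof of Theorem~\ref{ThmW+} for the Wilson polynomials, replacing the one-parameter shift relations \eqref{eqw1}--\eqref{eqw4} by the analogous continuous Hahn relations \eqref{eqch1}--\eqref{eqch4}, and to reduce the combined four-parameter shift to an iterated application of these together with the characterisation in Lemma~\ref{Brez}. First I would fix $a$ with $0<\operatorname{Re}(a)=\operatorname{Re}(c)<1$ and read \eqref{eqch1} as the statement that $\tilde{P}_n(x;a-1,b,c,d)=\tilde{P}_n(x;a,b,c,d)+a_{n,1}\tilde{P}_{n-1}(x;a,b,c,d)$ with $a_{n,1}\ne0$; since $\tilde{P}_n(x;a,b,c,d)$ is orthogonal with respect to $w$ on $\rr$, Lemma~\ref{Brez} gives quasi-orthogonality of order one and Lemma~\ref{location} the location of the zeros. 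Iterating \eqref{eqch1}, substituting $a\mapsto a-j$ at the $j$-th step (which is legitimate because the relation is an identity in all four parameters), expresses $\tilde{P}_n(x;a-k_1,b,c,d)$ as $\sum_{i=0}^{k_1}a_{n,i}\tilde{P}_{n-i}(x;a,b,c,d)$, so it is quasi-orthogonal of order $k_1$. The same argument applied to \eqref{eqch2}, \eqref{eqch3} and \eqref{eqch4} handles shifts in $b$, $c$ and $d$ respectively.

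Next I would combine the four shifts. Applying the single-parameter relations one parameter at a time, each time substituting the already decremented values of the remaining parameters, I would write $\tilde{P}_n(x;a-k_1,b-k_2,c-k_3,d-k_4)$ as a linear combination $\sum_{i=0}^{k_1+k_2+k_3+k_4}a_{n,i}\tilde{P}_{n-i}(x;a,b,c,d)$ of the form \eqref{charac_quasi}. Lemma~\ref{Brez} then yields quasi-orthogonality of order $k_1+k_2+k_3+k_4$ with respect to $w$ on $\rr$, and Lemma~\ref{location} gives at least $n-(k_1+k_2+k_3+k_4)$ real, distinct zeros. Because $c=\bar a$ and $d=\bar b$, the conjugacy-preserving paired decrements $(a,c)\mapsto(a-1,c-1)$ and $(b,d)\mapsto(b-1,d-1)$ are exactly the combined relations \eqref{eqch5} and \eqref{eqch6}; since decreasing both $a$ and $c$ by $1$ keeps $c-1=\overline{a-1}$, using these paired shifts preserves the real-coefficient structure of the decremented polynomial, which is what makes the sign-change argument underlying Lemma~\ref{location}, and hence the count of real zeros, meaningful.

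The step I expect to be the main obstacle is verifying that the leading coefficient $a_{n,k_1+k_2+k_3+k_4}$ of the resulting combination does not vanish, which is precisely the condition $a_{n,0}a_{n,k}\ne0$ demanded by Lemma~\ref{Brez}. This reduces to checking that none of the numerator factors generated by iterating \eqref{eqch1}--\eqref{eqch4}, namely factors of the shape $(b+c+n-j)$, $(b+d+n-j)$, $(a-1+c+n-j)$ and the integer factors $n,n-1,\dots$, vanishes for $n>k_1+k_2+k_3+k_4$ under the hypotheses $\operatorname{Re}(a,b,c,d)>0$ together with the stated bounds on the real parts; the restrictions $0<\operatorname{Re}(a)<1$ and $0<\operatorname{Re}(b)<1$ are exactly what push the decremented parameters out of the region of orthogonality, ensuring that the order is \emph{exactly} $k_1+k_2+k_3+k_4$ rather than smaller. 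I would also confirm that the denominators $(2n+a+b+c+d-3)(2n+a+b+c+d-2)$ and their shifted analogues remain nonzero, which is immediate since $a+b+c+d=2\operatorname{Re}(a)+2\operatorname{Re}(b)>0$.
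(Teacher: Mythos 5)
Your proposal is correct and follows essentially the same route as the paper's own proof: single-parameter shift relations \eqref{eqch1}--\eqref{eqch4} combined with Lemma~\ref{Brez} for order-one quasi-orthogonality, then iteration over the four parameters to reach order $k_1+k_2+k_3+k_4$, with Lemma~\ref{location} supplying the zero count. Your additional remarks on the conjugate-pair shifts \eqref{eqch5}--\eqref{eqch6} and on verifying the nonvanishing of the extreme coefficient $a_{n,k}$ are sensible refinements of details the paper leaves implicit, but they do not change the argument.
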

\begin{proof}

Fix $a$ and $c$ such that $0<\operatorname{Re}(a)=\operatorname{Re}(c)<1$. From Lemma~\ref{Brez} and~\eqref{eqch1}, it follows that $\tilde{P}_n(x;a-1,b,c,d)$ is quasi-orthogonal of order one on $\rr$. By iteration, we can express $\tilde{P}_n(x;a-k,b,c,d)$ as a linear combination of $\tilde{P}_n(x;a,b,c,d),\tilde{P}_{n-1}(x;a,b,c,d),\dots,\tilde{P}_{n-k}(x;a,b$, $c,d)$ and it follows from Lemma~\ref{Brez} that $\tilde{P}_n(x;a-1,b,c,d)$ is quasi-orthogonal of order one on~$\rr$. By using an iteration process, we can write $\tilde{P}_n(x;a-k,b,c,d)$ as a linear combination of ortho\-go\-nal continuous Hahn polynomials and it is quasi-orthogonal of order $k\le n-1$. Furthermore, from Lemma~\ref{location} we know that at least~$(n-k)$ zeros of $\tilde{P}_n(x;a-k,b,c,d)$, $k\in\{1,2,\dots,n-1\}$, are real and distinct. In the same way, using~\eqref{eqch3}, we can prove that $\tilde{P}_n(x;a,b,c-k,d)$, $0<\operatorname{Re}(a)=\operatorname{Re}(c)<1$ is quasi-orthogonal of $k\le n-1$ on~$\rr$. By fixing~$b$ and~$d$ such that $0<\operatorname{Re}(b)=\operatorname{Re}(d)<1$, we can prove the quasi-orthogonality of $\tilde{P}_n(x;a,b-k,c,d)$ and $\tilde{P}_n(x;a,b,c,d-k)$, using \eqref{eqch2}, \eqref{eqch4} and Lemma~\ref{Brez}.

 Using an iteration process, we can write $\tilde{P}_n(x;a-k_1,b-k_2,c-k_3,d-k_4)$, with $0<\operatorname{Re}(a)=\operatorname{Re}(c)<1$ (if $k_1\ne 0$), $0<\operatorname{Re}(b)=\operatorname{Re}(d)<1$ (if $k_2\ne 0$), $0<\operatorname{Re}(a)=\operatorname{Re}(c)<1$ (if $k_3\ne 0$) and $0<\operatorname{Re}(b)=\operatorname{Re}(d)<1$ (if $k_4\ne 0$), as a linear combination of orthogonal continuos Hahn polynomials and the results follow from Lemmas~\ref{Brez} and~\ref{location}.
\end{proof}

\begin{Theorem}\label{ThmW++++}
Consider $a,b,c,d$ such that $\operatorname{Re}(a,b,c,d)>0$, $c=\bar{a}$ and $d=\bar{b}$.
\begin{itemize}\itemsep=0pt
\item[$(i)$] Let $0<\operatorname{Re}(a)= \operatorname{Re}(c)<1$. Then $(n-2)$ zeros of $\tilde{P}_n(x;a-1,b,c-1,d)$ interlace with the zeros of $\tilde{P}_{n-1}(x;a,b,c,d)$;
\item[$(ii)$] Let $0<\operatorname{Re}(b)= \operatorname{Re}(d)<1$. Then $(n-2)$ zeros of $\tilde{P}_n(x;a,b-1,c,d-1)$ interlace with the zeros of $\tilde{P}_{n-1}(x;a,b,c,d)$.
\end{itemize}
\end{Theorem}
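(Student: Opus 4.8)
The plan is to recognise both polynomials in the statement as order-two quasi-orthogonal polynomials and then to invoke the interlacing criterion of \cite[Theorem~15]{Joulak_2005}, exactly as was done for the little $q$-Jacobi and Al-Salam--Carlitz~I families in the Remark following Theorem~\ref{Thmqh4+} and in Theorem~\ref{Thmlql+}(ii). For part~(i), equation~\eqref{eqch5} already exhibits $\tilde{P}_n(x;a-1,b,c-1,d)$ in the form $\tilde{P}_n(x)+a_n\tilde{P}_{n-1}(x)+b_n\tilde{P}_{n-2}(x)$, so by Lemma~\ref{Brez} (equivalently, by Theorem~\ref{ThmW+++} with $k_1=k_3=1$) it is quasi-orthogonal of order two on $\rr$, and Lemma~\ref{location} guarantees at least $n-2$ real, distinct zeros. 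Part~(ii) is the mirror image, with \eqref{eqch6} in place of \eqref{eqch5}.

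First I would read off from \eqref{eqch5} the coefficient $b_n$ of $\tilde{P}_{n-2}(x)$, and from the three-term recurrence of the monic continuous Hahn polynomials \cite[Section~9.4]{KLS} the quantity $C_n>0$ defined by writing that recurrence as $\tilde{P}_n(x)=(x-B_n)\tilde{P}_{n-1}(x)-C_n\tilde{P}_{n-2}(x)$. Next I would use the conjugacy $c=\bar a$, $d=\bar b$ to verify that these coefficients are real: the sums $a+c=2\Re(a)$ and $b+d=2\Re(b)$ are real, while the genuinely complex factors pair into squared moduli, for instance $(a+d+n-2)(b+c+n-2)=|a+\bar b+n-2|^2\ge 0$, so every factor of $b_n$ and of $C_n$ is either real or combines with its conjugate. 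With $0<\Re(a)=\Re(c)<1$ and $\Re(b)=\Re(d)>0$ the numerator of $b_n$ is then positive, and for the relevant range of $n$ so are the denominators.

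The decisive step is to compute $b_n-C_n$ and to show that its sign is constant throughout the admissible parameter region. As in the two model cases cited above, I expect $b_n-C_n>0$ under the hypothesis $\Re(a)<1$, which is precisely the condition forcing the shifted parameter out of the orthogonality range; the interlacing of the $n-2$ real zeros of $\tilde{P}_n(x;a-1,b,c-1,d)$ with the $n-1$ zeros of $\tilde{P}_{n-1}(x;a,b,c,d)$ then follows from the corresponding case of \cite[Theorem~15]{Joulak_2005}. Part~(ii) is handled identically: one reads $b_n$ from \eqref{eqch6}, forms $b_n-C_n$, and applies the same theorem, now under $0<\Re(b)=\Re(d)<1$.

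The main obstacle is exactly this sign determination. Although each individual factor becomes manifestly real once the conjugate pairs are grouped, the rational function $b_n-C_n$ carries several sign-indefinite factors of the form $2\Re(a)+2\Re(b)+2n-m$ in its denominator, and one must check both that these neither vanish nor change sign for the values of $n$ at issue and that the constraint $\Re(a)<1$ (respectively $\Re(b)<1$) is the feature that pins down the overall sign. Establishing this cleanly, rather than relying solely on the analogy with the earlier families, is the only genuinely delicate point; the remainder is the routine algebraic simplification of $b_n-C_n$.
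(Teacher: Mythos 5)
Your proposal follows the paper's proof exactly: the paper likewise reads $b_n$ off \eqref{eqch5} (resp.\ \eqref{eqch6}), takes $-C_n$ to be the coefficient of $\tilde{P}_{n-2}(x;a,b,c,d)$ in the three-term recurrence \cite[equation~(9.4.3)]{KLS}, computes $C_n-b_n$ in closed form, observes that under $0<\operatorname{Re}(a)=\operatorname{Re}(c)<1$ the single negative factor $a+c-2=2(\operatorname{Re}(a)-1)$ forces $C_n<b_n$ (the conjugate factors pairing into positive quantities exactly as you describe), and then invokes \cite[Theorem~15(ii)]{Joulak_2005}. The only difference is that the paper carries out the sign computation you flag as the remaining delicate step, so you have identified the paper's argument rather than an alternative to it.
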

\begin{proof}In this proof $-C_n$ refers to the coefficient of $\tilde{P}_{n-2}(x;a,b,c,d)$ in the three-term recurrence equation of the continuous Hahn polynomials (cf.~\cite[equation~(9.4.3)]{KLS}), involving the polynomials $\tilde{P}_{n}(x;a,b,c,d)$, $\tilde{P}_{n-1}(x;a,b,c,d)$ and $\tilde{P}_{n-2}(x;a,b,c,d)$.

(i) Let $0<\operatorname{Re}(a), \operatorname{Re}(c)<1$. We consider the coefficient $b_n$ of $\tilde{P}_{n-2}(x;a,b,c,d)$ in \eqref{eqch5}. Then
\begin{gather*} C_n-b_n={\frac { \left( a+c-2 \right) \left( n-2+b+d \right) \left( a+d+n-2
 \right) \left( n-1 \right) \left( b+c+n-2 \right) }{ \left( 2n-5+
a+b+c+d \right) \left( 2n-4+a+b+c+d \right) ^{2}}},\end{gather*} and when we take into consideration the specific restrictions on the parameters, we observe that $C_n<b_n$ and the result follows from \cite[Theorem~15(ii)]{Joulak_2005}.

(ii) Let $0<\operatorname{Re}(b), \operatorname{Re}(d)<1$ and let $b_n$ be the coefficient of $\tilde{P}_{n-2}(x;a,b,c,d)$ in \eqref{eqch6}. Then
\begin{gather*} C_n-b_n={\frac { \left( b+d-2 \right) \left( n-1 \right) \left( b+c+n-2
 \right) \left( a+d+n-2 \right) \left( a+c+n-2 \right) }{ \left( 2 n-5+a+b+c+d \right) \left( 2n-4+a+b+c+d \right) ^{2}}}<0,\end{gather*} when we take into consideration the specific restrictions on the parameters and the result follows from \cite[Theorem~15(ii)]{Joulak_2005}.
\end{proof}

\section{Concluding remarks}\label{section5}
The $q$-Meixner polynomials, defined by
\begin{gather*}\tilde{M}_n(\bar{x};\beta,\gamma;q)=(-1)^nq^{-n^2}\gamma^n(\beta q;q)_n\,\qhypergeom{2}{1}{q^{-n},\bar{x}}{\beta q}{q;-{q^{n+1}\over \gamma}},\end{gather*}
 with $\bar{x}=q^{-x}$, are orthogonal with respect to the discrete weight $\frac{(\b q;q)_x \g^x q^{\binom{x}{2}}}{(q,-\b \g q;q)_x}$, when $0\leq \beta q<1$, $\gamma>0$, $\bar{x}\in(1,\infty)$, and satisfy
\begin{gather*} \tilde{M}_n\left(\bar{x};\frac{\beta}{q},\gamma;q\right) ={\frac { \left( {q}^{n}x+\beta
 \gamma \right) }{(\beta \gamma+x){q}^{n}}}\tilde{M}_n(\bar{x};\beta,\gamma;q)-
{\frac { \b \g \left( {q}^{n}+\gamma \right) \left( {q}^{n}-1 \right)q}{(\beta \gamma+x){q}^{3 n}}}\tilde{M}_n(\bar{x};\beta,\gamma;q).
\end{gather*}
The polynomial $\tilde{M}_n\big(\bar{x};\frac{\beta}{q^k},\gamma;q\big)$, $k<n$, is not quasi-orthogonal with respect to $\frac{(\b q;q)_x \g^x q^{\binom{x}{2}}}{(q,-\b \g q;q)_x}$, on~$(1,\infty)$, since it cannot be written as a linear combination of the polynomials $\tilde{M}_n(\bar{x};\beta,\gamma;q)$, $\tilde{M}_{n-1}(\bar{x};\beta,\gamma;q), \dots,\tilde{M}_{n-k}(\bar{x};\beta,\gamma;q)$. Since $\gamma>0$, we also have $\frac{\gamma}{q}>0$ or $\gamma q>0$ and the sequences $\tilde{M}_n\big(\bar{x};\beta,\frac{\gamma}{q};q\big)$ or $\tilde{M}_n(\bar{x};\beta,\gamma q;q)$, are orthogonal on $(1,\infty)$ for $0\leq \beta q<1$. We therefore do not consider $q$-shifts of~$\gamma$.

The same is true for the Al-Salam--Carlitz II polynomials \cite[Section 14.25]{KLS}, that satisfy
\begin{gather*} \tilde{V}_{n}^{(\frac {\alpha}{q})}(x;q) =-{\frac {( {q}^{n}x-\alpha )}{(\alpha-x)q^n}}\tilde{V}_{n}^{(\a)}(x;q)-{\frac {
\a {q} \left( {q}^{n}-1 \right)}{(\alpha-x){q}^{2n}}}\tilde{V}_{n-1}^{(\a)}(x;q)\end{gather*} and the Bessel polynomials \cite[Section~9.13]{KLS}, that satisfy the equation
\begin{gather*}
\tilde{y}_{n}(x;\alpha+1) ={\frac {{\alpha}^{2}x+4\alpha
nx+4{n}^{2}x+\alpha x+2nx-2n
}{x( \alpha+1+2n) ( \alpha+2n ) }}\tilde{y}_{n}(x;\alpha)\\
\hphantom{\tilde{y}_{n}(x;\alpha+1) =}{} +{
\frac {4n( \alpha+n) }{x( \alpha+1+2) ( \alpha+2n-1) (\alpha+2n) ^2}}\tilde{y}_{n-1}(x;\alpha).
\end{gather*}

The $q$-Krawtchouk polynomials
\begin{gather*}\tilde{K}_n(\bar{x};p,N;q)=\frac{(q^{-N};q)_n}{(-pq^n;q)_n}\,\qhypergeom{3}{2}{q^{-n},\bar{x},-pq^n}{q^{-N},0}{q;q},\end{gather*}
 with $ \bar{x}=q^{-x}$ and $n\in\{0,1,\ldots,N\}$, are orthogonal for $p>0$ with respect to the discrete weight $w(x)=\frac{(q^{-N};q)_x(-p)^x}{(q;q)_x}$ on $\big(1,q^{-N}\big)$. The polynomials $\tilde{K}_n\big(\bar{x};\frac{p}{q^k},N;q\big)$ are orthogonal for $p>0$ with respect to $\frac{(q^{-N};q)_x(\frac{-p}{q^k})^x}{(q;q)_x}$ on $\big(1,q^{-N}\big)$. By iterating the equation
\begin{gather*} K_ n\left(\bar{x};\frac{p}{q},N;q\right) =K_ n(\bar{x};p,N;q) -{\frac {p
 \left( {q}^{n}-1 \right) \left( {q}^{n}-{q}^{N+1} \right){q}^{n+1} }{ \left( {q}^{2n}p+q \right) \left( {q}^{2n}p+{q}^{2} \right)q^N }} K_ {n-1}(\bar{x};p,N;q),\end{gather*}
we can write $\tilde{K}_n(\bar{x};\frac{p}{q^k},N;q)$ as a linear combination of the polynomials $\tilde{K}_{n-j}(\bar{x};{p},N;q)$, $j\in\{0,1,\dots,k\}$, and the polynomials $\tilde{K}_n\big(\bar{x};\frac{p}{q^k},N;q\big)$ are also quasi-orthogonal for $p>0$ on $\big(1,q^{-N}\big)$ with respect to $w(x)$.

The same is true for the $q$-Charlier \cite[Section~14.23]{KLS} and alternative $q$-Charlier (or $q$-Bessel) polynomials \cite[Section~14.22]{KLS}.

\subsection*{Acknowledgments}
The authors thank the referees for the valuable comments and suggestions which considerably improved the manuscript. This work has been supported by the Institute of Mathematics of the University of Kassel (Germany) for D.D.~Tcheutia.

\pdfbookmark[1]{References}{ref}
\LastPageEnding

\end{document}